%
%
%

\documentclass[reqno]{amsart}


\usepackage{amsmath}
\usepackage{amsfonts}
\usepackage{amssymb,enumerate}
\usepackage{amsthm}
\usepackage[all]{xy}
\usepackage{rotating}
\usepackage{hyperref}


\theoremstyle{plain}
\newtheorem{lem}{Lemma}[section]
\newtheorem{cor}[lem]{Corollary}
\newtheorem{prop}[lem]{Proposition}
\newtheorem{thm}[lem]{Theorem}
\newtheorem*{theorem*}{Theorem}

\newtheorem*{intthm}{Main Theorem}

\theoremstyle{definition}
\newtheorem{defn}[lem]{Definition}
\newtheorem{ex}[lem]{Example}

\newtheorem{disc}[lem]{Remark}

\newtheorem{notn}[lem]{Notation}
\newtheorem{fact}[lem]{Fact}

\newtheorem*{convention}{Convention}




\newcommand{\catd}{\mathcal{D}}




\newcommand{\lotimes}{\otimes^{\mathbf{L}}}
\newcommand{\HH}{\operatorname{H}}

\newcommand{\s}{\mathfrak{S}}

\newcommand{\im}{\operatorname{Im}}
\newcommand{\shift}{\mathsf{\Sigma}}



\newcommand{\ideal}[1]{\mathfrak{#1}}
\newcommand{\m}{\ideal{m}}




\newcommand{\bbz}{\mathbb{Z}}


\newcommand{\xra}{\xrightarrow}
\newcommand{\xla}{\xleftarrow}





\renewcommand{\geq}{\geqslant}
\renewcommand{\leq}{\leqslant}


\newcommand{\Ext}[4][R]{\operatorname{Ext}_{#1}^{#2}(#3,#4)}	
\newcommand{\Rhom}[3][R]{\mathbf{R}\!\operatorname{Hom}_{#1}(#2,#3)}	

\newcommand{\Otimes}[3][R]{#2\otimes_{#1}#3}
\newcommand{\Hom}[3][R]{\operatorname{Hom}_{#1}(#2,#3)}	
\newcommand{\Tor}[4][R]{\operatorname{Tor}^{#1}_{#2}(#3,#4)}

\newcommand{\und}[1]{#1^{\natural}}
\newcommand{\mult}[2]{\mu^{#1,#2}}
\newcommand{\HomA}[2]{\operatorname{Hom}_{A}(#1,#2)}

\newcommand{\ul}{\underline}

\numberwithin{equation}{lem}

\begin{document}

\bibliographystyle{amsplain}

\title{Liftings and Quasi-Liftings of DG modules}

\dedicatory{To Yuji Yoshino}

\author{Saeed Nasseh}
\author{Sean Sather-Wagstaff}



\thanks{Sather-Wagstaff  was supported in part by North Dakota EPSCoR, 
National Science Foundation Grant EPS-0814442,
and  a grant from the NSA}

\address{Department of Mathematics,
North Dakota State University Dept \# 2750,
PO Box 6050,
Fargo, ND 58108-6050
USA}

\email{saeed.nasseh@ndsu.edu}
\urladdr{https://www.ndsu.edu/pubweb/~{}nasseh/}

\email{sean.sather-wagstaff@ndsu.edu}
\urladdr{http://www.ndsu.edu/pubweb/\~{}ssatherw/}



\keywords{DG algebras, DG modules, liftings, quasi-liftings, semidualizing complexes}
\subjclass[2000]{13C05, 13C60, 13D02, 13D07, 13J10}

\begin{abstract}
We prove  lifting results for DG modules that are akin to 
Auslander, Ding, and Solberg's famous lifting results for modules.
\end{abstract}

\maketitle


\section*{Introduction} \label{sec0}

\begin{convention}\label{conv110211a}
Throughout this paper, let $R$ be a commutative noetherian
ring.
\end{convention}

Hochster famously wrote that ``life is really worth living'' in a Cohen-Macaulay 
ring~\cite{hochster:safc}.\footnote{We know of this quote from~\cite{bruns:cmr}.}
For instance, if $R$ is  Cohen-Macaulay and local with maximal regular
sequence $\ul t$, then $R/(\ul t)$ is artinian and the natural
epimorphism $R\to R/(\ul t)$ is  nice enough  to allow for transfer of
properties between the two rings. Thus, if one can prove a result for artinian
local rings, then one can (often) prove a similar result for Cohen-Macaulay local
rings by showing that the desired conclusion descends from $R/(\ul t)$ to $R$.
When $R$ is complete, then this is aided sometimes by the
lifting result of Auslander, Ding, and 
Solberg~\cite[Propositions 1.7 and 2.6]{auslander:lawlom}.

\begin{theorem*}
Let  $\ul t\in R$ be an 
$R$-regular sequence, and let $M$ be a finitely generated
$R/(\ul t)$-module. Assume that $R$ is local and $(\ul t)$-adically complete.
\begin{enumerate}[\rm(a)]
\item If $\Ext[R/(\ul t)] 2MM=0$, then $M$ is ``liftable'' to $R$, that is, 
there is a finitely generated $R$-module $N$ such that
$\Otimes{R/(\ul t)}{N}\cong M$ and $\Tor i{R/(\ul t)}{N}=0$ for all $i\geq 1$.
\item If $\Ext[R/(\ul t)] 1MM=0$, then $M$ has at most one lift to $R$.
\end{enumerate}
\end{theorem*}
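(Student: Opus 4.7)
The plan is to prove both statements by induction on the length $c$ of the regular sequence $\ul t = t_1, \ldots, t_c$, reducing everything to the case $c = 1$ of a single $R$-regular element $t$. For the inductive step, one checks that $R/(t_1)$ is $(t_2, \ldots, t_c)$-adically complete (immediate from $(\ul t)$-adic completeness of $R$), that $t_2, \ldots, t_c$ remains $R/(t_1)$-regular, and that the Ext-vanishing hypothesis on $M$ persists after each successive single-element lift. This last point can be verified by chasing the change-of-rings long exact sequences coming from $0 \to N \xra{t_j} N \to N/t_j N \to 0$ for each intermediate lift $N$, together with Rees-type identifications of Ext over the quotient with Ext over the subring.

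For $c = 1$, set $R_n := R/(t^{n+1})$, so that $R_0 = R/(t)$ and $R$ is the inverse limit of the $R_n$ by completeness. The plan for part (a) is to build a compatible inverse system of successive lifts: finitely generated $R_n$-modules $M_n$ with $M_0 = M$, surjections $M_n \twoheadrightarrow M_{n-1}$ inducing $M_n \otimes_{R_n} R_{n-1} \cong M_{n-1}$, and $\Tor[R_n]{i}{M_n}{R_{n-1}} = 0$ for $i \geq 1$. From the short exact sequence
\[
0 \to (t^n)/(t^{n+1}) \to R_n \to R_{n-1} \to 0,
\]
whose kernel is isomorphic to $R/(t)$ as an $R/(t)$-module (because $t$ is $R$-regular), deformation-theoretic obstruction theory places the obstruction to lifting $M_{n-1}$ to such an $M_n$ in $\Ext[R/(t)]{2}{M}{M}$. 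Under the hypothesis this class vanishes at every stage, so the tower exists. Setting $N := \varprojlim_n M_n$, the $(t)$-adic completeness of $R$ together with the Mittag-Leffler condition supplied by the surjections forces $N$ to be finitely generated with $N/tN \cong M$; the compatibility condition on the tower is equivalent via the Koszul complex on $t$ to $\Tor[R]{i}{N}{R/(t)} = 0$ for $i \geq 1$, as desired.

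For part (b), the plan is parallel. Given two lifts $N, N'$ and the induced isomorphism $\bar\varphi : N/tN \xra{\cong} N'/tN'$, one lifts $\bar\varphi$ inductively through the tower to compatible isomorphisms $\varphi_n : N/t^{n+1}N \xra{\cong} N'/t^{n+1}N'$; the obstruction to extending $\varphi_{n-1}$ to $\varphi_n$ now sits in $\Ext[R/(t)]{1}{M}{M}$, which vanishes by hypothesis, and the inverse limit yields the desired isomorphism $N \cong N'$. The main obstacle throughout is the concrete identification of the deformation obstructions at the $n$th stage with classes in $\Ext[R/(t)]{i}{M}{M}$: this requires carefully unraveling the change-of-rings formulas and exploiting the regularity of $t$ to identify the principal ideal $(t^n)/(t^{n+1})$ with $R/(t)$ in a manner compatible with the obstruction cocycle constructions. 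This identification is the technical heart of the Auslander--Ding--Solberg argument and is where most of the work lies.
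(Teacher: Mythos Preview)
The paper does not actually prove this statement: it is quoted in the introduction as a known result of Auslander, Ding, and Solberg, with a citation to their paper, and serves only as motivation for the DG analogue that constitutes the paper's Main Theorem. So there is no ``paper's own proof'' to compare against directly.

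That said, your outline is the classical deformation-theoretic argument and is essentially correct. A couple of points deserve more care than you give them. First, in the inductive reduction on the length of $\ul t$, you need that the relevant $\ext^i$ vanishing over $R/(\ul t)$ implies the same vanishing over the intermediate quotient $R/(t_1,\ldots,t_{c-1})$ for the lifted module; this requires the Rees change-of-rings isomorphism $\Ext[R/(t)]{i}{M}{M}\cong\Ext[R]{i}{M}{M}$ for a non-zerodivisor $t$ on both $R$ and the lift, together with the short exact sequence coming from multiplication by $t$. Second, the passage from the tower $(M_n)$ to $N=\varprojlim M_n$ being finitely generated with $N/tN\cong M$ needs the hypothesis that $R$ is $(t)$-adically complete in an essential way (e.g.\ via a presentation lifted through the tower), and the identification of $\Tor[R]{i}{N}{R/(t)}$ with the compatibility condition is precisely the statement that $t$ is $N$-regular.

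It is worth noting that the paper's proof of its DG analogue takes a rather different tack: rather than building modules over the successive thickenings $R/(t^{n+1})$, it works directly with a fixed semi-free resolution and iteratively modifies the differential by null-homotoping explicit degree $-2$ cycles (using $\ext^2=0$), pushing the ``off-diagonal'' term of the differential into higher and higher powers of $t$ and then passing to the limit. Your inverse-system approach and the paper's differential-modification approach are two standard incarnations of the same obstruction calculus.
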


In this paper, we are concerned with what happens when 
the sequence $\ul t$ is not $R$-regular. One would like a similar mechanism for reducing
questions about arbitrary local rings to the artinian case. 

It is well known that the map $R\to R/(\ul t)$ is not nice enough in general
to guarantee good descent/lifting behavior. Our perspective\footnote{This
perspective is not original to our work. We learned of it from
Avramov and Iyengar.}
in this matter is
that this is not the right map to consider in general:
the correct one is the natural map from $R$ to the Koszul complex $K=K^R(\ul t)$.
This perspective requires one to make some adjustments.
For instance, $K$ is a differential graded $R$-algebra, so not a commutative
ring in the traditional sense.
This may cause some consternation, but the payoff
can be handsome. For instance, in~\cite{nasseh:lrfsdc}
we use this perspective to answer a 
question of Vasconcelos~\cite{vasconcelos:dtmc}. One of the tools for 
the proof of this result is the following version of Auslander, Ding,
and Solberg's lifting result. Note that we do not assume that
$R$ is local in part~\eqref{item120131a} of this result.

\begin{intthm}
Let $\underline{t}=t_1,\cdots,t_n$ be a sequence of elements of $R$, and
assume that $R$ is $\underline tR$-adically  complete.
Let $D$ be a
DG $K^R(\ul t)$-module that is homologically bounded below and homologically degreewise finite. 
\begin{enumerate}[\rm(a)]
\item\label{item120131a}
If $\Ext[K^R(\ul t)]{2}DD=0$,
then $D$ is quasi-liftable to $R$, that is,
there is a semi-free  $R$-complex $D'$ such that
$D\simeq K^R(\ul t)\otimes_RD'$.
\item\label{item120131b}
Assume that $R$ is local.
If $D$ is quasi-liftable
to $R$ and $\Ext[K^R(\ul t)]{1}DD=0$, then 
any two 
homologically degreewise finite quasi-lifts of $D$ to $R$ are quasiisomorphic
over $R$.
\end{enumerate}
\end{intthm}

This result is proved in Corollaries~\ref{cor110513a} and~\ref{cor110516a},
which
follow from more general results on liftings along morphisms of DG algebras.
Note that it is similar to, but quite different from, some results of Yoshino~\cite{yoshino}.

We briefly describe the contents of the paper.
Section~\ref{sec110211a} contains some background material on 
DG algebras and DG modules.
Section~\ref{DG structures}
contains some structural results for DG modules and homomorphisms between
them.
Finally, Section~\ref{lifting}
is where we prove our Main Theorem.

\section{DG Modules}
\label{sec110211a}

We assume that the reader is  familiar with the category of $R$-complexes.
For clarity, we include a few definitions.

\begin{defn}
  \label{cx}
In this paper, complexes of $R$-modules (``$R$-complexes'' for short) are indexed homologically:
\begin{equation*}
M = \cdots \xra{\partial_{n+2}^{M}} M_{n+1} \xra{\partial_{n+1}^{M}} M_n
\xra{\partial_{n}^{M}} M_{n-1} \xra{\partial_{n-1}^{M}} \cdots.
\end{equation*}
The degree of an element $m\in M$ is denoted $|m|$.
The \emph{tensor product} of two $R$-complexes $M,N$ is denoted $\Otimes MN$,
and the \emph{Hom complex} is denoted $\Hom MN$.
A \emph{chain map} $M\to N$ is a cycle of degree 0 in $\Hom MN$.
An $R$-complex $M$ is 
\emph{homologically bounded below} if $\HH_i(M)=0$ for $i\gg 0$; it is 
\emph{bounded below} if $M_i=0$ for $i\gg 0$.
\end{defn}

Next, we begin our background material on DG algebras; see~\cite{apassov:hddgr,avramov:ifr,avramov:dgha}.

\begin{defn}
  \label{DGK}
  A \emph{commutative differential graded $R$-algebra} (\emph{DG $R$-algebra} for short)
  is an $R$-complex $A$ equipped with a
  chain map
  $\mu^A\colon A\otimes_RA\to A$ with $ab:=\mu^A(a\otimes b)$
  that is:
  \begin{description}
  \item[associative] for all $a,b,c\in A$ we have $(ab)c=a(bc)$;
  \item[unital] there is an element $1\in A_0$ such that for all $a\in A$ we have $1a=a$;
  \item[graded commutative] for all $a,b\in A$ we have $ab = (-1)^{|a||b|}ba$ and $a^2=0$ when
      $|a|$ is odd; and
  \item[positively graded] $A_i=0$ for $i<0$.
\end{description}
The map $\mu^A$ is the \emph{product} on $A$.
Given a DG $R$-algebra $A$, the \emph{underlying algebra} is the
graded commutative  $R$-algebra
$\und{A}=\oplus_{i=0}^\infty A_i$.

A \emph{morphism} of DG $R$-algebras is a chain map
$f\colon A\to B$ between DG $R$-algebras  respecting products and multiplicative identities:
$f(aa')=f(a)f(a')$ and $f(1)=1$.
\end{defn}

\begin{ex}\label{ex110216a}
The ring $R$, considered as a complex concentrated in degree 0, is a DG $R$-algebra.
Given a DG $R$-algebra $A$, the map $R\to A$ given by $r\mapsto r\cdot 1$ is a morphism of DG $R$-algebras.
\end{ex}

\begin{fact}\label{fact110216a}
Let $A$ be a DG $R$-algebra.
The fact that the product on $A$ is a chain map
says that $\partial^A$ satisfies the \emph{Leibniz rule}:
$$
\partial^A_{|a|+|b|}(ab)=\partial^A_{|a|}(a)b+(-1)^{|a|}a\partial^A_{|b|}(b).
$$
It is straightforward to show that the $R$-module $A_0$ is an $R$-algebra.
Moreover, the natural map $A_0\to A$ is a morphism of DG $R$-algebras.
The condition $A_{-1}=0$ implies that $A_0$ surjects onto $\HH_0(A)$
and that $\HH_0(A)$ is an $A_0$-algebra. Furthermore,
the $R$-module $A_i$ is an $A_0$-module, and $\HH_i(A)$ is an $\HH_0(A)$-module
for each $i$.

Given a second DG $R$-algebra $K$, the tensor product
$\Otimes KA$ is also a DG $R$-algebra with multiplication
$(x\otimes a)(x'\otimes a'):=(-1)^{|a||x'|}(xx')\otimes(aa')$.
\end{fact}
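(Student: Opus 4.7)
This is a collection of routine verifications, so my plan is to handle each claim in turn by unraveling the relevant definitions. For the Leibniz rule, I would apply the chain map condition for $\mu^A\colon \Otimes AA\to A$ to the element $a\otimes b$, using the standard formula $\partial^{\Otimes AA}(a\otimes b)=\partial^A(a)\otimes b + (-1)^{|a|}a\otimes\partial^A(b)$ on a tensor product of complexes; the identity $\partial^A\mu^A=\mu^A\partial^{\Otimes AA}$ then gives exactly the stated Leibniz identity.

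For the statements concerning $A_0$, $A_i$, $\HH_0(A)$, and $\HH_i(A)$, the key observation is that $A_{-1}=0$. Since $\mu^A$ has degree $0$, it restricts to an $R$-bilinear map $\Otimes{A_0}{A_0}\to A_0$, and the DG algebra axioms for $A$ pass directly to make $A_0$ a commutative associative unital $R$-algebra. Moreover, because $A_{-1}=0$, every element of $A_0$ is a cycle, so $\HH_0(A)=A_0/\partial^A(A_1)$, which immediately supplies the surjection $A_0\onto\HH_0(A)$. To see that $\partial^A(A_1)$ is an ideal of $A_0$ (so that $\HH_0(A)$ inherits an $A_0$-algebra structure), I would use the Leibniz rule together with $\partial^A(a)=0$ for $a\in A_0$ to get $a\partial^A(x)=\partial^A(ax)$ for $x\in A_1$. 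The inclusion $A_0\hookrightarrow A$ is visibly a morphism of DG $R$-algebras: it respects products and units, and it commutes with differentials because the differential on $A_0$ is trivial. The $A_0$-module structure on $A_i$ comes from $\mu^A$ by the same degree reasoning, and the Leibniz rule again shows it descends to make $\HH_i(A)$ an $\HH_0(A)$-module.

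Finally, for the DG $R$-algebra structure on $\Otimes KA$, I would first verify that the given formula $(x\otimes a)(x'\otimes a'):=(-1)^{|a||x'|}(xx')\otimes(aa')$ defines an associative, unital, $R$-bilinear product on $\Otimes KA$, with unit $1\otimes 1$; all of this follows from the corresponding properties of $K$ and $A$ by direct computation. The step I expect to be the most finicky is graded commutativity, where one must expand both sides of $(x\otimes a)(x'\otimes a')=(-1)^{(|x|+|a|)(|x'|+|a'|)}(x'\otimes a')(x\otimes a)$ using the definition and apply graded commutativity in $K$ and $A$, carefully tracking Koszul signs to see that the exponents agree modulo $2$. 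The Leibniz rule for $\Otimes KA$ is then verified by a similar sign computation, using the tensor product differential on $\Otimes KA$. All the signs work out by the standard Koszul conventions.
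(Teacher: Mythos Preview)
Your proposal is correct; the paper itself offers no proof of this Fact, treating all of the claims as routine (it even says ``It is straightforward to show''). Your outline supplies precisely the expected verifications, and each step is sound.
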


\begin{defn}\label{defn110216a}
Let $A$ be a DG $R$-algebra. We say that $A$ is \emph{noetherian}
if $\HH_0(A)$ is noetherian and the $\HH_0(A)$-module $\HH_i(A)$ is  finitely generated for all $i\geq 0$.
When $(R,\m)$ is local, we say that $A$ is  \emph{local} if it is noetherian and
the ring $\HH_0(A)$ is a local $R$-algebra\footnote{This means that
$\HH_0(A)$ is a local  ring whose maximal ideal contains the ideal $\m\HH_0(A)$.}
with maximal ideal  $\m_{\HH_0(A)}$
\end{defn}

\begin{fact}\label{fact110211a}
Assume that $(R,\m)$ is local, and let $A$ be a local DG $R$-algebra.
The composition $A\to \HH_0(A)\to \HH_0(A)/\m_{\HH_0(A)}$
is a surjective morphism of DG $R$-algebras with kernel of the form
$\m_A=\cdots \xra{\partial_2^A} A_1 \xra{\partial_1^A} \mathfrak m_0\to 0$
for some maximal ideal $\mathfrak m_0\subsetneq A_0$.
The quotient complex $A/\m_A$ is $A$-isomorphic to $\HH_0(A)/\m_{\HH_0(A)}$.
Since $\HH_0(A)$ is a local $R$-algebra, we have $\m A_0\subseteq\m_0$.
\end{fact}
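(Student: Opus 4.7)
The plan is to unpack each assertion in turn; since the statement is essentially a bookkeeping exercise about ideals in a local DG algebra, I do not anticipate any substantive difficulties. First I would note that both constituent maps of the composition are surjective morphisms of DG $R$-algebras: the projection $A \to \HH_0(A)$ is one by Fact~\ref{fact110216a} (with the target concentrated in degree $0$), and $\HH_0(A) \to \HH_0(A)/\m_{\HH_0(A)}$ is a surjection of rings concentrated in degree $0$, which is automatically a DG algebra morphism between DG $R$-algebras living in degree $0$. Hence the composition is a surjective morphism of DG $R$-algebras.

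Next, to describe the kernel I would let $\pi\colon A_0 \onto \HH_0(A)$ denote the canonical surjection and set $\m_0 := \pi^{-1}(\m_{\HH_0(A)}) \subseteq A_0$. That $\m_0$ is maximal follows from the induced isomorphism $A_0/\m_0 \cong \HH_0(A)/\m_{\HH_0(A)}$, whose right-hand side is a field by hypothesis. In each degree $i \geq 1$ the composition is zero (the target vanishes), so the kernel there is all of $A_i$; in degree $0$ it is $\m_0$ by construction. This yields the complex $\m_A$ exactly as stated. To confirm $\m_A$ is a DG ideal, I would check stability under $\partial^A$, which reduces to $\partial_1^A(A_1) \subseteq \m_0$ and holds because $\partial_1^A(A_1) \subseteq \ker\pi \subseteq \m_0$; stability under the $A$-action is obvious in positive degrees and in degree $0$ amounts to $\m_0$ being an $A_0$-ideal.

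For the quotient identification, $(A/\m_A)_i$ vanishes for $i \geq 1$ and equals $A_0/\m_0 \cong \HH_0(A)/\m_{\HH_0(A)}$ in degree $0$; the $A$-action on both sides factors through $A \to A/\m_A$, so the isomorphism is $A$-linear. Finally, the inclusion $\m A_0 \subseteq \m_0$ follows directly from the footnote's convention: the hypothesis $\m\HH_0(A) \subseteq \m_{\HH_0(A)}$ yields $\pi(\m A_0) = \m\HH_0(A) \subseteq \m_{\HH_0(A)}$, whence $\m A_0 \subseteq \pi^{-1}(\m_{\HH_0(A)}) = \m_0$. The only step needing even a moment's care is the stability of $\m_A$ under $\partial^A$, and that reduces immediately to the defining property of $\HH_0(A)$, so the proof is really just careful bookkeeping.
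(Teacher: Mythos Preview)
Your proposal is correct. The paper states this as a Fact without proof, so there is no argument to compare against; your careful unpacking of the kernel via $\m_0 := \pi^{-1}(\m_{\HH_0(A)})$ and the verification that $\partial_1^A(A_1) = \ker\pi \subseteq \m_0$ is exactly the routine check the authors are leaving to the reader. One minor remark: Fact~\ref{fact110216a} does not literally assert that the projection $A \to \HH_0(A)$ is a morphism of DG $R$-algebras (it only records the surjection $A_0 \twoheadrightarrow \HH_0(A)$), so you should state this explicitly rather than cite it---but the verification is immediate, since the map is zero in positive degrees and the multiplication on $\HH_0(A)$ is induced from $A_0$.
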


\begin{defn}\label{defn110216b}
If $R$ is local and $A$ is a local DG $R$-algebra, then the subcomplex $\m_A$ is the \emph{augmentation ideal} of $A$.
\end{defn}

The following example is key for this investigation.

\begin{ex}
Given a sequence $\ul t=t_1,\cdots,t_n\in R$,
the Koszul complex $K=K^R(\ul t)$ is a  DG $R$-algebra with product given by the wedge product.
If $(R,\m)$ is local and $\ul t\in\m$, then $K$ is a  local DG $R$-algebra
with augmentation ideal
$\m_K=(0\to R\to\cdots\to R^n\to\m\to 0)$.
\end{ex}

\begin{defn}
  \label{DGK2}
Let $A$ be a DG $R$-algebra. A 
\emph{DG $A$-module} is an $R$-complex $M$ with a
chain map
$\mu^M\colon \Otimes AM\to M$ such that the rule $am:=\mu^M(a\otimes m)$
is associative and unital.
The map $\mu^M$ is the \emph{scalar multiplication} on $M$.
A \emph{morphism} of DG $A$-modules is a chain map
$f\colon M\to N$ between DG $A$-modules that respects scalar multiplication:
$f(am)=af(m)$.
Isomorphisms in the category of DG $A$-modules are identified by the
symbol $\cong$. Quasiisomorphisms in the category of DG $A$-modules are identified by the symbol $\simeq$;
these are the morphisms that induce bijections on all homology modules.
Two DG $A$-modules $M$ and $N$ are \emph{quasiisomorphic}, writen $M\simeq N$
if there is a finite sequence of quasiisomorphisms  $M\xra\simeq\cdots\xla{\simeq}N$.
\end{defn}

\begin{ex}\label{ex110218a}
Consider the ring $R$ as a DG $R$-algebra.
A DG $R$-module is just an $R$-complex, and a morphism of DG $R$-modules is simply a chain map.
\end{ex}

\begin{fact}\label{fact110218a}
Let $A$ be a DG $R$-algebra, and let $M$ be a DG $A$-module.
The fact that the scalar multiplication on $M$ is a chain map
says that $\partial^M$ satisfies the \emph{Leibniz rule}:
$\partial^A_{|a|+|m|}(am)=\partial^A_{|a|}(a)m+(-1)^{|a|}a\partial^M_{|m|}(m)$.
The $R$-module $M_i$ is an $A_0$-module, and $\HH_i(M)$ is an $\HH_0(A)$-module
for each $i$.
\end{fact}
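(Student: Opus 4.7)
The plan is to unpack the definitions directly; there is no deep content here, only bookkeeping with signs and degrees. First I would establish the Leibniz rule. Recall from Definition~\ref{cx} (applied to the complex $\Otimes AM$) that the differential on the tensor product is given on elementary tensors by
\begin{equation*}
\partial^{\Otimes AM}_{|a|+|m|}(a\otimes m) = \partial^A_{|a|}(a)\otimes m + (-1)^{|a|}\, a\otimes \partial^M_{|m|}(m).
\end{equation*}
The hypothesis that $\mu^M$ is a chain map says exactly $\partial^M\circ\mu^M = \mu^M\circ\partial^{\Otimes AM}$. Applying this to $a\otimes m$ and using $\mu^M(a\otimes m)=am$ yields the Leibniz rule as stated.

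Next I would verify that $M_i$ is an $A_0$-module. Since $\mu^M$ is graded of degree $0$, its restriction to $A_0\otimes_R M_i$ lands in $M_i$, giving a well-defined $R$-bilinear pairing $A_0\times M_i\to M_i$. The associativity and unitality axioms for $\mu^M$, specialized to arguments in $A_0$ and $M_i$, are precisely the axioms for an $A_0$-module structure on $M_i$; there is nothing more to check.

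The third assertion, that $\HH_i(M)$ is an $\HH_0(A)$-module, is the only step that uses the Leibniz rule nontrivially. I would define the action by
\begin{equation*}
[a]\cdot[m] := [am]
\end{equation*}
for $a\in A_0$ with $[a]\in\HH_0(A)$ and $m\in M_i$ a cycle, and then verify: (i) $am$ is itself a cycle; and (ii) the class $[am]$ is independent of the representatives. For (i), note that $A_{-1}=0$ forces $\partial^A_0(a)=0$, so the Leibniz rule collapses to $\partial^M_i(am)=a\partial^M_i(m)=0$. For (ii), if $a=\partial^A_1(a')$ with $a'\in A_1$ and $m$ is a cycle, then Leibniz gives
\begin{equation*}
\partial^M_{i+1}(a'm)=\partial^A_1(a')m-a'\partial^M_i(m)=am,
\end{equation*}
so $am$ is a boundary; symmetrically, if $m=\partial^M_{i+1}(m')$ and $a\in A_0$ (automatically a cycle), then $\partial^M_{i+1}(am')=\partial^A_0(a)m'+a\partial^M_{i+1}(m')=am$, so again $am$ is a boundary. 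The associativity and unit axioms for this action on $\HH_i(M)$ are inherited from the corresponding axioms for the $A_0$-action on $M_i$, passed to homology.

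I expect no real obstacle: the only subtle point is keeping the signs in the Leibniz rule straight, and the one substantive input beyond formal manipulation is the vanishing $A_{-1}=0$, which makes every element of $A_0$ automatically a cycle and streamlines the well-definedness check in the final paragraph.
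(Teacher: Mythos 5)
Your verification is correct and is exactly the routine unpacking of definitions that the paper leaves implicit (the statement is given as a background Fact with no written proof): the Leibniz rule from the chain-map condition on $\mu^M$, the degree-zero restriction for the $A_0$-structure, and the use of $A_{-1}=0$ plus the Leibniz rule to get a well-defined $\HH_0(A)$-action on $\HH_i(M)$. Nothing is missing; the sign bookkeeping and both well-definedness checks are handled correctly.
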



\begin{defn}
  \label{defn110218a}
Let $A$ be a DG $R$-algebra, and let $i$ be an integer.
The $i$th \emph{suspension} of a DG $A$-module $M$
is the DG $A$-module $\shift^iM$ defined by $(\shift^iM)_n :=
M_{n-i}$ and $\partial^{\shift^iM}_n := (-1)^i\partial^M_{n-i}$. 
The scalar multiplication
on $\shift^iM$ is defined by the formula
$\mu^{\shift^iM}(a\otimes m):=(-1)^{i|a|}\mu^M(a\otimes m)$.
\end{defn}

\begin{defn}
\label{defn110223b}
Let $A$ be a DG $R$-algebra. A DG $A$-module $M$ is
\emph{homologically degreewise finite} if
$\HH_i(M)$ is finitely generated over $\HH_0(A)$ for all $i$;
it is
\emph{homologically finite} if
it is homologically degreewise finite
and $\HH_i(M)=0$ for $|i|\gg 0$.
\end{defn}

\begin{defn}
\label{defn110218c}
Let $A$ be a DG $R$-algebra, and let $M,N$ be  DG $A$-modules.
The \emph{tensor product} $\Otimes[A]MN$ is the quotient
$(\Otimes MN)/U$ where $U$ is the subcomplex generated by all elements of the form
$\Otimes[]{(am)}{n}-(-1)^{|a| |m|}\Otimes[]{m}{(an)}$.
Given an element $\Otimes[]mn\in\Otimes MN$, we denote the image in $\Otimes[A]MN$ as $\Otimes[]mn$.
\end{defn}

\begin{fact}\label{fact110218b}
Let $A$ be a DG $R$-algebra, and let $M,N$ be  DG $A$-modules.
The tensor product $\Otimes[A]MN$ is a DG $A$-module
via the action
$$a(\Otimes[] mn):=\Otimes[]{(am)}{n}=(-1)^{|a| |m|}\Otimes[]{m}{(an)}.$$
\end{fact}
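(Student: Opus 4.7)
The plan is to verify in turn that (i) the prescribed rule $a \cdot (\Otimes[]mn) := \Otimes[]{(am)}{n}$ descends from $\Otimes MN$ to the quotient $\Otimes[A]MN$, (ii) the equality $\Otimes[]{(am)}{n} = (-1)^{|a||m|}\Otimes[]{m}{(an)}$ holds in $\Otimes[A]MN$, and (iii) the resulting $A$-action on $\Otimes[A]MN$ is associative, unital, and compatible with the inherited differential via the Leibniz rule.

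Step (ii) is almost definitional: it records exactly that the defining generators of $U$ vanish in the quotient, so there is nothing to check beyond unpacking Definition~\ref{defn110218c}. For step (i), I would fix a homogeneous $a\in A$ and check that the $R$-linear action of $a$ on $\Otimes MN$ sends each generator $(bm)\otimes n-(-1)^{|b||m|}m\otimes(bn)$ of $U$ into $U$. Indeed, applying $a$ to the first summand gives $((ab)m)\otimes n$ by associativity of scalar multiplication on $M$; applying it to the second gives $(-1)^{|b||m|}(am)\otimes(bn)$, and then in the quotient one rewrites $(am)\otimes(bn)=(-1)^{|a||m|}m\otimes(a(bn))=(-1)^{|a||m|}m\otimes((ab)n)$ using the relations in $U$ together with associativity. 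Comparing signs and collecting terms, one recognizes the result as a scalar multiple of a generator of $U$ indexed by $ab$, so the $a$-action preserves $U$. Combined with a parallel argument using the alternate formula $(-1)^{|a||m|}\Otimes[]{m}{(an)}$, one sees that the action is well-defined on the quotient and agrees via both formulas.

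Step (iii) then splits into three routine checks on elementary tensors. Unitality $1\cdot(\Otimes[]mn)=\Otimes[]mn$ and associativity $a(a'(\Otimes[]mn))=(aa')(\Otimes[]mn)$ follow directly from the corresponding properties of the action of $A$ on $M$ (using Definition~\ref{DGK2}) together with the associativity of $\mu^A$ from Definition~\ref{DGK}. To verify that $\Otimes[A]MN$ is a DG $A$-module it remains to check the Leibniz rule for the differential $\partial$ induced by $\partial(\Otimes[]mn)=\Otimes[]{\partial^M(m)}{n}+(-1)^{|m|}\Otimes[]{m}{\partial^N(n)}$ on the tensor product of $R$-complexes. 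Expanding $\partial(a\cdot(\Otimes[]mn))=\partial(\Otimes[]{(am)}{n})$ via the Leibniz rule for $M$ (Fact~\ref{fact110218a}) and regrouping, one obtains $\Otimes[]{(\partial^A(a)m)}{n}+(-1)^{|a|}\Otimes[]{(a\partial^M(m))}{n}+(-1)^{|a|+|m|}\Otimes[]{(am)}{\partial^N(n)}$, which rearranges to $\partial^A(a)\cdot(\Otimes[]mn)+(-1)^{|a|}a\cdot\partial(\Otimes[]mn)$, as required. Implicit in this argument is that $\partial$ itself descends to the quotient, i.e.\ $\partial(U)\subseteq U$; this is checked by applying $\partial$ to a generator of $U$ and using the Leibniz rules for both $M$ and $N$ to regroup the resulting six terms into generators of $U$ (of shifted bidegree).

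The only non-trivial aspect of this proof is sign bookkeeping: each displacement of $a$ past a homogeneous element of $M$ or $N$, each use of graded commutativity on $A$, and each suspension in the Leibniz rule contributes a sign that must ultimately cancel. I expect this to be the main (but purely mechanical) obstacle; conceptually, every step reduces directly to Definitions~\ref{DGK} and~\ref{DGK2} and the Leibniz rules already recorded in Facts~\ref{fact110216a} and~\ref{fact110218a}.
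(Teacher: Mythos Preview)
The paper records this statement as a \emph{Fact} without proof, so there is no argument in the paper to compare against; your verification is the standard one and is correct. One minor remark: in step~(i) your phrasing ``in the quotient one rewrites'' reads as slightly circular, but the actual computation you outline shows that the $a$-image of a generator of $U$ is an $R$-linear combination of generators of $U$ (indexed by $ab$ and by $a$ with shifted entries), which is exactly what is needed before passing to the quotient.
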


\begin{fact}\label{fact110218c}
Let $A\to B$ be a morphism of DG $R$-algebras.
Given a DG $A$-module $M$, the
``base changed'' complex $\Otimes[A]BM$ has the structure of a DG $B$-module
by the action $b(\Otimes[] {b'}m):=\Otimes[]{(bb')}{m}$.
This structure is compatible with the DG $A$-module structure via restriction of scalars.
\end{fact}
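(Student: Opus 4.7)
The plan is to verify the four requirements for a DG $B$-module structure on $\Otimes[A]BM$ under the proposed action $\mu(b \otimes (\Otimes[]{b'}m)) := \Otimes[]{(bb')}m$, and then to check the compatibility assertion. Throughout, I write $\varphi \colon A \to B$ for the given morphism and suppress it in notation, so the $A$-action on $B$ used in forming $\Otimes[A]BM$ is $a \cdot b' := \varphi(a)b'$ computed in $B$.

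First I would verify that for each $b \in B$, the formula $\Otimes[]{b'}m \mapsto \Otimes[]{(bb')}m$ is well-defined on the quotient $\Otimes[A]BM$, i.e., that it sends the relations defining the tensor product over $A$ to zero. On a generator of the relation subcomplex $U$, one computes on one side
\[
b \cdot (\Otimes[]{(ab')}m) = \Otimes[]{(b(ab'))}m,
\]
which by associativity and graded commutativity in $B$ equals $(-1)^{|a||b|}\Otimes[]{(a(bb'))}m$. On the other side,
\[
b \cdot ((-1)^{|a||b'|}\Otimes[]{b'}{(am)}) = (-1)^{|a||b'|}\Otimes[]{(bb')}{(am)},
\]
and applying the tensor relation in $\Otimes[A]BM$ once more to move $a$ to the left yields $(-1)^{|a||b'|}(-1)^{|a|(|b|+|b'|)}\Otimes[]{(a(bb'))}m = (-1)^{|a||b|}\Otimes[]{(a(bb'))}m$. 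The two sides agree, so $\mu$ descends to a map $\Otimes B{(\Otimes[A]BM)} \to \Otimes[A]BM$.

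Next I would confirm that $\mu$ is a chain map. The differential on $\Otimes[A]BM$ is induced from the standard one on $\Otimes BM$, so it suffices to expand $\partial(\Otimes[]{(bb')}m)$ using the Leibniz rule for the product on $B$ and match it term-by-term with $\mu(\partial(b \otimes (\Otimes[]{b'}m)))$; the matching is immediate from the Leibniz rule applied to the product in $B$. Associativity of $\mu$ reduces on simple tensors to $\Otimes[]{((bb')b'')}m = \Otimes[]{(b(b'b''))}m$, which follows from associativity in $B$, and unitality is the identity $\Otimes[]{(1 \cdot b')}m = \Otimes[]{b'}m$. This establishes the DG $B$-module structure.

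Finally, compatibility is essentially a tautology. Restriction of scalars along $\varphi$ gives $\Otimes[A]BM$ the $A$-action $a \cdot x := \varphi(a) \cdot x$, so on a simple tensor we have $a \cdot (\Otimes[]{b'}m) = \Otimes[]{(\varphi(a)b')}m = \Otimes[]{(ab')}m$, which is precisely the intrinsic DG $A$-module action on $\Otimes[A]BM$ coming from the tensor product construction. The only delicate point in the whole argument is the sign bookkeeping in the well-definedness step, where one must confirm that the sign $(-1)^{|a||b|}$ from graded commutativity in $B$ and the sign $(-1)^{|a||b'|}$ from the defining relation of $U$ combine correctly; the computation above shows that they do.
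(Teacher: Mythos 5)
Your verification is correct: the sign bookkeeping in the well-definedness step (the $(-1)^{|a||b|}$ from graded commutativity in $B$ cancelling against the two $(-1)^{|a||b'|}$ factors), the chain-map/Leibniz check, associativity, unitality, and the compatibility with restriction of scalars are all exactly what is needed. The paper states this as a background Fact without proof, and your direct element-by-element verification on the quotient $(B\otimes_R M)/U$ is the standard argument one would supply; the only cosmetic addition would be a remark that the generators of $U$ are carried into $U$ by the differential (via the Leibniz rule), so checking the action on those generators suffices.
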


\begin{defn}
  \label{defn110218b}
  Let $A$ be a DG $R$-algebra, and let $M$ be a DG $A$-module.
  The \emph{underlying $\und{A}$-module} associated to $M$ is the
$\und{A}$-module
$\und{M}=\oplus_{i=-\infty}^\infty M_i$.
A subset $E$ of $M$ is called a \emph{semi-basis} if it
is a basis of the underlying $A^\natural$-module $M^ \natural $.
If $M$ is bounded below, then $M$ is called \emph{semi-free}
if it has a semi-basis.\footnote{As is noted in~\cite{avramov:dgha}, when $M$ is not bounded below,
the definition of ``semi-free'' is significantly more technical.}
A \emph{semi-free resolution} of a DG $A$-module $N$ is a quasiisomorphism
$F\xra{\simeq}N$ of DG $A$-modules such that $F$ is semi-free.

Assume that $R$ and $A$ are local.
A \emph{minimal semi-free resolution} of $M$ is
a semi-free resolution $F\xra\simeq M$ such that $F$ is \emph{minimal}, i.e., 
each (equivalently, some) semi-basis of $F$ is finite in each degree and
the differential on
$\Otimes[A]{(A/\m_A)}{F}$ is 0.
\end{defn}

\begin{fact}\label{fact110218d}
Let $A$ be a DG $R$-algebra. Let $M$ be a homologically bounded below 
DG $A$-module.
Then $M$ has a semi-free resolution over $A$ by~\cite[Theorem 2.7.4.2]{avramov:dgha}.

Assume that $A$ is noetherian, and let $j$ be an integer.
If $\HH_i(M)$ is finitely generated over $\HH_0(A)$ for all $i$,
and $\HH_i(M)=0$ for $i<j$, then $M$ has a semi-free resolution $F\xra\simeq M$ such that
$\und{F}\cong\oplus_{i=j}^\infty \shift^i(\und{A})^{\beta_i}$
with $\beta_i\in\bbz$ for all $i$
and $F_i=0$ for all $i<j$;
see~\cite[Proposition~1]{apassov:hddgr}.
In particular, homologically finite DG $A$-modules admit such ``degreewise finite, bounded below'' semi-free resolutions.
Note that the condition $\und{F}\cong\oplus_{i=j}^\infty \shift^i(\und{A})^{\beta_i}$ says that the degree-$i$
piece of the semi-basis $E_i=E\cap F_i$ is finite for each $i$, and $E_i=\emptyset$ for $i<j$.

Assume that $R$ and $A$ are local. If $\HH_i(M)$ is finitely generated over $\HH_0(A)$ for all $i$,
and $\HH_i(M)=0$ for $i<j$, then $M$ has a minimal semi-free resolution $F\xra\simeq M$ such that $F_i=0$ for all $i<j$;
see~\cite[Proposition 2]{apassov:hddgr}.
In particular, homologically finite DG $A$-modules admit minimal semi-free resolutions.
\end{fact}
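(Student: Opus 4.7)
The plan is to prove all parts of this Fact by a uniform inductive cell-attachment construction, starting in degree $j$ and killing homology of the mapping cone stage-by-stage; at each stage one varies the selection of attaching cells according to which hypothesis is in force. Assume $\HH_i(M)=0$ for $i<j$, and set $F^{(j-1)}:=0$ with zero map to $M$. Suppose a semi-free subcomplex $F^{(n)}$ has been built with $\und{F^{(n)}}\cong\bigoplus_{i=j}^{n}\shift^i(\und{A})^{(\Lambda_i)}$ for some index sets $\Lambda_i$, together with a morphism $\phi_n\colon F^{(n)}\to M$ inducing a bijection on $\HH_i$ for $i<n$ and a surjection on $\HH_n$. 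Form $C^{(n)}=\cone(\phi_n)$, so $\HH_i(C^{(n)})=0$ for $i\leq n$. Pick a generating set $\{z_\lambda\}_{\lambda\in\Lambda_{n+1}}$ of $\HH_{n+1}(C^{(n)})$ over $\HH_0(A)$, write each cycle representative as $z_\lambda=(f_\lambda,m_\lambda)\in F^{(n)}_n\oplus M_{n+1}$ with $\partial^M(m_\lambda)=-\phi_n(f_\lambda)$, and adjoin basis elements $e_\lambda$ in degree $n+1$ with $\partial(e_\lambda):=f_\lambda$ and $\phi_{n+1}(e_\lambda):=m_\lambda$. Passing to the colimit $F:=\bigcup_n F^{(n)}$ produces a semi-free DG $A$-module with $F_i=0$ for $i<j$ and a quasiisomorphism $F\xra{\simeq}M$, proving the general bounded-below existence.

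For the degreewise-finite clause, assume $A$ is noetherian and each $\HH_i(M)$ is finitely generated over $\HH_0(A)$, and select each $\Lambda_{n+1}$ to be a \emph{finite} generating set. The point is that $\HH_{n+1}(C^{(n)})$ is finitely generated over $\HH_0(A)$ at every stage: the long exact sequence associated to $F^{(n)}\xra{\phi_n}M\to C^{(n)}$ expresses $\HH_{n+1}(C^{(n)})$ as an extension of a submodule of $\HH_n(F^{(n)})$ by a quotient of $\HH_{n+1}(M)$. The latter is finitely generated by hypothesis; the former is finitely generated because $F^{(n)}$ is (by induction) a finite direct sum of suspensions of $\und{A}$, each of whose homology groups are finitely generated over $\HH_0(A)$ by the noetherian hypothesis on $A$; and noetherianness of $\HH_0(A)$ closes the argument. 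This yields $\und{F}\cong\bigoplus_{i=j}^\infty\shift^i(\und{A})^{\beta_i}$ with $\beta_i:=|\Lambda_i|<\infty$ and $F_i=0$ for $i<j$. The ``in particular'' for homologically finite $M$ is immediate, since such modules are in particular bounded below and degreewise finite.

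For the minimal version, assume $R$ and $A$ are local, set $k:=\HH_0(A)/\m_{\HH_0(A)}$, and replace each finite generating set by a \emph{minimal} one: choose $\{z_\lambda\}$ so that its image in $\HH_{n+1}(C^{(n)})\otimes_{\HH_0(A)}k$ is a $k$-basis, and analogously begin the induction with a minimal generating set of $\HH_j(M)$. Nakayama's lemma applied to the finitely generated $\HH_0(A)$-modules at hand then guarantees that the chosen lifts generate. The main obstacle is verifying that minimality of these choices forces $\partial^F(F)\subseteq\m_A F$, equivalently that the attaching cycles $f_\lambda\in F^{(n)}_n$ can be arranged to lie in $\m_A F^{(n)}$. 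Handling this requires examining the mapping-cone differential modulo $\m_A$ and using the inductive hypothesis $\partial^{F^{(n)}}(F^{(n)})\subseteq\m_A F^{(n)}$ together with $\m A_0\subseteq\m_0$ from Fact~\ref{fact110211a}, so that any cycle representative whose class is nonzero modulo $\m_{\HH_0(A)}\HH_{n+1}(C^{(n)})$ can be adjusted by a boundary in $C^{(n)}$ to have its $F^{(n)}$-component in $\m_A F^{(n)}$; this is exactly the local analysis carried out in Apassov's argument.
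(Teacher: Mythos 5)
The paper offers no argument for this Fact beyond the citations to Avramov--Foxby--Halperin and to Apassov, so the relevant comparison is with those sources, and your inductive cell-attachment construction is exactly the standard proof of the first two assertions. That part of your proposal is essentially correct, up to one repair in the degreewise-finite step: you justify the finite generation of $\HH_n(F^{(n)})$ by saying $F^{(n)}$ is ``a finite direct sum of suspensions of $\und{A}$, each of whose homology groups are finitely generated,'' but only the underlying graded module decomposes this way; the differential is twisted, so homology is not additive over the free pieces. The correct (and easy) fix is to use the finite filtration $0=F^{(j-1)}\subseteq F^{(j)}\subseteq\cdots\subseteq F^{(n)}$ whose subquotients are finite direct sums of shifts of $A$, together with the long exact sequences and the noetherian hypothesis (each $\HH_i(A)$ finitely generated over the noetherian ring $\HH_0(A)$), to conclude that $\HH_i(F^{(n)})$ is degreewise finitely generated; then your extension argument for $\HH_{n+1}(C^{(n)})$ goes through.

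The genuine gap is in the minimal case, and it is not merely that you defer to Apassov (which is circular, since the Fact is itself quoted from Apassov): the mechanism you propose cannot work as stated. You suggest adjusting a cycle representative by a boundary of $C^{(n)}$ so that its $F^{(n)}$-component lands in $\m_A F^{(n)}$. But adding $\partial^{C^{(n)}}(g,p)$ changes the $F$-component only by $\partial^{F^{(n)}}(g)$, and under your own inductive hypothesis $\partial^{F^{(n)}}(F^{(n)})\subseteq\m_A F^{(n)}$, so the class of the $F$-component in $(F^{(n)}/\m_A F^{(n)})_n$ is an invariant of the homology class; no boundary adjustment can improve it. What actually has to be proved is that this class vanishes automatically for \emph{every} cycle $(f,m)\in C^{(n)}_{n+1}$, the DG analogue of ``syzygies of a minimal cover lie in $\m F$.'' This is true and provable along the following lines: write $f=\sum_{e\in E_n}a_ee+f'$ with $a_e\in A_0$ and $f'\in F^{(n-1)}_n$ (lower cells carry positive-degree coefficients, hence lie in $\m_AF^{(n)}$ already); the cycle conditions give $\sum_ea_e(f_e,m_e)=\partial^{C^{(n-1)}}(f',m)$, where $(f_e,m_e)$ are the representatives of the minimal generators $z_e$ of $\HH_n(C^{(n-1)})$ chosen at stage $n$. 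Hence $\sum_e a_ez_e=0$ in $\HH_n(C^{(n-1)})$, and since the $z_e$ are a minimal generating set of a finitely generated module over the local ring $\HH_0(A)$, Nakayama forces the images of the $a_e$ into $\m_{\HH_0(A)}$, i.e.\ $a_e\in\m_0$, so $f\in\m_AF^{(n)}$ as required. Without this lemma (or, alternatively, without the route of minimizing an arbitrary degreewise finite semi-free resolution by splitting off contractible summands), your minimal-resolution clause is unproved.
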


\begin{defn}
\label{defn110223a}
Let $A$ be a DG $R$-algebra, and let $M,N$ be  DG $A$-modules.
Given an integer $i$, a \emph{DG $A$-module homomorphism of degree $i$} is a homomorphism
$f\colon M\to N$ of the underlying $R$-complexes  such that
$f(am)=(-1)^{i|a|}af(m)$
for all $a\in A$ and $m\in M$.
We write $|f|=i$.
The (graded) submodule of $\Hom MN$
consisting of all DG $A$-module homomorphisms $M\to N$
is denoted $\Hom[A]MN$.
A homomorphism $f\in\Hom[A]MN_i$ is \emph{null-homotopic} if
it is in $\im(\partial^{\Hom[A]MN}_{i+1})$.
Two homomorphisms $M\to N$ are \emph{homotopic} if their difference is null-homotopic.
\end{defn}

\begin{fact}\label{fact110223a}
Let $A$ be a DG $R$-algebra, and let $M,N$ be  DG $A$-modules.
The complex $\Hom[A]MN$ is a DG $A$-module
via the action
$$(af)(m):=a (f(m))=(-1)^{|a||f|}f(am).
$$
\end{fact}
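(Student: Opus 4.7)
The statement claims three things that need verification: (i) for homogeneous $a\in A$ and $f\in\Hom[A]MN$, the rule $m\mapsto a(f(m))$ really lies in $\Hom[A]MN$ in degree $|a|+|f|$; (ii) the two displayed formulas for $(af)(m)$ agree; and (iii) the resulting bilinear operation makes $\Hom[A]MN$ into a DG $A$-module. My plan is to attack (i)--(ii) directly from the DG $A$-module homomorphism condition and graded commutativity of $A$, and then check the three axioms of (iii) one at a time.

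For (i), I would first note that the degree is clear: if $m$ has degree $j$, then $f(m)$ has degree $j+|f|$, so $a(f(m))$ has degree $j+|a|+|f|$. To show $A$-linearity of $af$ of degree $|a|+|f|$, I compute, for homogeneous $b\in A$, using first $A$-linearity of $f$ and then graded commutativity of the product on $A$:
\[
(af)(bm)=a\bigl(f(bm)\bigr)=(-1)^{|f||b|}a(bf(m))=(-1)^{|f||b|}(ab)f(m)=(-1)^{|f||b|+|a||b|}(ba)f(m),
\]
and re-associating gives $(-1)^{|b|(|a|+|f|)}b(af)(m)$, which is the required identity. For (ii), apply $A$-linearity of $f$ to $f(am)$ to get $(-1)^{|a||f|}a(f(m))$, and multiply both sides by $(-1)^{|a||f|}$, using $(-1)^{2|a||f|}=1$.

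For (iii), associativity $((ab)f)(m)=(a(bf))(m)$ and the unit axiom $(1\cdot f)(m)=f(m)$ fall straight out of the associativity and unital properties of the scalar multiplication $\mu^N$ on $N$. The substantive axiom is the Leibniz rule for the induced differential on $\Hom[A]MN$, which I recall is $\partial(f)=\partial^N\circ f-(-1)^{|f|}f\circ\partial^M$. I would evaluate both sides of $\partial(af)=\partial^A(a)f+(-1)^{|a|}a\,\partial(f)$ on an element $m$, expand using the Leibniz rule for $N$ as a DG $A$-module (Fact~\ref{fact110218a}), and collect terms; the two terms $\partial^N(af(m))$ split into $\partial^A(a)f(m)+(-1)^{|a|}a\partial^N(f(m))$, and pulling the sign $(-1)^{|a|}$ outside matches exactly the right-hand side.

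The routine but delicate part of this plan is the sign bookkeeping, particularly in part (i) where one reshuffles $a$ and $b$ past a homomorphism of odd degree, and in the Leibniz computation of (iii) where the signs from the Hom-differential must combine correctly with those from the action of $A$ on $N$. There is no conceptual obstacle beyond Koszul sign discipline; the graded commutativity of $A$ and the identity $(-1)^{2k}=1$ do all the real work.
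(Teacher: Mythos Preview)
The paper states this as a \emph{Fact} and offers no proof at all, so there is nothing to compare against; your verification is correct and fills in exactly the routine sign checks that the authors suppress. One small addition you might make explicit for completeness: before checking the Leibniz rule you are tacitly using that $\Hom[A]MN$ is closed under the Hom-differential (so that it really is a subcomplex of $\Hom MN$), which is another short sign computation of the same flavor as your part (i).
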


\begin{defn}
\label{defn110223a'}
Let $A\to B$ be a morphism of 
DG $R$-algebras, and let $M,M'$ be  DG $A$-modules.
Given  $f\in\Hom[A]M{M'}_i$, define
$\Otimes[A]Bf\colon\Otimes[A]BM\to\Otimes[B]B{M'}$ by the formula
$(\Otimes[A]Bf)(\Otimes[]bm)=(-1)^{i|b|}\Otimes[]b{f(m)}$.
\end{defn}

\begin{fact}\label{fact110223a'}
Let $A\to B$ be a morphism of 
DG $R$-algebras, and let $M,M'$ be  DG $A$-modules.
Given a homomorphism $f\in \Hom[A]M{M'}_i$,
the function $\Otimes[A]Bf$ is $B$-linear, that is, an element of $\Hom[B]{\Otimes[A]BM}{\Otimes[A]B{M'}}_i$.
Furthermore, if $f$ is a cycle in $\Hom[A]M{M'}_i$, then
$\Otimes[A]Bf$ is a cycle in $\Hom[B]{\Otimes[A]BM}{\Otimes[A]B{M'}}_i$.
\end{fact}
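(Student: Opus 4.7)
The plan is to verify the claim in two phases: first, that $\Otimes[A]Bf$ is a well-defined element of $\Hom[B]{\Otimes[A]BM}{\Otimes[A]B{M'}}_i$; second, that the assignment $f\mapsto\Otimes[A]Bf$ commutes with differentials, so that it carries cycles to cycles.

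For the first phase I would check two items. The formula $(\Otimes[A]Bf)(\Otimes[]bm)=(-1)^{i|b|}\Otimes[]b{f(m)}$ is a priori only defined on elementary tensors in $\Otimes BM$, so I first verify that it descends to the quotient $\Otimes[A]BM$. Applying the formula to a generating relation $\Otimes[]{(ab)}m-(-1)^{|a||b|}\Otimes[]{b}{(am)}$ (with $a\in A$, $b\in B$, $m\in M$) and substituting $f(am)=(-1)^{i|a|}af(m)$ from Definition~\ref{defn110223a}, the result comes out to $(-1)^{i(|a|+|b|)}$ times an analogous generator of the defining subcomplex of $\Otimes B{M'}$, hence is zero modulo that subcomplex. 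Next, $B$-linearity of degree $i$ is an immediate two-line computation: both $(\Otimes[A]Bf)\bigl(b\cdot(\Otimes[]{b'}m)\bigr)$ and $(-1)^{i|b|}b\cdot(\Otimes[A]Bf)(\Otimes[]{b'}m)$ reduce to $(-1)^{i(|b|+|b'|)}\Otimes[]{bb'}{f(m)}$ via associativity of the $B$-action and the identity $|bb'|=|b|+|b'|$.

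For the second phase, my plan is to establish the stronger identity
\[
\partial\bigl(\Otimes[A]Bf\bigr)=\Otimes[A]B(\partial f),
\]
where the right-hand side uses the construction of Definition~\ref{defn110223a'} applied to the degree-$(i-1)$ homomorphism $\partial f\in\Hom[A]M{M'}_{i-1}$. Evaluating both sides on an element $\Otimes[]bm$ and expanding the left-hand side via the Leibniz rule for tensor products (Fact~\ref{fact110218b}) together with the standard formula $\partial(g)=\partial\circ g-(-1)^{|g|}g\circ\partial$ for the differential on the Hom complex, the cross terms proportional to $\Otimes[]{\partial(b)}{f(m)}$ cancel, and each side collapses to $(-1)^{|b|(i+1)}\Otimes[]{b}{(\partial f)(m)}$. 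The cycle assertion is then immediate: if $\partial f=0$, then $\partial(\Otimes[A]Bf)=\Otimes[A]B(0)=0$.

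The only subtle aspect of the argument is the sign bookkeeping in the final computation; conceptually this is routine, since every sign is forced by the graded-commutativity conventions fixed in Definitions~\ref{defn110218c} and~\ref{defn110223a} together with the explicit formula in Definition~\ref{defn110223a'}. I do not anticipate any genuine obstacle.
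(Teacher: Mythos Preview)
Your proposal is correct and complete. The paper itself offers no proof for this statement: it is recorded as a ``Fact'' and left to the reader, so there is nothing to compare against beyond noting that your direct verification---well-definedness modulo the defining relations, $B$-linearity, and the identity $\partial(\Otimes[A]Bf)=\Otimes[A]B(\partial f)$---is exactly the routine check the authors are implicitly invoking.
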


\begin{defn}
\label{defn110223a''}
Let $A$ be a DG $R$-algebra, and let $M,N$ be  DG $A$-modules.
Given a semi-free resolution $F\xra\simeq M$, we set 
$\Ext[A]iMN=\HH_{-i}(\Hom[A]FN)$ for each integer $i$.
\end{defn}

\begin{fact}\label{fact110223a''}
Let $A$ be a DG $R$-algebra, and let $M,N$ be  DG $A$-modules.
For each $i$, the module $\Ext[A]iMN$ is independent of the choice of semi-free resolution of $M$,
and we have $\Ext[A]iMN\simeq \Ext[A]i{M'}{N'}$ whenever $M\simeq M'$ and $N\simeq N'$;
see~\cite[Propositions 1.3.1--1.3.3]{avramov:ifr}.
Given a semi-free resolution $F\simeq M$ and an integer $i$, the elements of $\Ext[A]iMN$
are by definition the homotopy equivalence
classes of morphisms of DG $A$-modules $F\to\shift^{-i}N$.
\end{fact}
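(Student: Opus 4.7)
The plan is to reduce everything to two standard lemmas about semi-free DG modules over $A$: (i) a comparison/lifting theorem, which says that if $g\colon X\to Y$ is a quasiisomorphism of DG $A$-modules and $F$ is semi-free, then the induced chain map $\Hom[A]FX\to\Hom[A]FY$ is a quasiisomorphism, so in particular every morphism $F\to Y$ lifts through $g$ uniquely up to homotopy; and (ii) as a consequence, if $F\xra\simeq M$ and $F'\xra\simeq M$ are two semi-free resolutions, then the comparison theorem produces mutually inverse (up to homotopy) quasiisomorphisms $F\to F'$ and $F'\to F$. I would first establish (i) by the standard induction on a semi-basis $E$ of $F$: filter $F$ by semi-subcomplexes generated by initial segments of $E$, and at each step reduce to the case $F=\shift^i\und{A}$, where the claim is just that $\Hom[A]{\shift^i\und{A}}{-}\simeq\shift^{-i}(-)$ preserves quasiisomorphisms (which it does, because one can compute $\HH(\Hom[A]{\shift^i\und{A}}{X})\cong\shift^{-i}\HH(X)$ directly).

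Given (i) and (ii), I would next show that $\Ext[A]iMN=\HH_{-i}(\Hom[A]FN)$ does not depend on the choice of semi-free resolution $F\xra\simeq M$: any two choices $F,F'$ are linked by a homotopy equivalence over $A$ by (ii), and $\Hom[A]{-}{N}$ sends homotopy equivalences to homotopy equivalences, hence to quasiisomorphisms on the level of Hom complexes, which induces a canonical isomorphism on $\HH_{-i}$.

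For the invariance statement, assume first that $N\xra\simeq N'$ is a quasiisomorphism and fix a semi-free resolution $F\xra\simeq M=M'$. By (i) the induced map $\Hom[A]FN\to\Hom[A]F{N'}$ is a quasiisomorphism, giving $\Ext[A]iMN\cong\Ext[A]iM{N'}$; a zigzag argument handles the general case $N\simeq N'$. Next assume $M\xra\simeq M'$; then any semi-free resolution $F\xra\simeq M$ composes to a semi-free resolution $F\xra\simeq M'$, so the two Ext modules are computed by the same complex $\Hom[A]FN$. A zigzag argument again covers the general case $M\simeq M'$. Combining the two directions gives $\Ext[A]iMN\cong\Ext[A]i{M'}{N'}$ whenever $M\simeq M'$ and $N\simeq N'$.

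Finally, for the identification with homotopy classes of morphisms $F\to\shift^{-i}N$: a cycle of degree $-i$ in $\Hom[A]FN$ is, by Definition~\ref{defn110223a} and the differential formula in Fact~\ref{fact110223a}, precisely an $A$-linear map $f\colon F\to N$ of degree $-i$ satisfying $\partial^N f=(-1)^if\partial^F$, which unwinds exactly to a morphism of DG $A$-modules $F\to\shift^{-i}N$ (using the sign convention of Definition~\ref{defn110218a}); and two such cycles differ by a boundary in $\Hom[A]FN$ iff the corresponding morphisms are null-homotopic in the sense of Definition~\ref{defn110223a}. The main technical obstacle in the whole argument is the inductive proof of (i) when the semi-basis $E$ is infinite, where one has to take a transfinite colimit of the filtration and verify that quasiisomorphisms are preserved under this colimit; once (i) is in place the rest is formal.
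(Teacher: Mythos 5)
The paper offers no proof of this Fact at all---it simply cites \cite[Propositions 1.3.1--1.3.3]{avramov:ifr}---and your argument is exactly the standard one from that source: the comparison lemma that $\Hom[A]F{-}$ preserves quasiisomorphisms when $F$ is semi-free, the resulting homotopy uniqueness of semi-free resolutions, and the unwinding of sign conventions identifying degree-$(-i)$ cycles of $\Hom[A]FN$ with morphisms $F\to\shift^{-i}N$, so your proposal is correct and consonant with the paper's (delegated) approach. One small precision on the step you flag as the main obstacle: $\Hom[A]{-}{X}$ turns the exhaustive filtration of $F$ by semi-basis degrees into an \emph{inverse} limit, so the needed verification is a $\varprojlim^1$ (Milnor sequence) argument over the countable degreewise filtration---made painless here because each inclusion in the filtration splits over $\und{A}$---rather than a transfinite colimit argument.
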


\section{Structure of Semi-free DG Modules and DG Homomorphisms}
\label{DG structures}

The proof of our Main Theorem involves the manipulation of 
the differentials on certain DG modules to construct
isomorphisms that are amenable to lifting. For this, we need
a concrete understanding of these differentials and the homomorphisms
between these DG modules. This concrete understanding is the goal of this section.
We begin by establishing some notation to be used
for much of the paper.

\begin{notn}\label{A,B}
Let $A$ be  a DG $R$-algebra such that each $A_i$ is free over $R$ of finite rank. Given an element $t\in R$, let
$K=K^R(t)$  denote the Koszul complex $0\to K_1\xra{t} K_0\to 0$
with $K_1\cong R\cong K_0$
and basis elements $1\in K_0$ and $e\in K_1$.
We fix a basis $\{\gamma_{i,1},\ldots,\gamma_{i,r_i}\}$ for
$A_i$. Let $B$  denote the DG $R$-algebra $K^R(t)\otimes_RA$,
which has the following form
$$
B\cong
\cdots \xra {\partial^{B}_{i+1}}
A_{i-1}
\oplus
A_i
\xra {\partial^{B}_{i}}
A_{i-2}
\oplus
A_{i-1}
\xra {\partial^{B}_{i-1}} \cdots \xra {\partial^{B}_{2}}
A_0
\oplus
A_1
\xra {\partial^{B}_{1}}
0
\oplus
A_0
\to 0.
$$
This uses the isomoprhism
$B_i=(K_1\otimes_RA_{i-1})\oplus(K_0\otimes_RA_{i})\cong A_{i-1}\oplus A_{i}$.
We identify $B_i$ with $A_{i-1}\oplus A_{i}$ for the remainder of this paper.
Under this identification, 
the sum $e\otimes a_{i-1}+1\otimes a_i\in B_i$
corresponds to the column vector $\left[\begin{smallmatrix}a_{i-1}\\ a_i\end{smallmatrix}\right]\in A_{i-1}\oplus A_{i}$.
The use of column vectors allows us to identify the differential of $B$
as the matrix
$$
\partial^B_i=
\left[
\begin{matrix}
-\partial^A_{i-1} & 0 \\
t &\partial^A_i
\end{matrix}
\right].
$$
\end{notn}

\begin{disc} \label{disc110311a}
In Notation~\ref{A,B},
the algebra structure on $B$ translates to the formula
$$\begin{bmatrix}a_{i-1}\\ a_i\end{bmatrix}\begin{bmatrix}c_{j-1}\\ c_j\end{bmatrix}
=\begin{bmatrix}a_{i-1}c_j+(-1)^ia_ic_{j-1}\\ a_ic_j\end{bmatrix}
$$
where $\left[\begin{smallmatrix}a_{i-1}\\ a_i\end{smallmatrix}\right]\in B_i$ and $\left[\begin{smallmatrix}c_{j-1}\\ c_j\end{smallmatrix}\right]\in B_j$.
This uses the fact that $e^2=0$ in $K$.

Note that a basis of $B_i$ is
$$\left\{\begin{bmatrix}\gamma_{i-1,1}\\0\end{bmatrix},\ldots,\begin{bmatrix}\gamma_{i-1,r_{i-1}}\\0\end{bmatrix},
\begin{bmatrix}0\\\gamma_{i,1}\end{bmatrix},\ldots,\begin{bmatrix}0\\\gamma_{i,r_i}\end{bmatrix}\right\}.$$
Also, note that the assumptions on $A$ imply that $A$ and $B$ are noetherian.
From the explicit description of $\partial^B$, it follows that $\HH_0(B)\cong\HH_0(A)/t\HH_0(A)$.

Assume that $R$ and $A$ are local.
Then $B$ is also local.
Moreover, given the augmentation ideal
$\m_A=\cdots \xra{\partial_2^A} A_1 \xra{\partial_1^A} \mathfrak m_0\to 0$
it is straightforward to show that the augmentation ideal of $B$ is
$$
\m_B=
\cdots \xra {\partial^{B}_{i+1}}
A_{i-1}
\oplus
A_i
\xra {\partial^{B}_{i}}
A_{i-2}
\oplus
A_{i-1}
\xra {\partial^{B}_{i-1}} \cdots \xra {\partial^{B}_{2}}
A_0
\oplus
A_1
\xra {\partial^{B}_{1}}
0
\oplus
\m_0
\to 0
$$
and we have $B/\m_B\cong A/\m_A$.
\end{disc}

\begin{notn}\label{notn110311a}
We work in the setting of Notation~\ref{A,B}.
Let
$\{\beta_i\}_{i=-\infty}^{\infty}$ be a set of cardinal numbers such that $\beta_i=0$ for $i\ll 0$.
For each integer $i$, set
$$M_i=\bigoplus_{j=0}^{\infty}A_j^{(\beta_{i-j})}$$
where $A_j^{(\beta_{i-j})}$ is a direct sum of
copies of $A_j$ indexed by $\beta_{i-j}$.
Identify each $\beta_i$ with a basis of $A_0^{(\beta_i)}$ over $A_0$, and set $\beta=\bigcup_i\beta_i$
considered as a subset of the disjoint union  $\bigsqcup_iM_i$.
Define scalar multiplication on $M$ over $A$ using the scalar multiplication on each $A^{(\beta_i)}$.

Consider $R$-module homomorphisms
\begin{align*}
\xi_i
&\colon M_{i}\to M_{i-1},
&\tau_i
&\colon M_{i}\to M_{i},
&
\delta_i
&\colon M_{i}\to M_{i-2},
&&\text{and}
&\alpha_i
&\colon M_{i}\to M_{i-1}.
\end{align*}
For each $i$, set
\begin{align*}
N_i&=
M_{i-1}
\oplus
M_i
&&\text{and}
&\partial_i^N
&=
\begin{bmatrix}
\xi_{i-1} & \delta_i \\
\tau_{i-1} & \alpha_i
\end{bmatrix}\colon N_i\to N_{i-1}.
\end{align*}
We consider the sequences
$$
M=\cdots \xra{\alpha_{i+1}} M_i\xra{\alpha_{i}} M_{i-1}
\xra{\alpha_{i-1}} \cdots
$$
and
$$
N=
\cdots \xra{\partial^N_{i+1}} N_i\xra{\partial^N_{i}} N_{i-1}
\xra{\partial^N_{i-1}} \cdots.
$$
Given elements
$\left[\begin{smallmatrix}a_{i-1}\\ a_i\end{smallmatrix}\right]\in B_i$ and $\left[\smash{\begin{smallmatrix}m_{j-1}\\ m_j\end{smallmatrix}}\right]\in N_j$,
we define
$$\begin{bmatrix}a_{i-1}\\ a_i\end{bmatrix}\begin{bmatrix}m_{j-1}\\ m_j\end{bmatrix}
=\begin{bmatrix}a_{i-1}m_j+(-1)^ia_im_{j-1}\\ a_im_j\end{bmatrix}.
$$
For each $\beta_{i,j}\in\beta_i$ we set $e_{i,j}=\left[\begin{smallmatrix}0\\ \beta_{i,j}\end{smallmatrix}\right]\in N_i$.
For each $i$, set $E_i=\{e_{i,j}\}_j$. Let $E=\bigcup_iE_i$
considered as a subset of the disjoint union  $\bigsqcup_iN_i$.
\end{notn}

\begin{disc}
In Notation~\ref{notn110311a}, the sequences $M$ and $N$ may not be  complexes. Note that the scalar multiplications defined on $M$ and $N$
make $\oplus_iM_i$ and $\oplus_iN_i$ into graded free modules over
$A^{\natural}$ and $B^{\natural}$, respectively.
\end{disc}

The next result is a straightforward consequence of the definitions in~\ref{notn110311a}.

\begin{lem}\label{lem110408b}
We work in the setting of Notations~\ref{A,B} and~\ref{notn110311a}.
The following conditions are equivalent.
\begin{enumerate}[\rm\quad(i)]
\item \label{lem110408b1}
The sequence $M$ is a semi-free DG $A$-module;
\item \label{lem110408b2}
The sequence $M$ is a  DG $A$-module; and
\item \label{lem110408b3}
For all integers $i$ and $j$ we have 
\begin{equation}\label{lem110408b4}
\alpha_{i-1}\alpha_i=0
\qquad\qquad
\alpha_{i+j}(\gamma_{i,s} m_j)=\partial_i^A(\gamma_{i,s})m_j+(-1)^i\gamma_{i,s}\alpha_j(m_j)
\end{equation}
for $s=1,\ldots,r_i$ and for all $m_j\in M_j$.
\end{enumerate}
\end{lem}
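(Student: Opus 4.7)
The plan is as follows. The implications \eqref{lem110408b1}$\Rightarrow$\eqref{lem110408b2}$\Rightarrow$\eqref{lem110408b3} are essentially tautological. Any semi-free DG $A$-module is a DG $A$-module by definition, giving \eqref{lem110408b1}$\Rightarrow$\eqref{lem110408b2}. For \eqref{lem110408b2}$\Rightarrow$\eqref{lem110408b3}, if $M$ is a DG $A$-module with differential $\alpha$, then $\alpha^2=0$ yields $\alpha_{i-1}\alpha_i=0$, and the Leibniz rule from Fact~\ref{fact110218a} applied to the pair $(\gamma_{i,s},m_j)$ yields equation~\eqref{lem110408b4}.

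The substance of the lemma lies in \eqref{lem110408b3}$\Rightarrow$\eqref{lem110408b1}. First I would observe that the underlying graded module $\und{M}=\oplus_i M_i$ is free over $\und{A}$ with basis $\beta$. This follows directly from the construction in Notation~\ref{notn110311a}: each $M_i$ is built as a direct sum of copies of the various $A_j$ indexed by $\beta_{i-j}$, and the scalar multiplication on $M$ is inherited pointwise from scalar multiplication on $A$. In particular, associativity and unitality of the scalar multiplication come for free, as does boundedness below, since $\beta_i=0$ for $i\ll 0$ forces $M_i=0$ for $i\ll 0$.

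Next I would check that $M$ is a chain complex: the relation $\alpha_{i-1}\alpha_i=0$ from \eqref{lem110408b4} is exactly this. Then I would verify the full Leibniz rule $\alpha_{i+j}(am_j)=\partial^A_i(a)m_j+(-1)^ia\alpha_j(m_j)$ for all $a\in A_i$ and $m_j\in M_j$. Since $A_i$ is free over $R$ with basis $\{\gamma_{i,1},\dots,\gamma_{i,r_i}\}$, we can write $a=\sum_s r_s\gamma_{i,s}$ with $r_s\in R$. By $R$-linearity of $\alpha_{i+j}$, $\partial^A_i$, and the scalar multiplication maps on $A$ and $M$, the hypothesis \eqref{lem110408b4} on basis vectors extends by $R$-linear combinations to the Leibniz identity for arbitrary $a\in A_i$. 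Together with $\alpha^2=0$, this says precisely that $\alpha$ makes $M$ into a DG $A$-module, and with the semi-basis $\beta$ already in hand, $M$ is semi-free.

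The proof is essentially formal; the only thing requiring care is the $R$-linear extension of the Leibniz rule in the final step, together with keeping the bookkeeping of indices and of the $A_0$-action versus the $R$-action straight. Nothing in the argument is conceptually deep, which is consistent with the lemma being advertised as a ``straightforward consequence of the definitions.''
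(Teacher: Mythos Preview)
Your proposal is correct and matches the paper's intended approach: the paper does not give a proof, stating only that the lemma is ``a straightforward consequence of the definitions,'' and your argument is exactly the routine unpacking of those definitions (with the only nontrivial step being the $R$-linear extension of the Leibniz rule from the basis $\{\gamma_{i,s}\}$ to all of $A_i$).
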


Next, we give a similar result for the sequence $N$.

\begin{lem}\label{lem110304a}
We work in the setting of Notations~\ref{A,B} and~\ref{notn110311a}.
The following conditions are equivalent.
\begin{enumerate}[\rm\quad(i)]
\item \label{lem110304a7}
The sequence $N$ is a semi-free DG $B$-module;
\item \label{lem110304a8}
The sequence $N$ is a  DG $B$-module; and
\item \label{lem110304a1}
For all integers $i$ and $j$  we have
\begin{align}
    \label{eq:s4}
\xi_i&=-\alpha_i
&\tau_i&=t
\\
    \label{eq:s4x}
\alpha_{i-1}\alpha_i&=-t\delta_i
&     \delta_i\alpha_{i+1}&=\alpha_{i-1}\delta_{i+1}
\end{align}
\vspace{-6mm}
\begin{gather}
 \label{lem110304a4}
  \delta_{i+j}(\gamma_{i,s} m_j)=\gamma_{i,s} \delta_j(m_j)\\
 \label{lem110304a3}
 \alpha_{i+j}(\gamma_{i,s} m_j)=\partial_i^A(\gamma_{i,s})m_j+(-1)^i\gamma_{i,s}\alpha_j(m_j)
\end{gather}
for $s=1,\ldots,r_i$ and for all $m_j\in M_j$.
\end{enumerate}
In particular, if $N$ is a DG $B$-module, then
\begin{equation}\partial_i^N=
\left[
\begin{matrix}
-\alpha_{i-1} & \delta_i \\
t & \alpha_i
\end{matrix}
\right].
\label{eq110413ax}
\end{equation}
\end{lem}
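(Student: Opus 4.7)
The implication $(i)\Rightarrow(ii)$ is immediate. I plan to prove the nontrivial content, the equivalence $(ii)\Leftrightarrow(iii)$, and then deduce $(iii)\Rightarrow(i)$ as follows: once $(ii)$ is in hand, $N$ is bounded below because $\beta_i=0$ for $i\ll 0$ forces $M_i=0$ and hence $N_i=0$ for $i\ll 0$, while the Discussion immediately preceding the lemma already observes that $\bigoplus_i N_i$ is a graded free $B^{\natural}$-module with basis $E$. Thus $N$ is semi-free with semi-basis $E$.

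For $(ii)\Rightarrow(iii)$, assume $N$ is a DG $B$-module. My plan is to exploit the Leibniz rule through two carefully chosen products $bm$. First, take $b=\begin{bmatrix}1\\0\end{bmatrix}=e\otimes 1\in B_1$, for which $\partial^B_1(b)=\begin{bmatrix}0\\t\end{bmatrix}$; the Leibniz identity applied to $b\begin{bmatrix}m_{j-1}\\m_j\end{bmatrix}=\begin{bmatrix}m_j\\0\end{bmatrix}$, expanded in matrix form and specialized to $m_{j-1}=0$ and $m_j=0$ separately, forces $\xi_j=-\alpha_j$ and $\tau_j=t$, yielding \eqref{eq:s4} and the matrix form \eqref{eq110413ax}. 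Next, $\partial^N\circ\partial^N=0$ applied to \eqref{eq110413ax} is a direct $2\times 2$ matrix computation whose nonzero entries give the two identities in \eqref{eq:s4x} (the $(2,1)$-entry vanishes automatically because $t$ is central). Finally, I would apply Leibniz to $b=\begin{bmatrix}0\\\gamma_{i,s}\end{bmatrix}\in B_i$ with $\partial^B_i(b)=\begin{bmatrix}0\\\partial^A_i(\gamma_{i,s})\end{bmatrix}$ acting on a general $\begin{bmatrix}m_{j-1}\\m_j\end{bmatrix}\in N_j$; specializing the resulting identity of column vectors to $m_{j-1}=0$ produces \eqref{lem110304a4}, while specializing to $m_j=0$ produces \eqref{lem110304a3}.

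For the converse $(iii)\Rightarrow(ii)$, the substitutions $\xi_i=-\alpha_i$ and $\tau_i=t$ give the differential \eqref{eq110413ax}, and the matrix identities in \eqref{eq:s4x} are exactly what is needed to make $\partial^N\circ\partial^N$ vanish. For the Leibniz rule, $R$-bilinearity in both arguments reduces the verification to basis elements of $B$, namely $\begin{bmatrix}\gamma_{i-1,s}\\0\end{bmatrix}$ and $\begin{bmatrix}0\\\gamma_{i,s}\end{bmatrix}$, acting on arbitrary $m\in N_j$; moreover, since $\begin{bmatrix}\gamma_{i-1,s}\\0\end{bmatrix}=\begin{bmatrix}1\\0\end{bmatrix}\cdot\begin{bmatrix}0\\\gamma_{i-1,s}\end{bmatrix}$ in $B$ and the differential on a DG algebra acts as a derivation on products, it suffices to handle $b=\begin{bmatrix}1\\0\end{bmatrix}$ and $b=\begin{bmatrix}0\\\gamma_{i,s}\end{bmatrix}$. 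These are exactly the two cases analyzed above, and equations \eqref{eq:s4}, \eqref{lem110304a4}, and \eqref{lem110304a3}, together with the centrality of $t$, deliver Leibniz on both generators.

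I expect the main difficulty to be pure sign bookkeeping: the sign $(-1)^{|a||x'|}$ from the product on $B=K\otimes_R A$, the sign $(-1)^i$ appearing in the $B_i$-action on $N_j$, and the Koszul sign $(-1)^{|b|}$ in the Leibniz rule must align, and the computations in each direction must produce precisely the signs recorded in the stated equations \eqref{eq:s4}--\eqref{lem110304a3}.
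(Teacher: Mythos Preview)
Your proposal is correct and follows essentially the same approach as the paper: exploit the Leibniz rule on well-chosen products to pin down $\xi_i,\tau_i$ and the compatibility relations for $\alpha$ and $\delta$, then use $\partial^N\partial^N=0$ for \eqref{eq:s4x}. The paper tests the Leibniz rule on all four product types $\left[\begin{smallmatrix}0\\\gamma_{i,s}\end{smallmatrix}\right]\left[\begin{smallmatrix}0\\m_j\end{smallmatrix}\right]$, $\left[\begin{smallmatrix}\gamma_{i,s}\\0\end{smallmatrix}\right]\left[\begin{smallmatrix}0\\m_j\end{smallmatrix}\right]$, $\left[\begin{smallmatrix}0\\\gamma_{i,s}\end{smallmatrix}\right]\left[\begin{smallmatrix}m_j\\0\end{smallmatrix}\right]$, $\left[\begin{smallmatrix}\gamma_{i,s}\\0\end{smallmatrix}\right]\left[\begin{smallmatrix}m_j\\0\end{smallmatrix}\right]$ and then specializes $\gamma_{0,1}=1$; your organization is a bit leaner in that you go straight to $b=\left[\begin{smallmatrix}1\\0\end{smallmatrix}\right]$ first and then use the multiplicativity of the Leibniz rule to avoid rechecking the $\left[\begin{smallmatrix}\gamma_{i-1,s}\\0\end{smallmatrix}\right]$ case in the converse, but the substance is identical. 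One small correction: when you apply Leibniz with $b=\left[\begin{smallmatrix}0\\\gamma_{i,s}\end{smallmatrix}\right]$, the specialization $m_{j-1}=0$ already yields \emph{both} \eqref{lem110304a4} and \eqref{lem110304a3} (as the top and bottom entries of the resulting column vector), so the further specialization $m_j=0$ is redundant rather than the source of \eqref{lem110304a3}.
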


\begin{proof}
\eqref{lem110304a8}$\implies$\eqref{lem110304a1}
Assume that $N$ is a  DG $B$-module. Then the scalar multiplication defined in Notation~\ref{notn110311a} must satisfy the
Leibniz rule.
The Leibniz rule for products of the form $\left[\begin{smallmatrix}0\\ \gamma_{i,s}\end{smallmatrix}\right]\left[\begin{smallmatrix}0\\ m_j\end{smallmatrix}\right]$,
where $1\leq s\leq r_i$ and $m_j\in M_j$, is equivalent to the following relations:
\begin{align}
    \label{eq:s1}
      \delta_{i+j}(\gamma_{i,s} m_j)&=\gamma_{i,s} \delta_j(m_j)\\
    \label{eq:s1a}
      \alpha_{i+j}(\gamma_{i,s} m_j)&=\partial_i^A(\gamma_{i,s})m_j+(-1)^i\gamma_{i,s}\alpha_j(m_j).
\intertext{The Leibniz rule for products of the form
$\left[\begin{smallmatrix}\gamma_{i,s}\\ 0\end{smallmatrix}\right]\left[\begin{smallmatrix}0\\ m_j\end{smallmatrix}\right]$
is equivalent to the following:}
    \label{eq:s2}
      \tau_{i+j}(\gamma_{i,s} m_j)&=t\gamma_{i,s} m_j\\
    \label{eq:s2a}
      \xi_{i+j}(\gamma_{i,s} m_j)&=-(\partial_i^A(\gamma_{i,s})m_j+(-1)^i\gamma_{i,s}\alpha_j(m_j)).
\intertext{The Leibniz rule for products of the form
$\left[\begin{smallmatrix}0\\\gamma_{i,s}\end{smallmatrix}\right]\left[\begin{smallmatrix} m_j\\ 0\end{smallmatrix}\right]$
is equivalent to the following:}
    \label{eq:s3}
      \tau_{i+j}(\gamma_{i,s} m_j)&=\gamma_{i,s} \tau_j(m_j)\\
    \label{eq:s3a}
      \xi_{i+j}(\gamma_{i,s} m_j)&=-\partial_i^A(\gamma_{i,s})m_j+(-1)^i\gamma_{i,s}\xi_j(m_j).
\end{align}
The Leibniz rule for
$\left[\begin{smallmatrix}\gamma_{i,s}\\ 0\end{smallmatrix}\right]\left[\begin{smallmatrix}m_j\\ 0\end{smallmatrix}\right]=0$
is equivalent to the following:
\begin{equation}
(-1)^it\gamma_{i,s}m_j+(-1)^{i+1}\gamma_{i,s}\tau_j(m_j)=0.\label{eq:s3a'}
\end{equation}

Equation~\eqref{lem110304a4} is the same as~\eqref{eq:s1},
and equation~\eqref{lem110304a3} is the same as~\eqref{eq:s1a}.
Comparing equations~\eqref{eq:s1a} and~\eqref{eq:s2a} with $\gamma_{0,1}=1$, we find  $\xi_i=-\alpha_i$.
Using equation~\eqref{eq:s2} also with $\gamma_{0,1}=1$, we see that $\tau_i=t$.
This explains~\eqref{eq:s4x} and~\eqref{eq110413ax}.
It also shows that~\eqref{eq:s3a'} is trivial.
Since  $N$ is an $R$-complex,
we have $\partial^N_i\partial^N_{i+1}=0$ which gives  the
equations in~\eqref{eq:s4x}.
This completes the proof of the implication.

The implication
\eqref{lem110304a1}$\implies$\eqref{lem110304a8} is handled similarly,
and the equivalence \eqref{lem110304a7}$\iff$\eqref{lem110304a8}
is straightforward.
\end{proof}

Our next two results characterize semi-free DG modules over $A$ and $B$.
The first one is straightforward.

\begin{lem}\label{lem110429a}
We work in the setting of Notations~\ref{A,B} and~\ref{notn110311a}.
If $F$ is a bounded below semi-free DG $A$-module with semi-basis $G$,
then $F\cong M$ for some appropriate choice of $\alpha_i$   satisfying~\eqref{lem110408b4}  for all $i$ and $j$
where $\beta_i=|G\cap F_i|$.
\end{lem}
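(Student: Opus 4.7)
The plan is to construct an explicit isomorphism of underlying graded modules using the semi-basis, then transport the differential of $F$ across to obtain the required $\alpha_i$'s.

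First I would unpack the data. Since $F$ is a bounded below semi-free DG $A$-module with semi-basis $G$, the underlying $\und{A}$-module $\und{F}$ is $\und{A}$-free on $G$; in particular, $F_n=\bigoplus_{i}\bigoplus_{g\in G\cap F_i} A_{n-i}\cdot g$, so setting $\beta_i=|G\cap F_i|$ gives $F_n\cong \bigoplus_{j\geq 0} A_j^{(\beta_{n-j})}=M_n$ as $R$-modules. Moreover, under the scalar multiplication specified in Notation~\ref{notn110311a}, the set $\beta=\bigcup_i\beta_i$ is a basis of $\und{M}$ over $\und{A}$ (with each $\beta_{i,s}$ of degree $i$), so $\und{M}$ is $\und{A}$-free on $\beta$.

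Next, I would pick, for each $i$, any bijection $G\cap F_i\to\beta_i$ (both sets have cardinality $\beta_i$) and extend these by $\und{A}$-linearity to a single graded $\und{A}$-isomorphism $\varphi\colon \und{F}\xra{\cong}\und{M}$. I would then define the $R$-linear maps $\alpha_i\colon M_i\to M_{i-1}$ of Notation~\ref{notn110311a} by transport of structure, namely
\begin{equation*}
\alpha_i:=\varphi_{i-1}\circ\partial^F_i\circ\varphi_i^{-1}.
\end{equation*}

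Finally, I would check that this family $\{\alpha_i\}$ satisfies the conditions~\eqref{lem110408b4} of Lemma~\ref{lem110408b}. The identity $\alpha_{i-1}\alpha_i=0$ is immediate from $\partial^F_{i-1}\partial^F_i=0$ and the fact that $\varphi$ is a bijection in each degree. The Leibniz formula $\alpha_{i+j}(\gamma_{i,s}m_j)=\partial_i^A(\gamma_{i,s})m_j+(-1)^i\gamma_{i,s}\alpha_j(m_j)$ follows by pulling back to $F$: since $\varphi$ is $\und{A}$-linear, writing $m_j=\varphi^{-1}(m_j)$ abusively, one has $\varphi(\gamma_{i,s}\cdot\varphi^{-1}(m_j))=\gamma_{i,s}m_j$, and then the Leibniz rule satisfied by $\partial^F$ transports to the desired identity for $\alpha$. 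Invoking Lemma~\ref{lem110408b} then promotes $M$ (equipped with these $\alpha_i$) to a semi-free DG $A$-module, and by its very construction $\varphi$ is a morphism of DG $A$-modules, hence an isomorphism $F\cong M$. The only real issue is bookkeeping between the indexing of $G$, of $\beta$, and of the $A^\natural$-module structures; once the bijection $G\cap F_i\leftrightarrow\beta_i$ is fixed, everything else is forced.
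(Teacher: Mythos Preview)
Your proof is correct and follows exactly the straightforward route the paper has in mind; indeed, the paper omits the proof entirely, merely remarking that this result is ``straightforward'' before stating it. Your transport-of-structure argument via a bijection of semi-bases and the appeal to Lemma~\ref{lem110408b} is the natural way to make this precise.
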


\begin{lem}\label{lem110304c}
We work in the setting of Notations~\ref{A,B} and~\ref{notn110311a}.
If $F$ is a bounded below semi-free DG $B$-module with semi-basis $G$,
then $F\cong N$ for some appropriate choices of $\xi_i$, $\tau_i$, $\alpha_i$, and $\delta_i$ satisfying~\eqref{eq:s4}--\eqref{lem110304a3} for all $i$ and $j$
where $\beta_i=|G\cap F_i|$.
\end{lem}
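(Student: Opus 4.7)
The plan is to use the semi-basis $G$ to transport $\partial^F$ onto the framework of Notation~\ref{notn110311a}. Set $G_i=G\cap F_i$ so that $\beta_i=|G_i|$, and fix an indexing $G_i=\{g_{i,j}\}_j$ matched against the basis $\beta_i=\{\beta_{i,j}\}_j$ of $A_0^{(\beta_i)}\subseteq M_i$. The identification $B_k=A_{k-1}\oplus A_k$ says $\und{B}\cong \und{A}\oplus e\und{A}$ as graded $\und{A}$-modules, so each $g\in G_i$ contributes $g$ in degree $i$ and $eg$ in degree $i+1$ to $\und{F}$. Collecting the contributions in degree $i$ and comparing with $N_i=M_{i-1}\oplus M_i$ yields a canonical graded $\und{B}$-isomorphism $\phi\colon \und{N}\xrightarrow{\cong}\und{F}$ with $\phi(e_{i,j})=g_{i,j}$, under which the first summand of $N_i$ corresponds to the $eg$-part of $F_i$ and the second to the $g$-part.

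Next I read off $\alpha_i$ and $\delta_i$ from $\partial^F$. Since $\partial^F(g_{i,j})\in F_{i-1}$, transporting through $\phi^{-1}$ gives a unique decomposition $\phi^{-1}(\partial^F(g_{i,j}))=\left[\begin{smallmatrix}\delta_i(\beta_{i,j})\\ \alpha_i(\beta_{i,j})\end{smallmatrix}\right]\in M_{i-2}\oplus M_{i-1}$, which defines $\alpha_i$ and $\delta_i$ on the basis $\beta_i$. I extend them to $R$-linear maps $\alpha_i\colon M_i\to M_{i-1}$ and $\delta_i\colon M_i\to M_{i-2}$ by taking~\eqref{lem110304a4} and~\eqref{lem110304a3} as recursive definitions on the $R$-basis $\{\gamma_{k,s}\beta':\beta'\in\beta_{i-k}\}$ of $\und{M_i}$. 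Setting $\xi_i=-\alpha_i$ and $\tau_i=t$ ensures~\eqref{eq:s4} holds by construction, and I assemble $\partial^N$ via formula~\eqref{eq110413ax}.

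The substance of the proof is checking the remaining relations~\eqref{eq:s4x}; once they are in place, Lemma~\ref{lem110304a} certifies that $N$ is a DG $B$-module, and the isomorphism $\phi$ commutes with $\partial$ on the $\und{B}$-generating set $\{e_{i,j}\}$ by construction, hence is a DG $B$-module isomorphism by Leibniz. To establish~\eqref{eq:s4x}, note that the recursive formulas~\eqref{lem110304a4} and~\eqref{lem110304a3} are exactly what the Leibniz rule in $F$ produces on the product $\gamma\cdot g_{i,j}$, and the parallel computation with $eg_{i,j}$ recovers the first column of~\eqref{eq110413ax}; thus $\phi^{-1}\partial^F\phi$ and $\partial^N$ agree on each $e_{i,j}$, so the identity $\partial^F\circ\partial^F(g_{i,j})=0$ translates directly to $\partial^N\circ\partial^N(e_{i,j})=0$, whose two components are precisely the equations~\eqref{eq:s4x} evaluated at $\beta_{i,j}$.

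The main obstacle is propagating these identities from the basis $\beta_i$ to all of $M_i$. The resolution is that both sides of each equation in~\eqref{eq:s4x} are $R$-linear maps out of $M_i$, and evaluating them on an arbitrary basis element $\gamma_{k,s}\beta'$ with $\beta'\in\beta_{i-k}$ reduces, via the recursions~\eqref{lem110304a4} and~\eqref{lem110304a3} together with $\partial^A\circ\partial^A=0$ and the Leibniz rule for $\partial^A$, to the already-verified identities at $\beta'$; no new input is required.
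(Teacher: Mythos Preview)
Your argument is correct, but it takes a more circuitous route than the paper's. The paper simply observes that the graded $\und{B}$-isomorphism $\phi\colon\und{N}\to\und{F}$ coming from the semi-basis lets one transport $\partial^F$ to an $R$-linear map $N_i\to N_{i-1}$, and any such map automatically has a $2\times 2$ block form; this defines $\xi_i,\tau_i,\alpha_i,\delta_i$ simultaneously. With that choice $N$ is isomorphic to $F$ as a DG $B$-module by construction, so Lemma~\ref{lem110304a} (the implication \eqref{lem110304a8}$\Rightarrow$\eqref{lem110304a1}) immediately yields all of~\eqref{eq:s4}--\eqref{lem110304a3}.

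You instead read only $\alpha_i,\delta_i$ off the semi-basis, extend them by hand via the recursions, and then impose $\xi_i=-\alpha_i$, $\tau_i=t$ by fiat. This obliges you to verify separately that your $\partial^N$ coincides with $\phi^{-1}\partial^F\phi$ on all of $N$, and to propagate the identities from $\beta_i$ to $M_i$. That verification does work---the key inputs being the Leibniz rule in $A$ and in $F$, exactly as you note---but it is unnecessary: defining all four block entries directly from the transported differential bypasses it. What your route buys is an explicit inductive description of $\partial^N$ in terms of the basis data; the cost is the extra bookkeeping, and a small expository wrinkle in that your recursive definition gives~\eqref{lem110304a4}--\eqref{lem110304a3} only for $m_j\in\beta_j$, so you must also check they extend to general $m_j$ (the same Leibniz argument handles this, but it should be stated before you invoke Lemma~\ref{lem110304a}).
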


\begin{proof}
Let $F$ be a bounded below semi-free DG $B$-module with semi-basis $G$.
For each $i$, set $\beta_i=|G\bigcap F_i|$.
Since $F$ is semi-free, it is straightforward to show that
$F_i\cong\bigoplus_{j=0}^{\infty}B_j^{(\beta_{i-j})}$.
Decomposing $B_j$ as $A_{j-1}\bigoplus A_j$,
we see that $F_j\cong N_j$ for each~$j$.
Since the $R$-module homomorphisms $N_j\to N_{j-1}$ are necessarily of the form
$\left[\begin{smallmatrix}
\xi_{i-1} & \delta_i \\
\tau_{i-1} & \alpha_i
\end{smallmatrix}\right]$,
it follows that there are
appropriate choices of $\xi_i$, $\tau_i$, $\alpha_i$, and $\delta_i$
such that $F\cong N$.
Finally, the fact that $F$ is a DG $B$-module implies that the maps
$\xi_i$, $\tau_i$, $\alpha_i$, and $\delta_i$ satisfy~\eqref{eq:s4}--\eqref{lem110304a3}, by Lemma~\ref{lem110304a}.
\end{proof}

The next result indicates how a semi-free DG $B$-module should look in order to 
be liftable to $A$. See Section~\ref{lifting} for more about this.

\begin{lem}\label{lem110408a}
We work in the setting of Notations~\ref{A,B} and~\ref{notn110311a}.
If $M$ is a semi-free DG $A$-module,
then $B\otimes_AM$ is  a semi-free DG $B$-module,
identified with a DG $B$-module $N$ with
$$
\partial_i^{N}=
\left[
\begin{matrix}
-\alpha_{i-1} & 0 \\
t &\alpha_i
\end{matrix}
\right].
$$
\end{lem}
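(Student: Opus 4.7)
The plan is to compute $B\otimes_A M$ directly using the definition of the tensor product of DG modules and the Leibniz rule for its differential. By Lemma~\ref{lem110429a}, we may assume at the outset that $M$ is presented in the form of Notation~\ref{notn110311a}: $M_i=\bigoplus_j A_j^{(\beta_{i-j})}$ with semi-basis $E=\bigcup_i E_i$ satisfying $|E_i|=\beta_i$, and with differential $\alpha_i$ satisfying the relations of Lemma~\ref{lem110408b}.

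Since $M^\natural$ is free over $A^\natural$ on $E$, the underlying module $(B\otimes_A M)^\natural$ is free over $B^\natural$ on the set $\{1\otimes e:e\in E\}$; in particular $B\otimes_A M$ is semi-free over $B$. By Lemma~\ref{lem110304c} it is therefore isomorphic to a DG $B$-module of the form $N$ given by Notation~\ref{notn110311a} (with the same $\beta_i$'s), so the task reduces to computing the differential in the explicit matrix form.

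For each integer $i$, identify $(B\otimes_A M)_i$ with $M_{i-1}\oplus M_i$ via the $R$-module decomposition $B_i\cong(K_1\otimes_R A_{i-1})\oplus(K_0\otimes_R A_i)$, sending $e\otimes m_{i-1}+1\otimes m_i$ to the column vector $\left[\begin{smallmatrix}m_{i-1}\\ m_i\end{smallmatrix}\right]$. Since $\partial^B(1)=0$ and $\partial^B(e)=t\in B_0$ with $|e|=1$, the Leibniz rule gives
$$
\partial^{B\otimes_A M}(e\otimes m_{i-1}+1\otimes m_i)
= t\cdot(1\otimes m_{i-1})-e\otimes\alpha_{i-1}(m_{i-1})+1\otimes\alpha_i(m_i),
$$
which corresponds to the column vector $\left[\begin{smallmatrix}-\alpha_{i-1}(m_{i-1})\\ tm_{i-1}+\alpha_i(m_i)\end{smallmatrix}\right]$. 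Reading this off as a matrix action exhibits the displayed formula for $\partial_i^N$; in particular, the $\delta$-entry forced by Lemma~\ref{lem110304c} is zero and the $\alpha$-entry coincides with the differential of $M$.

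The argument is essentially mechanical, with no genuine obstacle. The only subtle point is the sign $(-1)^{|e|}=-1$ in the Leibniz rule applied to the $e$-term, which is precisely what produces the $-\alpha_{i-1}$ in the upper-left corner of the matrix.
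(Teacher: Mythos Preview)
Your proof is correct and follows essentially the same approach as the paper. The paper's version is terser: it invokes the associativity isomorphism $B\otimes_A M\cong(K^R(t)\otimes_R A)\otimes_A M\cong K^R(t)\otimes_R M$ explicitly and then appeals to the definitions in Notation~\ref{notn110311a} together with Lemmas~\ref{lem110408b} and~\ref{lem110304a}, whereas you carry out the Leibniz-rule computation by hand; but the underlying idea is identical. One minor remark: since the statement already places us in Notation~\ref{notn110311a}, the $M$ in question is by hypothesis the one defined there, so invoking Lemma~\ref{lem110429a} at the outset is unnecessary (though harmless).
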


\begin{proof}
Using the isomorphisms
$$
B\otimes_AM\cong (K^R(t)\otimes_RA)\otimes_AM\cong K^R(t)\otimes_RM
$$
the result follows directly from the definitions in~\ref{notn110311a}
with Lemmas~\ref{lem110408b} and~\ref{lem110304a}.
\end{proof}

Next, we describe DG module homomorphisms over $A$ and $B$.
Again, the proof of the first of these results is straightforward.

\begin{lem}\label{lem110408d}
We work in the setting of Notations~\ref{A,B} and~\ref{notn110311a}.
Assume that $M$ is a semi-free DG $A$-module, and let $M'$ be a second semi-free DG $A$-module.
Fix an integer $p$.
A sequence of $R$-module homomorphisms $\{u_i\colon M_i\to M'_{i+p}\}$
is a DG $A$-module homomorphism $M\to M'$ of degree $p$ if and only if
it is a degree-$p$ homomorphism
$M\to M'$ of the underlying $R$-complexes  and
$$u_{i+j}(\gamma_{i,s}m_j)=(-1)^{pi}\gamma_{i,s}u_j(m_j)$$
for $s=1,\ldots,r_i$ and for all $m_j\in M_j$ for each integer $j$.
\end{lem}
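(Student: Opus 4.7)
The plan is to verify both implications essentially from the definitions, leveraging the fact that $A$ is free over $R$ with the distinguished basis $\{\gamma_{i,s}\}$ in each degree.

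For the forward direction, if $u = \{u_i\}$ is a degree-$p$ DG $A$-module homomorphism $M\to M'$, then by Definition~\ref{defn110223a} it is a degree-$p$ homomorphism of the underlying $R$-complexes and satisfies
$$u(am) = (-1)^{p|a|} a u(m)$$
for all $a\in A$ and $m\in M$. Specializing $a=\gamma_{i,s}\in A_i$ (so $|a|=i$) and $m=m_j\in M_j$ yields the displayed formula at once.

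For the backward direction, I would show that the stated property on the basis elements $\gamma_{i,s}$, combined with $R$-linearity of each $u_n$, forces the full DG $A$-linearity relation $u(am)=(-1)^{p|a|}au(m)$ for arbitrary $a\in A$ and $m\in M$. By linearity in each variable it suffices to treat homogeneous $a\in A_i$ and $m\in M_j$. Since $\{\gamma_{i,1},\ldots,\gamma_{i,r_i}\}$ is an $R$-basis of $A_i$, one may write $a=\sum_s r_s\gamma_{i,s}$ with $r_s\in R$. Then, using that $u$ is a homomorphism of $R$-complexes (hence each $u_n$ is $R$-linear) together with the hypothesis applied on each basis element, compute
$$
u_{i+j}(am_j) = \sum_s r_s\, u_{i+j}(\gamma_{i,s}m_j) = \sum_s r_s\, (-1)^{pi}\gamma_{i,s}u_j(m_j) = (-1)^{pi}au_j(m_j),
$$
as required.

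There is no real obstacle here; the only point to remark on is that the hypothesis need only be checked on the fixed $R$-basis of $A_i$ in each degree, rather than on all elements of $A$, and this reduction is exactly what the freeness of each $A_i$ over $R$ (together with $R$-linearity of $u$) enables. Combining the two implications completes the proof.
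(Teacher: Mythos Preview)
Your proof is correct and is precisely the straightforward verification from the definitions that the paper has in mind; the paper does not actually spell out a proof of this lemma, merely noting that it is straightforward, so there is nothing further to compare.
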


\begin{lem}\label{lem110413a}
We work in the setting of Notations~\ref{A,B} and~\ref{notn110311a}.
Assume that $N$ is a semi-free DG $B$-module. Let $N'$ be a second semi-free DG $B$-module
built from modules $M_i'$ and maps $\xi_i'$, $\tau_i'$, $\delta_i'$, and $\alpha_i'$
as in~\ref{notn110311a}.
Fix an integer $p$.
A sequence of $R$-module homomorphisms
$\left\{
S_i\colon N_i\to N'_{i+p}
\right\}$
is a DG $B$-module homomorphism $N\to N'$ of degree $p$ if and only if
it is a degree-$p$ homomorphism
$N\to N'$ of the underlying $R$-complexes
such that for all integers $i$ we have
$S_i=\left[
\begin{smallmatrix}
(-1)^pz_{i-1} & v_i \\
0 & z_i
\end{smallmatrix}
\right]$
for some $z_i\colon M_i\to M'_{i+p}$ and $v_i\colon M_i\to M'_{i+p-1}$
and
\begin{align}
    \label{eq:h2'}
      v_{i+j}(\gamma_{i,s}m_j)&=(-1)^{i(p+1)}\gamma_{i,s}v_j(m_j)\\
      z_{i+j}(\gamma_{i,s}m_j)&=(-1)^{ip}\gamma_{i,s}z_j(m_j)     \label{eq:h2''}
\end{align}
for $s=1,\ldots,r_i$ and for all $m_j\in M_j$ for each integer $j$.
\end{lem}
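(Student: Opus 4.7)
The plan is to translate the $B$-linearity condition defining a DG $B$-module homomorphism, namely $S(bn) = (-1)^{p|b|} b S(n)$, into explicit constraints on the $2 \times 2$ block decomposition of each $S_i \colon N_i = M_{i-1} \oplus M_i \to M'_{i+p-1} \oplus M'_{i+p} = N'_{i+p}$. Since $S$ is $R$-linear and every element of $B$ is an $R$-combination of products of the elements $\left[\begin{smallmatrix} 1 \\ 0 \end{smallmatrix}\right] \in B_1$ and $\left[\begin{smallmatrix} 0 \\ \gamma_{i,s} \end{smallmatrix}\right] \in B_i$, the verification of $B$-linearity reduces to these two families of generators. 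Similarly, since every element $\left[\begin{smallmatrix} m_{j-1} \\ 0 \end{smallmatrix}\right]$ in $N_j$ arises as $\left[\begin{smallmatrix} 1 \\ 0 \end{smallmatrix}\right] \cdot \left[\begin{smallmatrix} 0 \\ m_{j-1} \end{smallmatrix}\right]$, it will suffice to test on elements of the form $\left[\begin{smallmatrix} 0 \\ m_j \end{smallmatrix}\right]$.

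For the ``only if'' direction, I would first apply the $B$-linearity condition with $b = \left[\begin{smallmatrix} 1 \\ 0 \end{smallmatrix}\right]$ and $n = \left[\begin{smallmatrix} 0 \\ m_j \end{smallmatrix}\right]$. The product formula in Notation~\ref{notn110311a} gives $bn = \left[\begin{smallmatrix} m_j \\ 0 \end{smallmatrix}\right]$, and a similar computation yields $bS_j(n)$; comparing the two components of the resulting identity $S_{j+1}(bn) = (-1)^p b S_j(n)$ forces the $(2,1)$-block of each $S_i$ to vanish and the $(1,1)$-block to equal $(-1)^p$ times the $(2,2)$-block at the adjacent degree. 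Writing $z_i$ for the $(2,2)$-block and $v_i$ for the $(1,2)$-block of $S_i$ then yields the claimed matrix form. Next, I would apply $B$-linearity with $b = \left[\begin{smallmatrix} 0 \\ \gamma_{i,s} \end{smallmatrix}\right]$ and $n = \left[\begin{smallmatrix} 0 \\ m_j \end{smallmatrix}\right]$; expanding both sides via the product formula and the matrix form just established, the second-component comparison immediately yields \eqref{eq:h2''} and the first-component comparison immediately yields \eqref{eq:h2'}.

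For the ``if'' direction, I would assume the matrix form together with \eqref{eq:h2'} and \eqref{eq:h2''}, and directly verify $S(bn) = (-1)^{p|b|} b S(n)$ for $b$ ranging over the two families of generators acting on $n = \left[\begin{smallmatrix} 0 \\ m_j \end{smallmatrix}\right]$; each verification is a short direct computation that reverses the corresponding step of the forward direction. The main obstacle, and the bulk of the work, is careful sign bookkeeping: the interaction between the degree-$p$ shift, the degree $i$ of each $\gamma_{i,s}$, and the graded-commutative product on $B$ produces the signs $(-1)^p$, $(-1)^i$, $(-1)^{pi}$, and $(-1)^{i(p+1)}$, and these must all align precisely to produce exactly the formulas \eqref{eq:h2'} and \eqref{eq:h2''} stated in the lemma.
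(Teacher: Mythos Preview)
Your proposal is correct and follows essentially the same approach as the paper: both arguments expand the $B$-linearity condition $S(bn)=(-1)^{p|b|}bS(n)$ in the $2\times 2$ block coordinates and read off the constraints. Your version is slightly more streamlined, since you observe that $\left[\begin{smallmatrix}\gamma_{i,s}\\0\end{smallmatrix}\right]=\left[\begin{smallmatrix}1\\0\end{smallmatrix}\right]\left[\begin{smallmatrix}0\\\gamma_{i,s}\end{smallmatrix}\right]$ and $\left[\begin{smallmatrix}m_{j-1}\\0\end{smallmatrix}\right]=\left[\begin{smallmatrix}1\\0\end{smallmatrix}\right]\left[\begin{smallmatrix}0\\m_{j-1}\end{smallmatrix}\right]$, reducing to two test cases; the paper instead checks all four combinations of basis-type elements and then specializes to $\gamma_{0,1}=1$ to extract $y_i=0$ and $u_i=(-1)^pz_i$.
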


\begin{proof}
Fix a sequence of $R$-module homomorphisms
$S=\left\{
S_i\colon N_i\to N'_{i+p}
\right\}$.
By assumption, we have
$N_i=
M_{i-1}
\oplus
M_i$
and
$N'_i=
M'_{i-1}
\oplus
M'_i$,
so the maps $S_i$ have the form
$$
S_i=
\left[
\begin{matrix}
u_{i-1} & v_i \\
y_{i-1} & z_i
\end{matrix}
\right]\colon
M_{i-1}
\oplus
M_i
\to
M'_{i+p-1}
\oplus
M'_{i+p}
$$
where
$u_{i-1}\colon M_{i-1}\to M'_{i+p-1}$,
$v_{i}\colon M_{i}\to M'_{i+p-1}$,
$y_{i-1}\colon M_{i-1}\to M'_{i+p}$, and
$z_{i}\colon M_{i}\to M'_{i+p}$.

Assume that $S$ is a DG $B$-module homomorphism $N\to N'$ of degree $p$.
For each $b_q\in B_q$ and $n_d\in N_d$,
we have
\begin{equation}
S_{q+d}(b_qn_d)=(-1)^{pq}b_qS_d(n_d).
\label{lem110413a1}
\end{equation}
Therefore, given integers $i$ and $j$, for  $s=1,\ldots,r_i$ and for all $m_j\in M_j$, by writing equation~\eqref{lem110413a1} for the elements
$\left[
\begin{smallmatrix}
\gamma_{i,s} \\
0
\end{smallmatrix}
\right]\in B_{i+1}$
and
$\left[
\begin{smallmatrix}
0 \\
m_j
\end{smallmatrix}
\right]\in N_{j}$
we have
\begin{align}
    \label{eq:h1}
      y_{i+j}(\gamma_{i,s}m_j)&=0\\
      u_{i+j}(\gamma_{i,s}m_j)&=(-1)^{(i+1)p}\gamma_{i,s}z_j(m_j).  \label{eq:h1'}
\end{align}
Using the elements
$\left[
\begin{smallmatrix}
0 \\
\gamma_{i,s}
\end{smallmatrix}
\right]\in B_{i}$
and
$\left[
\begin{smallmatrix}
0 \\
m_j
\end{smallmatrix}
\right]\in N_{j}$
we have
\begin{align}
    \label{eq:h2}
      v_{i+j}(\gamma_{i,s}m_j)&=(-1)^{i(p+1)}\gamma_{i,s}v_j(m_j)\\
      z_{i+j}(\gamma_{i,s}m_j)&=(-1)^{ip}\gamma_{i,s}z_j(m_j).     \label{eq:h2'''}
\end{align}
Similar equations arise using the elements
$\left[
\begin{smallmatrix}
\gamma_{i,s} \\ 0
\end{smallmatrix}
\right]\in B_{i+1}$
and
$\left[
\begin{smallmatrix}
m_j\\ 0
\end{smallmatrix}
\right]\in N_{j+1}$
and the elements
$\left[
\begin{smallmatrix}
0 \\
\gamma_{i,s}
\end{smallmatrix}
\right]\in B_{i}$
and
$\left[
\begin{smallmatrix}
m_j \\ 0
\end{smallmatrix}
\right]\in N_{j+1}$.

By comparing  equations~\eqref{eq:h1'} and~\eqref{eq:h2'''}  we conclude that $z_i=(-1)^{p}u_i$.
Equation~\eqref{eq:h1} with $\gamma_{0,1}=1$ implies that $y_i=0$ for all $i$. Therefore, $S_i$ has the desired form $S_i=\left[
\begin{smallmatrix}
(-1)^pz_{i-1} & v_i \\
0 & z_i
\end{smallmatrix}
\right]$.
Also, equations~\eqref{eq:h2} and~\eqref{eq:h2'''} are exactly~\eqref{eq:h2'} and~\eqref{eq:h2''}.
This completes the proof of the forward implication.
The converse is established similarly.
\end{proof}

The last result of this section describes some homomorphisms
between semi-free DG  $B$-modules that are liftable to $A$.

\begin{lem}\label{lem110429b}
We work in the setting of Notations~\ref{A,B} and~\ref{notn110311a}.
Let $M$ and $M'$ be semi-free DG $A$-modules, and let $f\in \Hom[A]M{M'}_j$.
If $(B\otimes_AM)_i$ is identified with $M_{i-1} \oplus  M_i$ and similarly for
$(B\otimes_AM')_i$,
then the map $(\Otimes[A]Bf)_i\colon (B\otimes_AM)_i\to (B\otimes_AM')_{i+j}$ is identified with
the matrix $\left[
\begin{smallmatrix}
(-1)^jf_{i-1} & 0 \\
0 & f_i
\end{smallmatrix}
\right]$.
\end{lem}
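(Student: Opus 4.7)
The plan is to carry out a direct computation using the explicit identifications set up in Notation~\ref{A,B} and Lemma~\ref{lem110408a}. Recall that the identification $B_i \cong A_{i-1}\oplus A_i$ arises from the decomposition $B_i = (K_1\otimes_R A_{i-1})\oplus (K_0\otimes_R A_i)$, where the column vector $\left[\begin{smallmatrix}a_{i-1}\\ a_i\end{smallmatrix}\right]$ represents $e\otimes a_{i-1}+1\otimes a_i$. The very same identification, applied through the chain of isomorphisms $\Otimes[A]BM \cong \Otimes{(\Otimes{K^R(t)}{A})}{M} \cong \Otimes{K^R(t)}{M}$ used in the proof of Lemma~\ref{lem110408a}, expresses each element of $(B\otimes_A M)_i$ uniquely as $e\otimes m_{i-1}+1\otimes m_i$ with $m_{i-1}\in M_{i-1}$ and $m_i\in M_i$, corresponding to the column vector $\left[\begin{smallmatrix}m_{i-1}\\ m_i\end{smallmatrix}\right]$.

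From here I would simply apply the definition of $\Otimes[A]Bf$ given in Definition~\ref{defn110223a'}: for $b\in B$ of homogeneous degree $|b|$ and $m\in M$ we have $(\Otimes[A]Bf)(\Otimes[]bm)=(-1)^{j|b|}\Otimes[]b{f(m)}$. Applied to $1\otimes m_i$ (with $|1|=0$) this yields $1\otimes f_i(m_i)$, which sits in the summand $K_0\otimes_R M'_{i+j}$ and thus corresponds to $\left[\begin{smallmatrix}0\\ f_i(m_i)\end{smallmatrix}\right]\in M'_{i+j-1}\oplus M'_{i+j}$. Applied to $e\otimes m_{i-1}$ (with $|e|=1$) it yields $(-1)^j e\otimes f_{i-1}(m_{i-1})$, which sits in $K_1\otimes_R M'_{i+j-1}$ and therefore corresponds to $\left[\begin{smallmatrix}(-1)^j f_{i-1}(m_{i-1})\\ 0\end{smallmatrix}\right]$. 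Reading these two computations as the two columns of a matrix acting on $\left[\begin{smallmatrix}m_{i-1}\\ m_i\end{smallmatrix}\right]$ gives exactly the matrix $\left[\begin{smallmatrix}(-1)^jf_{i-1} & 0 \\ 0 & f_i\end{smallmatrix}\right]$ claimed in the statement.

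Since the argument is purely a matter of tracing the identifications and applying the definition of the induced homomorphism, there is no real obstacle; the only thing to be careful about is the sign, which comes entirely from the factor $(-1)^{j|b|}$ in Definition~\ref{defn110223a'} together with the fact that $|e|=1$ while $|1|=0$. This accounts for the sign appearing in the $(1,1)$-entry and for the absence of a sign in the $(2,2)$-entry of the matrix.
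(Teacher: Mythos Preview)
Your proof is correct and is exactly what the paper intends: the paper's proof is the single line ``This follows directly from Definition~\ref{defn110223a}'', and your computation is precisely the unpacking of that definition via the identification $B\otimes_A M\cong K^R(t)\otimes_R M$.
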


\begin{proof}
This follows directly from Definition~\ref{defn110223a}.
\end{proof}

\section{Liftings and Quasi-liftings of  DG Modules}
\label{lifting}

In this section we prove our Main Theorem, starting with the definitions
of our notions of liftings in the DG arena.

\begin{defn}\label{lifting def}
Let $T\to S$ be a morphism of DG $R$-algebras, and let $D$ be a DG $S$-module.
Then $D$ is \emph{quasi-liftable} to
$T$ if there is a semi-free  DG $T$-module $D'$ such that
$D\simeq S\otimes_TD'$; in this case $D'$ is called a \emph{quasi-lifting}
of $D$ to $T$.
We say that $D$ is \emph{liftable} to
$T$ if there is a  DG $T$-module $D'$ such that
$D\cong S\otimes_TD'$; in this case $D'$ is called a \emph{lifting}
of $D$ to $T$.
\end{defn}

\begin{disc}
In the definition of ``quasi-liftable'' we require that $D'$ is semi-free
in order to avoid the need for derived categories. If one prefers,
one can remove the semi-free assumption and require that
$D\simeq S\lotimes_TD'$ instead.
\end{disc}

Our next result is a technical lemma for use in the proof of our Main Theorem.

\begin{lem}\label{lem120125a}
We work in the setting of Notations~\ref{A,B} and~\ref{notn110311a}.
Let $n\geq 1$, and let  $N^{(n-1)}$ be a semi-free DG $B$-module
$$
N^{(n-1)}= \cdots\to
M_{i-1}
\oplus
M_i
\xra {\left[
\begin{smallmatrix}
-\alpha^{(n-1)}_{i-1} & t^{n-1}\delta^{(n-1)}_i \\
t&\alpha^{(n-1)}_i
\end{smallmatrix}
\right]}
M_{i-2}
\oplus
M_{i-1}
\to \cdots
$$
whose semi-basis over $B$ is finite in each degree. 
In the case $n\geq 2$, assume that 
for each index $i$ there are $R$-module homomorphisms 
$v^{(n-2)}_i\colon M_i\to M_{i-2}$ and $z^{(n-2)}_i\colon M_i\to M_{i-1}$
such that
\begin{gather}
\alpha^{(n-1)}_{i}=\alpha^{(n-2)}_{i}+t^{n-1}z^{(n-2)}_{i}
\label{eq120710g}
\\
\delta_i^{(n-1)}=v^{(n-2)}_i-t^{n-2}z^{(n-2)}_{i-1}z^{(n-2)}_{i}
\label{eq120710a}
\\
\label{eq120710c}
      v^{(n-2)}_{i+j}(\gamma_{i,s}m_j)=\gamma_{i,s}v^{(n-2)}_j(m_j)\\
      z^{(n-2)}_{i+j}(\gamma_{i,s}m_j)=(-1)^{i}\gamma_{i,s}z^{(n-2)}_j(m_j).     
\label{eq120710d}
\\
\label{eq120710e}
      \alpha^{(n-2)}_{i-2}z^{(n-2)}_{i-1}+z^{(n-2)}_{i-2}\alpha^{(n-2)}_{i-1}+tv^{(n-2)}_{i-1}=\delta^{(n-2)}_{i-1}\\
      \!\!\!\!-\alpha^{(n-2)}_{i-2}v^{(n-2)}_i+t^{n-2}\delta^{(n-2)}_{i-1}z^{(n-2)}_i-t^{n-2}z^{(n-2)}_{i-2}\delta^{(n-2)}_i+v^{(n-2)}_{i-1}\alpha^{(n-2)}_i=0.
\label{eq120710f}
\end{gather}
for $s=1,\cdots,r_i$,  for all $m_j\in M_j$, and for each integer $j$.

If 
$\Ext[B]2{N^{(n-1)}}{N^{(n-1)}}=0$, then 
there is a semi-free DG $B$-module $N^{(n)}$
and an isomorphism of DG $B$-modules
$$\xymatrix@C=3.6em@R=4em{
N^{(n-1)}=\cdots
\ar[r]&
M_{i-1} \oplus  M_{i}
\ar[d]_{\left[\begin{smallmatrix}1 & -t^{n-1}z^{(n-1)}_{i} \\ 0 &
1\end{smallmatrix}\right]}\ar[rr]^{\left[\begin{smallmatrix}\!\!-\alpha^{(n-1)}_{i-1} & t^{n-1}\delta^{(n-1)}_{i}\!\! \\ t &
\alpha^{(n-1)}_{i} \end{smallmatrix}\right]}&&
M_{i-2} \oplus M_{i-1}
\ar[r]\ar[d]^{\left[\begin{smallmatrix}1 & -t^{n-1}z^{(n-1)}_{i-1} \\ 0 &
1 \end{smallmatrix}\right]}&
\cdots \\
N^{(n)}=\cdots
\ar[r]&
M_{i-1} \oplus  M_{i}
\ar[rr]^{\left[\begin{smallmatrix}-\alpha^{(n)}_{i-1} & t^n\delta^{(n)}_{i} \\ t &
\alpha^{(n)}_{i} \end{smallmatrix}\right]}&&
M_{i-2} \oplus M_{i-1}
\ar[r]&
\cdots.
}$$
such that for each index $i$ there are $R$-module homomorphisms 
$v^{(n-1)}_i\colon M_i\to M_{i-2}$ and $z^{(n-1)}_i\colon M_i\to M_{i-1}$
such that
\begin{gather}
\alpha^{(n)}_{i}=\alpha^{(n-1)}_{i}+t^{n}z^{(n-1)}_{i}
\label{eq120710h}
\\
\delta_i^{(n)}=v^{(n-1)}_i-t^{n-1}z^{(n-1)}_{i-1}z^{(n-1)}_{i}
\label{eq120710i}
\\
\label{eq120710j}
      v^{(n-1)}_{i+j}(\gamma_{i,s}m_j)=\gamma_{i,s}v^{(n-1)}_j(m_j)\\
      z^{(n-1)}_{i+j}(\gamma_{i,s}m_j)=(-1)^{i}\gamma_{i,s}z^{(n-1)}_j(m_j).     
\label{eq120710k}
\\
\label{eq120710l}
      \alpha^{(n-1)}_{i-2}z^{(n-1)}_{i-1}+z^{(n-1)}_{i-2}\alpha^{(n-1)}_{i-1}+tv^{(n-1)}_{i-1}=\delta^{(n-1)}_{i-1}\\
      \!\!\!\!-\alpha^{(n-1)}_{i-2}v^{(n-1)}_i+t^{n-1}\delta^{(n-1)}_{i-1}z^{(n-1)}_i-t^{n-1}z^{(n-1)}_{i-2}\delta^{(n-1)}_i+v^{(n-1)}_{i-1}\alpha^{(n-1)}_i=0.
\label{eq120710m}
\end{gather}
for $s=1,\cdots,r_i$,  for all $m_j\in M_j$, and for each integer $j$.
\end{lem}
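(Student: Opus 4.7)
The approach is standard formal deformation theory: $N^{(n-1)}$ is an $n$th-order approximate lift, and the obstruction to upgrading it to the next order lives in $\Ext[B]2{N^{(n-1)}}{N^{(n-1)}}$. Concretely, I will first write down a degree $-2$ cocycle $\phi^{(n-1)}$ in $\Hom[B]{N^{(n-1)}}{N^{(n-1)}}$ representing this obstruction, then trivialize it via the vanishing hypothesis, and finally read off $z^{(n-1)}$, $v^{(n-1)}$, and the desired isomorphism $N^{(n-1)}\cong N^{(n)}$ from the resulting homotopy.

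I would set
\[
\phi^{(n-1)}_i:=\begin{bmatrix}\delta^{(n-1)}_{i-1} & 0 \\ 0 & \delta^{(n-1)}_i\end{bmatrix}\colon N^{(n-1)}_i\longrightarrow N^{(n-1)}_{i-2}.
\]
Lemma~\ref{lem110413a} with $p=-2$, combined with equation~\eqref{lem110304a4} of Lemma~\ref{lem110304a} applied to the DG $B$-module $N^{(n-1)}$, shows that $\phi^{(n-1)}$ is a degree $-2$ DG $B$-module homomorphism. A direct matrix multiplication then gives $\partial^{N^{(n-1)}}\circ\phi^{(n-1)}=\phi^{(n-1)}\circ\partial^{N^{(n-1)}}$: the off-diagonal entries of the difference cancel identically, and both diagonal entries collapse to $\alpha^{(n-1)}_{k-1}\delta^{(n-1)}_{k+1}=\delta^{(n-1)}_k\alpha^{(n-1)}_{k+1}$, which is the top-right component of $(\partial^{N^{(n-1)}})^2=0$ encoded in Lemma~\ref{lem110304a}\eqref{eq:s4x}. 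Hence $\phi^{(n-1)}$ is a cycle in $\Hom[B]{N^{(n-1)}}{N^{(n-1)}}_{-2}$. Since $N^{(n-1)}$ is semi-free, Fact~\ref{fact110223a''} identifies $\Ext[B]2{N^{(n-1)}}{N^{(n-1)}}$ with $\HH_{-2}(\Hom[B]{N^{(n-1)}}{N^{(n-1)}})$, so the vanishing hypothesis supplies $\psi\in\Hom[B]{N^{(n-1)}}{N^{(n-1)}}_{-1}$ with $\partial(\psi)=\phi^{(n-1)}$.

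By Lemma~\ref{lem110413a} with $p=-1$, such $\psi$ is necessarily of the form $\psi_i=\left[\begin{smallmatrix}-z^{(n-1)}_{i-1} & v^{(n-1)}_i \\ 0 & z^{(n-1)}_i\end{smallmatrix}\right]$ for $R$-module maps $z^{(n-1)}_i\colon M_i\to M_{i-1}$ and $v^{(n-1)}_i\colon M_i\to M_{i-2}$ that automatically satisfy \eqref{eq120710j}--\eqref{eq120710k}. Expanding $\partial(\psi)=\phi^{(n-1)}$ entrywise, the two diagonal entries both reproduce \eqref{eq120710l} (at indices $i-1$ and $i$) and the $(1,2)$-entry reproduces \eqref{eq120710m}, while the $(2,1)$-entry is automatic. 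I then define $\alpha^{(n)}$ and $\delta^{(n)}$ by \eqref{eq120710h}--\eqref{eq120710i}, let $N^{(n)}$ be the graded $B^\natural$-module $\bigoplus_i(M_{i-1}\oplus M_i)$ endowed with the stated differential, and verify that $\left[\begin{smallmatrix}1 & -t^{n-1}z^{(n-1)}_i \\ 0 & 1\end{smallmatrix}\right]$ is a $B$-linear chain isomorphism $N^{(n-1)}\to N^{(n)}$: matrix multiplication shows it is a chain map precisely when $t\delta^{(n)}_i=\delta^{(n-1)}_i-z^{(n-1)}_{i-1}\alpha^{(n-1)}_i-\alpha^{(n-1)}_{i-1}z^{(n-1)}_i-t^nz^{(n-1)}_{i-1}z^{(n-1)}_i$, which follows by substituting \eqref{eq120710i} into \eqref{eq120710l}; $B$-linearity is Lemma~\ref{lem110413a} with $p=0$ applied to the diagonal entry $1$ and the off-diagonal entry $-t^{n-1}z^{(n-1)}$, using exactly \eqref{eq120710k}; invertibility is immediate from the upper-triangular shape. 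Transporting the DG $B$-structure of $N^{(n-1)}$ along this isomorphism makes $N^{(n)}$ into the desired DG $B$-module. The genuine insight is identifying $\phi^{(n-1)}$ as the correct obstruction cocycle, with diagonal entries $\delta^{(n-1)}$ and zero off-diagonal; once this is in place, equations \eqref{eq120710h}--\eqref{eq120710m} fall out algorithmically from the identity $\partial(\psi)=\phi^{(n-1)}$, and the remaining work is purely matrix bookkeeping with careful sign tracking through the $t^{n-1}$ and $t^n$ factors.
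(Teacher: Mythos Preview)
Your overall strategy matches the paper's exactly, and from the point where the homotopy $\psi$ exists onward your argument is fine. The gap is earlier, in your verification that $\phi^{(n-1)}$ is a $B$-linear cycle when $n\geq 2$.

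You invoke equation~\eqref{lem110304a4} of Lemma~\ref{lem110304a} applied to $N^{(n-1)}$. But the ``$\delta$'' in that lemma is the upper-right entry of the differential, which for $N^{(n-1)}$ is $t^{n-1}\delta^{(n-1)}_i$, not $\delta^{(n-1)}_i$. So Lemma~\ref{lem110304a} only yields
\[
t^{n-1}\delta^{(n-1)}_{i+j}(\gamma_{i,s}m_j)=\gamma_{i,s}\,t^{n-1}\delta^{(n-1)}_j(m_j),
\]
and the same $t^{n-1}$-factor appears in the commutation relation: from $(\partial^{N^{(n-1)}})^2=0$ you get $t^{n-1}\bigl(\alpha^{(n-1)}_{k-1}\delta^{(n-1)}_{k+1}-\delta^{(n-1)}_k\alpha^{(n-1)}_{k+1}\bigr)=0$, not the unweighted identity you claim. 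Since the entire point of the paper is that $t$ need not be $R$-regular, you cannot cancel $t^{n-1}$. For $n=1$ your argument is correct; for $n\geq 2$ it is not, and this is precisely why the lemma carries the additional hypotheses \eqref{eq120710g}--\eqref{eq120710f}, which you never use. The paper verifies $B$-linearity and the cycle condition for $\phi^{(n-1)}$ in the case $n\geq 2$ by expressing $\delta^{(n-1)}_i=v^{(n-2)}_i-t^{n-2}z^{(n-2)}_{i-1}z^{(n-2)}_i$ via \eqref{eq120710a} and then appealing to \eqref{eq120710c}--\eqref{eq120710f}. You need to do the same.
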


\begin{proof}
By Lemma~\ref{lem110304a}, we conclude  that for all integers $i$, $j$  we have
\begin{gather}
    \label{eq110427h}
\alpha_{i-1}^{(n-1)}\alpha_i^{(n-1)}=-t^{n}\delta_i^{(n-1)}
\qquad\qquad
     t^{n-1}\delta_i^{(n-1)}\alpha_{i+1}^{(n-1)}=\alpha_{i-1}^{(n-1)}t^{n-1}\delta_{i+1}^{(n-1)}\\
 \label{eq110427i}
  t^{n-1}\delta_{i+j}^{(n-1)}(\gamma_{i,s} m_j)=\gamma_{i,s} t^{n-1}\delta_j^{(n-1)}(m_j)\\
 \label{eq110427j}
 \alpha_{i+j}^{(n-1)}(\gamma_{i,s} m_j)=\partial_i^A(\gamma_{i,s})m_j+(-1)^i\gamma_{i,s}\alpha_j^{(n-1)}(m_j)
\end{gather}
for $s=1,\ldots,r_i$ and for all $m_j\in M_j$.

Note that the sequence
$\left\{\left[
\begin{smallmatrix}
\delta^{(n-1)}_{i-1} & 0 \\
0 &\delta^{(n-1)}_i
\end{smallmatrix}
\right]\colon M_{i-1}  \oplus  M_{i} 
\to M_{i-3}  \oplus M_{i-2} 
\right\}$
is a cycle of degree $-2$ in the complex $\Hom[B]{N^{(n-1)}}{N^{(n-1)}}$.
Indeed, in the case $n=1$, this follows from
equations~\eqref{eq110427h}--\eqref{eq110427i};
in the case $n\geq 2$, this follows from
equations~\eqref{eq120710g}--\eqref{eq120710f}.
The assumption $\Ext[B]2{N^{(n-1)}}{N^{(n-1)}}=0$
implies that this cycle is null-homotopic, that is,
there is a DG $B$-module homomorphism $S^{(n-1)}=\{S^{(n-1)}_i\}\colon N^{(n-1)}\to N^{(n-1)}$ of
degree $-1$ such that
\begin{equation}
    \label{eq:ss3}
    \left[
\begin{smallmatrix}
\!\delta^{(n-1)}_{i-1} & 0 \\
0 &\delta^{(n-1)}_i\!
\end{smallmatrix}
\right]=
\left[
\begin{smallmatrix}
\!-\alpha^{(n-1)}_{i-2} & t^{n-1}\delta^{(n-1)}_{i-1}\! \\
t &\alpha^{(n-1)}_{i-1}
\end{smallmatrix}
\right]S^{(n-1)}_i+S^{(n-1)}_{i-1}\left[
\begin{smallmatrix}
\!-\alpha^{(n-1)}_{i-1} & t^{n-1}\delta^{(n-1)}_{i}\! \\
t &\alpha^{(n-1)}_{i}
\end{smallmatrix}
\right].
\end{equation}
Lemma~\ref{lem110413a} implies that each  $S^{(n-1)}_i$
is of the form
$$
S^{(n-1)}_i=\left[
\begin{smallmatrix}
-z^{(n-1)}_{i-1} & v^{(n-1)}_i \\
0 &z^{(n-1)}_i
\end{smallmatrix}
\right]
$$
where $v^{(n-1)}_i\colon M_i\to M_{i-2}$ and $z^{(n-1)}_i\colon M_i\to M_{i-1}$, and for $s=1,\cdots,r_i$, and for all $m_j\in M_j$ for each integer $j$ we have
\begin{align}
\label{eq:ss3'}
      v^{(n-1)}_{i+j}(\gamma_{i,s}m_j)&=\gamma_{i,s}v^{(n-1)}_j(m_j)\\
      z^{(n-1)}_{i+j}(\gamma_{i,s}m_j)&=(-1)^{i}\gamma_{i,s}z^{(n-1)}_j(m_j).     \label{eq:ss3''}
\end{align}
Hence for every $i$ the equality \eqref{eq:ss3} implies that we have
\begin{gather}
    \label{eq:ss4}
      \alpha^{(n-1)}_{i-2}z^{(n-1)}_{i-1}+z^{(n-1)}_{i-2}\alpha^{(n-1)}_{i-1}+tv^{(n-1)}_{i-1}=\delta^{(n-1)}_{i-1}\\
      \!\!\!\!-\alpha^{(n-1)}_{i-2}v^{(n-1)}_i+t^{n-1}\delta^{(n-1)}_{i-1}z^{(n-1)}_i-t^{n-1}z^{(n-1)}_{i-2}\delta^{(n-1)}_i+v^{(n-1)}_{i-1}\alpha^{(n-1)}_i=0.
    \label{eq:ss4'}
\end{gather}
Now let
\begin{align}
\alpha^{(n)}_{i}&=\alpha^{(n-1)}_{i}+t^nz^{(n-1)}_{i}&\delta_i^{(n)}&=v^{(n-1)}_i-t^{n-1}z^{(n-1)}_{i-1}z^{(n-1)}_{i}
\label{eq110420a}
\end{align}
and
$$
N^{(n)}= \cdots\to
M_{i-1}
\oplus
M_i
\xra {\left[
\begin{smallmatrix}
-\alpha^{(n)}_{i-1}& t^n\delta_i^{(n)} \\
t&\alpha^{(n)}_{i}
\end{smallmatrix}
\right]}
M_{i-2}
\oplus
M_{i-1}
\to \cdots.
$$
Note that the conclusions~\eqref{eq120710h}--\eqref{eq120710m} follow directly 
from~\eqref{eq:ss3'}--\eqref{eq110420a}.

Since $N^{(n-1)}$ is a DG $B$-module, equations~\eqref{eq110427h}, \eqref{eq:ss4}, and~\eqref{eq110420a} give the following equation for all $i$:
\begin{equation}\label{eq110427l}
\alpha^{(n)}_{i-1}\alpha^{(n)}_i+t^{n+1}\delta^{(n)}_i=0.
\end{equation}
By equations~\eqref{eq110427h}, \eqref{eq:ss4}, \eqref{eq:ss4'}, and~\eqref{eq110420a}, for all $i$ we have
\begin{equation}\label{eq110427m}
-\alpha^{(n)}_{i-1}t^n\delta^{(n)}_{i+1}+t^n\delta^{(n)}_i\alpha^{(n)}_{i+1}=0.
\end{equation}
For $s=1,\cdots,r_i$, and for all $m_j\in M_j$, equations~\eqref{eq110427i}, \eqref{eq:ss3'}, \eqref{eq:ss3''}, and~\eqref{eq110420a} give the following equality:
\begin{equation}\label{eq110427n}
t^n\delta^{(n)}_{i+j}(\gamma_{i,s}m_j)=\gamma_{i,s}t^n\delta^{(n)}_{j}(m_j).
\end{equation}
Also, by equations~\eqref{eq110427j}, \eqref{eq:ss3''}, and~\eqref{eq110420a} we conclude that
\begin{equation}\label{eq110427o}
\alpha^{(n)}_{i+j}(\gamma_{i,s}m_j)=\partial^A_i(\gamma_{i,s})m_j+(-1)^i \gamma_{i,s}\alpha^{(n)}_j(m_j).
\end{equation}
Therefore, equations~\eqref{eq110427l}--\eqref{eq110427o} and Lemma \ref{lem110304a} imply that $N^{(n)}$ is a semi-free DG $B$-module.
Equations~\eqref{eq:ss3''}--\eqref{eq:ss4} and~\eqref{eq110420a} provide the next morphism of DG $B$-modules:
$$\xymatrix@C=3.7em@R=4em{
N^{(n-1)}=\cdots
\ar[r]&
M_{i-1}  \oplus M_{i} 
\ar[d]_{\left[\begin{smallmatrix}1 & -t^{n-1}z^{(n-1)}_{i} \\ 0 &
1\end{smallmatrix}\right]}\ar[rr]^{\left[\begin{smallmatrix}\!\!-\alpha^{(n-1)}_{i-1} 
& t^{n-1}\delta^{(n-1)}_{i}\!\! \\ t &
\alpha^{(n-1)}_{i} \end{smallmatrix}\right]}&&
M_{i-2}  \oplus  M_{i-1} 
\ar[r]\ar[d]^{\left[\begin{smallmatrix}1 & -t^{n-1}z^{(n-1)}_{i-1} \\ 0 &
1 \end{smallmatrix}\right]}&
\cdots \\
N^{(n)}=\cdots
\ar[r]&
M_{i-1}  \oplus M_{i} 
\ar[rr]^{\left[\begin{smallmatrix}\!\!-\alpha^{(n)}_{i-1} & t^{n}\delta^{(n)}_{i}\!\! \\ t &
\alpha^{(n)}_{i} \end{smallmatrix}\right]}&&
M_{i-2}  \oplus  M_{i-1} 
\ar[r]&
\cdots.
}$$
Similar reasoning shows that the sequence
$\left\{\left[
\begin{smallmatrix}
1 & t^{n-1}z^{(n-1)}_{i} \\
0&1
\end{smallmatrix}
\right]\right\}$
is a morphism of DG $B$-modules, and it is straightforward to show that these sequences are inverse isomorphisms.
\end{proof}

Part~\eqref{item120131a} of our Main Theorem is a consequence of the next result.

\begin{thm}\label{lifting theorem}
We work in the setting of Notations~\ref{A,B} and~\ref{notn110311a}.
Assume that $R$ is $tR$-adically complete and that $N$ is semi-free
such that its semi-basis over $B$ is finite in each degree. If $\Ext[B]2NN=0$, then $N$ is
liftable to $A$ with semi-free lifting.
\end{thm}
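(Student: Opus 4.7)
The plan is to iterate Lemma~\ref{lem120125a} to construct a sequence $\{N^{(n)}\}_{n\geq 0}$ of semi-free DG $B$-modules, each isomorphic to its predecessor, and then extract a semi-free DG $A$-module via $tR$-adic limits. Start with $N^{(0)}=N$, with its data $\alpha_i^{(0)}$ and $\delta_i^{(0)}$ coming from the presentation of Lemma~\ref{lem110304c}. The hypothesis $\Ext[B]2NN=0$ feeds the base case of Lemma~\ref{lem120125a} (the $n=1$ case, for which the auxiliary equations~\eqref{eq120710g}--\eqref{eq120710f} are vacuous), producing $N^{(1)}$ together with a DG $B$-isomorphism $N^{(0)}\cong N^{(1)}$. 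By Fact~\ref{fact110223a''}, $\Ext[B]2{N^{(1)}}{N^{(1)}}=0$ as well, so Lemma~\ref{lem120125a} applies again, this time with its full hypotheses~\eqref{eq120710g}--\eqref{eq120710f} furnished by the conclusion of the previous application. Induction then yields $N^{(n)}$ for all $n\geq 1$, with maps $\alpha_i^{(n)}$, $\delta_i^{(n)}$, $z_i^{(n-1)}$, $v_i^{(n-1)}$ satisfying the recursions $\alpha_i^{(n)}=\alpha_i^{(n-1)}+t^nz_i^{(n-1)}$ and $\delta_i^{(n)}=v_i^{(n-1)}-t^{n-1}z_{i-1}^{(n-1)}z_i^{(n-1)}$.

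The next step is to pass to the $tR$-adic limit. Because $N$ is bounded below with semi-basis finite in each degree, each $M_i$ is the direct sum of finitely many copies of $R$-modules of the form $A_j$, each free of finite rank, so $M_i$ is a finitely generated free $R$-module and hence $tR$-adically complete. The relation $\alpha_i^{(n)}-\alpha_i^{(n-1)}=t^nz_i^{(n-1)}$ shows that $\{\alpha_i^{(n)}\}_n$ is $tR$-adically Cauchy in $\Hom{M_i}{M_{i-1}}$, so it converges to some $\alpha_i^{(\infty)}\colon M_i\to M_{i-1}$. The identities from Lemma~\ref{lem110304a} give $\alpha_{i-1}^{(n)}\alpha_i^{(n)}=-t^{n+1}\delta_i^{(n)}$ and the Leibniz-type rule for $\alpha^{(n)}$; the first relation forces $\alpha_{i-1}^{(\infty)}\alpha_i^{(\infty)}=0$ in the limit, and the second passes to the limit verbatim. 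By Lemma~\ref{lem110408b}, the data $(M,\alpha^{(\infty)})$ defines a semi-free DG $A$-module, call it $M^{(\infty)}$. Applying Lemma~\ref{lem110408a}, the base change $B\otimes_AM^{(\infty)}$ is a semi-free DG $B$-module whose differential matches precisely the limit of the differentials $\partial^{N^{(n)}}$ (using that $t^n\delta_i^{(n)}\to 0$ in the $tR$-adic topology).

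Finally, to match $N$ itself with $B\otimes_A M^{(\infty)}$, compose the isomorphisms $N=N^{(0)}\xra{\simeq}N^{(1)}\xra{\simeq}N^{(2)}\xra{\simeq}\cdots$ supplied by Lemma~\ref{lem120125a}. In each bidegree, this is an infinite product of matrices of the shape $\left[\begin{smallmatrix}1&-t^{n-1}z_i^{(n-1)}\\ 0&1\end{smallmatrix}\right]$; each factor differs from the identity by a $t^{n-1}$-multiple, so partial products are $tR$-adically Cauchy and converge, by completeness of $M_i$, to an $R$-linear map. The result is a DG $B$-module morphism $N\to B\otimes_AM^{(\infty)}$ (the DG and $B$-linearity conditions, which are $tR$-adically closed, pass to the limit), and it is an isomorphism because each factor is. This exhibits $M^{(\infty)}$ as a semi-free lifting of $N$ to $A$.

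The core content already sits in Lemma~\ref{lem120125a}; the main obstacle here is the bookkeeping around $tR$-adic convergence: verifying that $M_i$ and $\Hom{M_i}{M_{i-1}}$ are $tR$-adically complete (which requires the semi-basis to be finite in each degree, together with boundedness below), and checking that both the limit differential and the composed isomorphism remain DG $B$-linear in the limit. Once these closure-under-limit verifications are in place, the structural Lemmas~\ref{lem110408b}, \ref{lem110304a}, and~\ref{lem110408a} assemble the lift.
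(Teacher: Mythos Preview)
Your proposal is correct and follows essentially the same approach as the paper: iterate Lemma~\ref{lem120125a} starting from $N^{(0)}=N$, use $tR$-adic completeness of the finitely generated free $R$-modules $M_i$ to define $\alpha_i^{(\infty)}$ and the composite isomorphism, and then invoke Lemmas~\ref{lem110408b} and~\ref{lem110408a} to identify $B\otimes_A M^{(\infty)}$ with $N^{(\infty)}\cong N$. One minor sharpening: rather than ``it is an isomorphism because each factor is,'' the paper (and you could too) observes that the limit map in degree $i$ has the unipotent upper-triangular form $\left[\begin{smallmatrix}1&*\\0&1\end{smallmatrix}\right]$, so its inverse is explicit and is checked to be a DG $B$-morphism by the same limit arguments.
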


\begin{proof}
Set $N^{(0)}=N$.
Here, we use a natural variation  of Notation~\ref{notn110311a}; see equation~\eqref{eq110413ax}:
$$
N^{(0)}= \cdots\to
M_{i-1}
\oplus
M_i
\xra {\left[
\begin{smallmatrix}
-\alpha^{(0)}_{i-1} & \delta^{(0)}_i \\
t&\alpha^{(0)}_i
\end{smallmatrix}
\right]}
M_{i-2}
\oplus
M_{i-1}
\to \cdots.
$$
Because of our assumptions, each $M_i$ is a finitely generated free $R$-module.

Lemma~\ref{lem120125a} implies that
for each $n\geq 1$ there is a semi-free DG $B$-module $N^{(n)}$
and an isomorphism of DG $B$-modules
$$\xymatrix@C=3.7em@R=4em{
N^{(n-1)}=\cdots
\ar[r]&
M_{i-1} \oplus  M_{i}
\ar[d]_{\left[\begin{smallmatrix}1 & -t^{n-1}z^{(n-1)}_{i} \\ 0 &
1\end{smallmatrix}\right]}\ar[rr]^{\left[\begin{smallmatrix}\!\!-\alpha^{(n-1)}_{i-1} & t^{n-1}\delta^{(n-1)}_{i}\!\! \\ t &
\alpha^{(n-1)}_{i} \end{smallmatrix}\right]}&&
M_{i-2} \oplus M_{i-1}
\ar[r]\ar[d]^{\left[\begin{smallmatrix}1 & -t^{n-1}z^{(n-1)}_{i-1} \\ 0 &
1 \end{smallmatrix}\right]}&
\cdots \\
N^{(n)}=\cdots
\ar[r]&
M_{i-1} \oplus  M_{i}
\ar[rr]^{\left[\begin{smallmatrix}-\alpha^{(n)}_{i-1} & t^n\delta^{(n)}_{i} \\ t &
\alpha^{(n)}_{i} \end{smallmatrix}\right]}&&
M_{i-2} \oplus M_{i-1}
\ar[r]&
\cdots
}$$
such that for each index $i$ there are $R$-module homomorphisms 
$v^{(n-1)}_i\colon M_i\to M_{i-2}$ and $z^{(n-1)}_i\colon M_i\to M_{i-1}$
such that
\begin{gather}
\alpha^{(n)}_{i}=\alpha^{(n-1)}_{i}+t^{n}z^{(n-1)}_{i}
\label{eq120710n}
\\
\delta_i^{(n)}=v^{(n-1)}_i-t^{n-1}z^{(n-1)}_{i-1}z^{(n-1)}_{i}
\label{eq120710o}
\\
\label{eq120710p}
      v^{(n-1)}_{i+j}(\gamma_{i,s}m_j)=\gamma_{i,s}v^{(n-1)}_j(m_j)\\
      z^{(n-1)}_{i+j}(\gamma_{i,s}m_j)=(-1)^{i}\gamma_{i,s}z^{(n-1)}_j(m_j).     
\label{eq120710q}
\\
\label{eq120710r}
      \alpha^{(n-1)}_{i-2}z^{(n-1)}_{i-1}+z^{(n-1)}_{i-2}\alpha^{(n-1)}_{i-1}+tv^{(n-1)}_{i-1}=\delta^{(n-1)}_{i-1}\\
      \!\!\!\!-\alpha^{(n-1)}_{i-2}v^{(n-1)}_i+t^{n-1}\delta^{(n-1)}_{i-1}z^{(n-1)}_i-t^{n-1}z^{(n-1)}_{i-2}\delta^{(n-1)}_i+v^{(n-1)}_{i-1}\alpha^{(n-1)}_i=0.
\label{eq120710s}
\end{gather}
for $s=1,\cdots,r_i$,  for all $m_j\in M_j$, and for each integer $j$.
This follows by induction on $n$; note that this uses the
isomorphism $N^{(n-1)}\cong N$ in the induction step to conclude  that
$\Ext[B]2{N^{(n-1)}}{N^{(n-1)}}\cong \Ext[B]2NN=0$.

A straightforward calculation shows that
$$\prod_{j= 0}^{n-1}
\left[
\begin{smallmatrix}
1 & -t^{j}z^{(j)}_{i} \\
0&1
\end{smallmatrix}
\right]=
\left[
\begin{smallmatrix}
1 & -\sum_{j= 0}^{n-1}t^{j}z^{(j)}_{i} \\
0&1
\end{smallmatrix}
\right].
$$
Hence, the composite isomorphism $N^{(0)}\to N^{(n)}$ has the following form:
$$\xymatrix@C=3.7em@R=4em{
N^{(0)}=\cdots
\ar[r]&
M_{i-1} \oplus  M_{i}
\ar[d]_{\left[
\begin{smallmatrix}
1 & -\sum_{j= 0}^{n-1}t^{j}z^{(j)}_{i} \\
0&1
\end{smallmatrix}
\right]}\ar[rr]^{\left[\begin{smallmatrix}\!\!-\alpha^{(0)}_{i-1} & \delta^{(0)}_{i}\!\! \\ t &
\alpha^{(0)}_{i} \end{smallmatrix}\right]}&&
M_{i-2}  \oplus  M_{i-1} 
\ar[r]\ar[d]^{\left[
\begin{smallmatrix}
1 & -\sum_{j= 0}^{n-1}t^{j}z^{(j)}_{i} \\
0&1
\end{smallmatrix}
\right]}&
\cdots \\
N^{(n)}=\cdots
\ar[r]&
M_{i-1}  \oplus  M_{i}
\ar[rr]^{\left[\begin{smallmatrix}\!\!-\alpha^{(n)}_{i-1} & t^{n}\delta^{(n)}_{i}\!\! \\ t &
\alpha^{(n)}_{i} \end{smallmatrix}\right]}&&
M_{i-2}  \oplus  M_{i-1} 
\ar[r]&
\cdots.
}$$
Furthermore,  equation~\eqref{eq120710n} shows that
$$
\alpha^{(n)}_i=\alpha_{i}^{(0)}+\sum_{j= 0}^{n-1}t^{j+1}z^{(j)}_{i}.
$$
We now define $N^{(\infty)}$ as follows:
$$
N^{(\infty)}= \cdots\to
M_{i-1}
\oplus
M_i
\xra {\left[
\begin{smallmatrix}
-\alpha^{(\infty)}_{i-1}& 0 \\
t&\alpha^{(\infty)}_{i}
\end{smallmatrix}
\right]}
M_{i-2}
\oplus
M_{i-1}
\to \cdots
$$
where
$$
\alpha^{(\infty)}_i=\alpha_{i}^{(0)}+\sum_{j= 0}^{\infty}t^{j+1}z^{(j)}_{i}.
$$
Note that $\alpha^{(\infty)}_i$ is well-defined because $R$ is $tR$-adically complete and the modules $M_i$  and $M_{i-1}$ are finitely generated free $R$-modules .

We claim  that $N^{(\infty)}$ is a semi-free DG $B$-module.
For all indices $i$ and $n$, set
\begin{equation}\label{eq110427p}
\zeta_i^{(n)}=\sum_{j=0}^\infty t^jz_i^{(j+n)}
\end{equation}
and notice that
\begin{equation}\label{eq110427q}
\alpha_i^{(\infty)}=\alpha_{i}^{(n)}+t^{n+1}\zeta_i^{(n)}.
\end{equation}
Using~\eqref{eq110427l}, it follows that
\begin{equation}\label{eq110427r}
\alpha_i^{(\infty)}\alpha_{i+1}^{(\infty)}
=t^{n+1}(-\delta_{i+1}^{(n)}+\alpha_{i}^{(n)}\zeta_{i+1}^{(n)}+\zeta_{i}^{(n)}\alpha_{i+1}^{(n)}+t^{n+1}\zeta_{i}^{(n)}\zeta_{i+1}^{(n)}).
\end{equation}
It follows that
$\alpha_i^{(\infty)}\alpha_{i+1}^{(\infty)}\in\bigcap_{n=1}^\infty t^{n+1}\Hom{M_i}{M_{i-1}}=0$,
by Krull's Intersection Theorem,
so we have
\begin{equation}\label{eq110427s}
\alpha_i^{(\infty)}\alpha_{i+1}^{(\infty)}=0.
\end{equation}
Furthermore, for $s=1,\cdots,r_i$ and for all $m_j\in M_j$ equations~\eqref{eq110427o}, \eqref{eq120710q}, \eqref{eq110427p}, and \eqref{eq110427q} imply that
\begin{equation}\label{eq110427t}
\alpha_{i+j}^{(\infty)}(\gamma_{i,s}m_j)=\partial^A_i(\gamma_{i,s})m_j+(-1)^i\gamma_{i,s}\alpha_{j}^{(\infty)}(m_j).
\end{equation}
Thus by equations~\eqref{eq110427s}, \eqref{eq110427t} and Lemma \ref{lem110304a} we conclude that $N^{(\infty)}$ is a DG $B$-module.

Now consider the chain map
$\varphi=\{\varphi_i\}\colon N^{(0)}\to N^{(\infty)}$ defined by
$$\varphi_i=\prod_{j=0}^{\infty}
\left[
\begin{smallmatrix}
1 & -t^{j}z^{(j)}_{i} \\
0&1
\end{smallmatrix}
\right]
=
\left[
\begin{smallmatrix}
1 & -\sum_{j=0}^{\infty}t^{j}z^{(j)}_{i} \\
0&1
\end{smallmatrix}
\right]
$$
This map is well-defined because $R$ is $tR$-adically complete and $N_i$ is finitely generated over $R$.
Using these assumptions with equation~\eqref{eq120710q} we conclude that
\begin{equation*} 
\textstyle
\left(-\sum_{l=0}^{\infty}t^{l}z^{(l)}_{i+j}\right)(\gamma_{i,s}m_j)=(-1)^{i}\gamma_{i,s}\left(-\sum_{l=0}^{\infty}t^{l}z^{(l)}_{j}\right)(m_j)
\end{equation*}
for $s=1,\cdots,r_i$, and for all $m_j\in M_j$ for each integer $j$.
Thus $\varphi$ is $B$-linear and satisfies the assumptions of Lemma~\ref{lem110413a}, so $\varphi$ is a morphism of DG $B$-modules.
Similar reasoning shows that the sequence
$\left\{\left[
\begin{smallmatrix}
1 & \sum_{j=0}^{\infty}t^{j}z^{(j)}_{i} \\
0&1
\end{smallmatrix}
\right]\right\}$
is a morphism of DG $B$-modules, and it is easy to show that these sequences are inverse isomorphisms.

On the other hand, by equations~\eqref{eq110427s}--\eqref{eq110427t} and Lemma \ref{lem110408b}, the sequence
$$
M^{(\infty)}=\cdots\to M_i\xra{\alpha_i^{(\infty)}}M_{i-1}\to \cdots
$$
is a semi-free DG $A$-module. Now Lemma \ref{lem110408a} implies that $M^{(\infty)}$ is a lifting of $N^{(\infty)}$ to $A$.
Since $N\cong N^{(\infty)}$ by definition, we conclude that $M^{(\infty)}$ is a lifting of $N$ to $A$, so $N$ is liftable to $A$.
\end{proof}

\begin{cor}\label{cor120125a}
We work in the setting of Notation~\ref{A,B}.
Assume that $R$ is $tR$-adically complete. Let $D$ be a 
DG $B$-module that is 
homologically both bounded below and 
 degreewise finite. If $\Ext[B]2DD=0$, then $D$ is
quasi-liftable to $A$.
\end{cor}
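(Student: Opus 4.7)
The plan is to reduce the statement directly to Theorem~\ref{lifting theorem} by replacing $D$ with a suitable semi-free resolution. The only real content of the corollary beyond the theorem is the passage from ``homologically bounded below and degreewise finite'' to a genuinely semi-free DG $B$-module with finite semi-basis in each degree.

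First, I would observe that $B$ is noetherian by Discussion~\ref{disc110311a}, since each $A_i$ is finitely generated free over $R$ forces $B$ to have the same property and $\HH_0(B)\cong\HH_0(A)/t\HH_0(A)$ is noetherian with each $\HH_i(B)$ finitely generated over it. Hence Fact~\ref{fact110218d} applies to $D$: because $D$ is homologically degreewise finite and $\HH_i(D)=0$ for $i<j$ for some integer $j$, there exists a semi-free resolution $\pi\colon N\xra{\simeq}D$ over $B$ such that $\und{N}\cong\bigoplus_{i=j}^\infty\shift^i(\und{B})^{\beta_i}$ with each $\beta_i$ a nonnegative integer and $N_i=0$ for $i<j$. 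In particular, the semi-basis of $N$ is finite in each degree, which is precisely the hypothesis on $N$ required by Theorem~\ref{lifting theorem}.

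Next, by Fact~\ref{fact110223a''} the quasiisomorphism $N\simeq D$ yields
\[
\Ext[B]{2}{N}{N}\cong\Ext[B]{2}{D}{D}=0.
\]
Thus Theorem~\ref{lifting theorem} applies to $N$, producing a semi-free DG $A$-module $D'$ (the module $M^{(\infty)}$ constructed in the proof of that theorem) together with an isomorphism $B\otimes_A D'\cong N$ of DG $B$-modules.

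Finally, stringing the quasiisomorphism and isomorphism together gives $D\simeq N\cong B\otimes_A D'$, which by Definition~\ref{lifting def} exhibits $D'$ as a quasi-lifting of $D$ to $A$. There is no genuine obstacle here beyond invoking the correct existence of a degreewise-finite bounded-below semi-free resolution; the technical heart of the argument, namely the inductive construction and the $tR$-adic convergence, is already carried out in Theorem~\ref{lifting theorem}.
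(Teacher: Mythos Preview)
Your proof is correct and follows essentially the same route as the paper: take a degreewise-finite bounded-below semi-free resolution $N\xra{\simeq}D$ via Fact~\ref{fact110218d}, transfer the vanishing of $\Ext^2$ along the quasiisomorphism, and then apply Theorem~\ref{lifting theorem}. You add a bit more justification than the paper does (noting explicitly that $B$ is noetherian and citing Fact~\ref{fact110223a''} for the $\Ext$ isomorphism), but the argument is the same.
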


\begin{proof}
Fact~\ref{fact110218d} 
implies that $D$ has a semi-free resolution
$N\simeq D$ over $B$
such  that the semi-basis for $N$ is finite in each degree.
Since $\Ext[B]2NN\cong \Ext[B]2DD=0$, 
Theorem~\ref{lifting theorem}
implies that $N$ is liftable to $A$, 
with semi-free lifting $M$.
Thus, we have $\Otimes[A]BM\cong N\simeq D$,
so $D$ is quasi-liftable to $A$.
\end{proof}

The proof of our Main Theorem uses induction on $n$, the length of the sequence $\ul t$.
The next result is useful for the induction step in this proof.

\begin{prop}\label{vanishing}
We work in the setting of Notation~\ref{A,B}. 
Assume that $R$ is $tR$-adically complete, and let $D$ be a
DG $B$-module that is homologically bounded below and homologically degreewise finite such that $\Ext[B]{d}DD=0$ for some  integer $d$.
If $M$ is a quasi-lifting of $D$ to $A$, then
$\Ext[A]dMM=0$.
\end{prop}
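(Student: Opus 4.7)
The plan is to relate $\Rhom[B]{D}{D}$ and $\Rhom[A]{M}{M}$ through the Koszul structure $B\cong K^R(t)\otimes_R A$, and then to exploit the $tR$-adic completeness of $R$ via a Nakayama-type argument.

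First, combining the isomorphism $B\otimes_A M\cong K^R(t)\otimes_R M$ (Lemma~\ref{lem110408a}) with the tensor--Hom adjunction $\Rhom[B]{B\otimes_A M}{-}\simeq \Rhom[A]{M}{-}$ and the projection formula $\Rhom[A]{M}{K^R(t)\otimes_R M}\simeq K^R(t)\otimes_R \Rhom[A]{M}{M}$ (valid because $K^R(t)$ is a bounded complex of finitely generated free $R$-modules), I obtain a quasi-isomorphism
\[
\Rhom[B]{D}{D}\simeq K^R(t)\otimes_R \Rhom[A]{M}{M}.
\]

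Next, I tensor the short exact sequence $0\to R\to K^R(t)\to \shift R\to 0$ of $R$-complexes with $\Rhom[A]{M}{M}$ and take the associated long exact sequence in cohomology:
\[
\cdots\to\Ext[A]{d}{M}{M}\xra{\pm t}\Ext[A]{d}{M}{M}\to \Ext[B]{d}{D}{D}\to \Ext[A]{d+1}{M}{M}\xra{\pm t}\Ext[A]{d+1}{M}{M}\to\cdots,
\]
where the self-maps are multiplication by $t$ (up to sign). The hypothesis $\Ext[B]{d}{D}{D}=0$ forces $t$ to act surjectively on $\Ext[A]{d}{M}{M}$, whence $\Ext[A]{d}{M}{M}=t^n\Ext[A]{d}{M}{M}$ for every $n\geq 1$.

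Finally, since $R$ is $tR$-adically complete, $t$ lies in the Jacobson radical of $R$, so Nakayama's lemma applies to every finitely generated $R$-module. Using invariance of $\Ext$ under quasi-isomorphism (Fact~\ref{fact110223a''}) together with Fact~\ref{fact110218d}, one reduces to the case where $M$ is bounded below semi-free with semi-basis finite in each degree; for such an $M$, each term $\Hom[A]{M}{M}_k$ is a product of finitely generated free $R$-modules, hence $tR$-adically complete. A Nakayama-type argument then converts the equality $\Ext[A]{d}{M}{M}=t\Ext[A]{d}{M}{M}$ into $\Ext[A]{d}{M}{M}=0$. The main obstacle will be precisely this last step: justifying the reduction to a ``nice'' quasi-lift (since distinct quasi-lifts of $D$ need not be quasi-isomorphic as DG $A$-modules without the uniqueness provided by Main Theorem (b)) and verifying that $tR$-adic completeness of the Hom complex descends to its cohomology in a form that makes Nakayama applicable.
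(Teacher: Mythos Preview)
Your approach is correct and more conceptual than the paper's. The paper argues by explicit matrix manipulation: given a degree-$(-d)$ cycle $f\in\Hom[A]{M}{M}$, it lifts $f$ to the diagonal cycle $\left[\begin{smallmatrix}(-1)^d f&0\\0&f\end{smallmatrix}\right]$ in $\Hom[B]{N}{N}$ (where $N=B\otimes_AM$), uses $\Ext[B]{d}{D}{D}=0$ to produce a $B$-null-homotopy, and reads off from the matrix form of that homotopy (via Lemma~\ref{lem110413a}) an equation $f=\partial^{\Hom[A]{M}{M}}(z)+tv$ with $v$ again a cycle. Your long exact sequence packages exactly this step---the equality $[f]=t[v]$ in $\Ext[A]{d}{M}{M}$---without the bookkeeping of Lemmas~\ref{lem110408d} and~\ref{lem110413a}. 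What you gain is transparency; what the paper gains is that the homotopies $z^{(n)}$ are explicit, so the passage to the limit is already written out.

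Your first worry is misplaced. You are not choosing a different quasi-lift of $D$; you are replacing the \emph{given} semi-free $M$ by an $A$-quasi-isomorphic degreewise-finite semi-free $M'$. Since $\Ext$ is invariant under $A$-quasi-isomorphism and $B\otimes_A(-)$ preserves quasi-isomorphisms between semi-free DG $A$-modules, $M'$ is again a quasi-lift and nothing about Main Theorem~(b) is needed. The only genuine issue---which the paper also passes over when it invokes Fact~\ref{fact110218d}---is whether the given $M$ is homologically degreewise finite so that such an $M'$ exists; in all the paper's applications this holds by construction.

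Your second worry is the real content, and it is resolved by precisely the iterative argument the paper carries out. Completeness of $\Hom[A]{M}{M}_k$ need not pass to the subquotient $\Ext[A]{d}{M}{M}$, but you do not need that. From $[f]=t[f_1]$ write $f=tf_1+\partial g_0$ with $f_1$ a cycle, iterate to $f=t^{n+1}f_n+\partial\bigl(\sum_{j=0}^{n}t^jg_j\bigr)$, and set $g=\sum_{j\ge 0}t^jg_j$, which converges because $\Hom[A]{M}{M}_{1-d}$ is a product of finitely generated free $R$-modules and hence $t$-adically complete. Then $f-\partial g$ lies in $\bigcap_n t^n\Hom[A]{M}{M}_{-d}=0$ by separatedness of that same product. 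This is exactly the paper's endgame, stated abstractly; so your ``Nakayama-type argument'' and the paper's limit computation are one and the same.
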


\begin{proof}
Assume without loss of generality  that 
$M$ is degreewise finite and bounded below; see
Fact~\ref{fact110218d}.  Lemma~\ref{lem110429a} shows that $M$ has the shape dictated by Notation~\ref{notn110311a}.
Since $M$ is a quasi-lifting of $D$ to $A$, we see that $N=\Otimes[A]BM\cong K^R(t)\otimes_R M$ is a semi-free resolution of $D$ over $B$; see Lemma~\ref{lem110408a}.
To show that $\Ext[A]dMM=0$,
let $f =\{f_i\colon M_i\to M_{i-d}\}$ be a cycle in $\Hom[A]MM_{-d}$;
we need to show that $f$ is null-homotopic.
The fact that $f$ is a cycle says that
for every $i$ we have
$f_i\alpha_{i+1}=(-1)^d\alpha_{i+1-d}f_{i+1}$.
For each $i$ set $v^{(-1)}_i=f_i$

Claim: For all $n\geq 0$ and for all $i\in\bbz$, there are maps $v^{(n)}_i\colon M_i\to M_{i-d}$ and $z^{(n)}_i\colon M_i\to M_{i+1-d}$ such that for $s=1,\cdots,r_i$, and for all $m_j\in M_j$ for each  $j$
\begin{gather}
\label{eq:p1'''}
      v^{(n)}_{i+j}(\gamma_{i,s}m_j)=(-1)^{-id}\gamma_{i,s}v^{(n)}_j(m_j)\\
      z^{(n)}_{i+j}(\gamma_{i,s}m_j)=(-1)^{i(1-d)}\gamma_{i,s}z^{(n)}_j(m_j)     \label{eq:p2}\\
    \label{eq:p3'}
      (-1)^dz^{(n)}_{i-2}\alpha_{i-1}+tv^{(n)}_{i-1}+\alpha_{i-d}z^{(n)}_{i-1}=v^{(n-1)}_{i-1}\\
      (-1)^dv^{(n)}_{i-1}\alpha_{i}-\alpha_{i-d}v_i^{(n)}=0. \label{eq:p4'}
\end{gather}

To prove the claim, we proceed by induction on $n$.
We verify the base case and the inductive step simultaneously.
Let $n\geq 0$ and assume that for each $i$ there exists $v^{(n-1)}_i\colon M_i\to M_{i-d}$ such that for $s=1,\cdots,r_i$, and for all $m_j\in M_j$ for each integer $j$,
we have
\begin{gather}
\label{eq:p1''''}
      v^{(n-1)}_{i+j}(\gamma_{i,s}m_j)=(-1)^{id}\gamma_{i,s}v^{(n-1)}_j(m_j) \\
      v^{(n-1)}_{i-1}\alpha_{i}-(-1)^d\alpha_{i-d}v_i^{(n-1)}=0. \label{eq:p4''}
\end{gather}
Thus, the sequence
$\left\{\left[\begin{smallmatrix}(-1)^dv^{(n-1)}_{i-1} & 0 \\ 0 &
v^{(n-1)}_{i}\end{smallmatrix}\right]
\colon M_{i-1}  \oplus  M_{i} 
\to M_{i-d-1}  \oplus  M_{i-d} 
\right\}$ is a cycle in $\Hom[B]NN_{-d}$,
by Lemma~\ref{lem110413a}.
As $\Ext[B]{d}DD=0$, this morphism is null-homotopic.
Thus there exists a DG $B$-module homomorphism $S^{(n)}=\{S_i^{(n)}\}\in\Hom[B]NN_{1-d}$
such that for every $i$ we have
\begin{equation}\label{eq:p1''}
\left[\begin{smallmatrix}-\alpha_{i-d} & 0 \\ t &
\alpha_{i-d+1} \end{smallmatrix}\right]S_i^{(n)}-(-1)^{1-d}S_{i-1}^{(n)}\left[\begin{smallmatrix}-\alpha_{i-1} & 0 \\ t &
\alpha_{i} \end{smallmatrix}\right]=\left[\begin{smallmatrix}(-1)^dv^{(n-1)}_{i-1} & 0 \\ 0 &
v^{(n-1)}_{i} \end{smallmatrix}\right].
\end{equation}
Lemma~\ref{lem110413a} implies that each $S^{(n)}_i$
is of the form
$$
S_i^{(n)}=\left[\begin{smallmatrix}(-1)^{1-d}z^{(n)}_{i-1} & v_{i}^{(n)} \\ 0 &
z^{(n)}_{i} \end{smallmatrix}\right]:N_i\to N_{i-d+1}
$$
where $v^{(n)}_i\colon M_i\to M_{i-d}$ and $z^{(n)}_i\colon M_i\to M_{i+1-d}$, and for $s=1,\cdots,r_i$, and for all $m_j\in M_j$ for each integer $j$ we have
\begin{align}
\label{eq:p5}
      v^{(n)}_{i+j}(\gamma_{i,s}m_j)&=(-1)^{-id}\gamma_{i,s}v^{(n)}_j(m_j)\\
      z^{(n)}_{i+j}(\gamma_{i,s}m_j)&=(-1)^{i(1-d)}\gamma_{i,s}z^{(n)}_j(m_j).     \label{eq:p6}
\end{align}
Hence for each $i$, equation~\eqref{eq:p1''} implies that we have
\begin{align}
    \label{eq:p7}
      (-1)^dz^{(n)}_{i-2}\alpha_{i-1}+tv^{(n)}_{i-1}+\alpha_{i-d}z^{(n)}_{i-1}&=v^{(n-1)}_{i-1}\\
      (-1)^dv^{(n)}_{i-1}\alpha_{i}-\alpha_{i-d}v_i^{(n)}&=0. \label{eq:p8}
\end{align}
This completes the proof of the claim.

Equation~\eqref{eq:p3'} implies the following equality for each $i$:
$$
f_{i}=(-1)^d\Bigg[\sum_{j=0}^{n}t^{j}z^{(j)}_{i-1}\Bigg]\alpha_{i}+\alpha_{i+1-d}\Bigg[\sum_{j=0}^{n}t^{j}z^{(j)}_{i}\Bigg]+t^{n+1}v_{i}^{(n)}.
$$
Since $R$ is $tR$-adically complete and each $M_i$ is finitely generated over $R$, each series $\eta_i=\sum_{j=0}^{\infty}t^{j}z^{(j)}_{i}$
converges in $\Hom{M_i}{M_{i+1-d}}$, and for every $i$ we have
\begin{equation}\label{eq110503a}
f_{i}=(-1)^d\eta_{i-1}\alpha_{i}+\alpha_{i+1-d}\eta_i.
\end{equation}
By equation~\eqref{eq:p2}, we conclude that
$\eta_{i+j}(\gamma_{i,s}m_j)=(-1)^{i(1-d)}\gamma_{i,s}\eta_j(m_j)$
for all $i,j,s$.
Thus, Lemma~\ref{lem110408d} implies that $\eta=\{\eta_i\}\in\Hom[A]MM$ is a DG $A$-module homomorphism of degree
$1-d$. Equation~\eqref{eq110503a} implies that $f=\{f_i\}$ is null-homotopic, as desired.
\end{proof}

The next result contains part~\eqref{item120131a} of our Main Theorem.

\begin{cor}\label{cor110513a}
Let $\underline{t}=t_1,\cdots,t_n$ be a sequence of elements of $R$, and
assume that $R$ is $\underline tR$-adically  complete.
Let $D$ be a
DG $K^R(\ul t)$-module that is homologically bounded below and homologically degreewise finite. If $\Ext[K^R(\ul t)]{2}DD=0$,
then $D$ is quasi-liftable to $R$.
\end{cor}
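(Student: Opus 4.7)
My plan is to prove this by induction on $n$, the length of the sequence $\ul t$, using Corollary~\ref{cor120125a} and Proposition~\ref{vanishing} as the main workhorses.

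\textbf{Base case $n=1$.} Here $K^R(\ul t) = K^R(t_1)$ and the statement is a direct application of Corollary~\ref{cor120125a} with $A=R$ and $t=t_1$. (Note that as a DG $R$-algebra, $R$ itself satisfies the hypotheses of Notation~\ref{A,B} trivially, with $A_0=R$ and $A_i=0$ for $i>0$.)

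\textbf{Inductive step.} Assume the statement holds for sequences of length $n-1$, and suppose $\ul t = t_1,\ldots,t_n$ is given. Set $A = K^R(t_1,\ldots,t_{n-1})$, which is a DG $R$-algebra with each $A_i$ free of finite rank over $R$. Using the standard decomposition of the Koszul complex, we have an isomorphism of DG $R$-algebras $K^R(\ul t) \cong K^R(t_n)\otimes_R A = B$, putting us in the setting of Notation~\ref{A,B} with $t=t_n$. Since $R$ is $\ul tR$-adically complete, $R$ is in particular $t_nR$-adically complete, so Corollary~\ref{cor120125a} applies to produce a semi-free DG $A$-module $M$ with $D\simeq B\otimes_A M$; moreover, Fact~\ref{fact110218d} allows us to take $M$ to be degreewise finite and bounded below as a semi-free DG $A$-module. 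By Proposition~\ref{vanishing} applied with $d=2$, the hypothesis $\Ext[B]{2}{D}{D}=0$ gives $\Ext[A]{2}{M}{M}=0$.

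Now I would verify that $M$ satisfies the hypotheses needed to apply the inductive hypothesis over $K^R(t_1,\ldots,t_{n-1})=A$: $M$ is bounded below by construction, and each $\HH_i(M)$ is a subquotient of the finitely generated $R$-module $M_i$, hence finitely generated over $\HH_0(A)$. Since $R$ is also $(t_1,\ldots,t_{n-1})R$-adically complete, the inductive hypothesis applies to $M$ and yields a semi-free $R$-complex $M'$ such that $M \simeq A \otimes_R M' = K^R(t_1,\ldots,t_{n-1})\otimes_R M'$. Tensoring this quasi-isomorphism over $A$ with the semi-free DG $A$-module $B = K^R(t_n)\otimes_R A$ preserves quasi-isomorphisms, so
\[
D \;\simeq\; B\otimes_A M \;\simeq\; B\otimes_A (A\otimes_R M') \;\cong\; B\otimes_R M' \;\cong\; K^R(\ul t)\otimes_R M',
\]
which shows that $M'$ is a quasi-lifting of $D$ to $R$.

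\textbf{Anticipated difficulties.} The heavy technical content has been absorbed into Theorem~\ref{lifting theorem} and Proposition~\ref{vanishing}, so this proof is mainly a careful bookkeeping argument. The two points that require care are (i) deducing $t_iR$-adic and $(t_1,\ldots,t_{n-1})R$-adic completeness of $R$ from $\ul tR$-adic completeness (a standard fact for noetherian rings with finitely generated ideals), and (ii) confirming that $M$ satisfies the finiteness and boundedness hypotheses needed at each stage of the induction; both are routine once the setup of the induction is in place.
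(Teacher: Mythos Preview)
Your proposal is correct and matches the paper's approach exactly: the paper's proof is the single line ``By induction on $n$, using Corollary~\ref{cor120125a} and Proposition~\ref{vanishing},'' and you have simply unpacked that induction carefully. One minor remark: the cleanest way to guarantee that the quasi-lift $M$ is bounded below with degreewise finite semi-basis is not via Fact~\ref{fact110218d} but by observing that the lift produced in the proof of Theorem~\ref{lifting theorem} (and hence Corollary~\ref{cor120125a}) already has this form by construction.
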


\begin{proof}
By induction on $n$, using
Corollary~\ref{cor120125a} and Proposition~\ref{vanishing}.
\end{proof}

The version of our Main Theorem  used in~\cite{nasseh:lrfsdc} 
requires a bit more terminology.

\begin{defn}
\label{dfn:DGsdm}
Let $A$ be a DG $R$-algebra, and let $M$ be a DG $A$-module.
For each $a\in A$ the \emph{homothety}
(i.e., multiplication map) $\mult Ma\colon M\to M$
given by $m\mapsto am$
is a homomorphism of degree $|a|$.
The \emph{homothety morphism} $X^A_M\colon A\to \HomA MM$ is given by
$X^A_M(a)=\mult Ma$, i.e., $X^A_M(a)(m)=am$.

If $A$ is noetherian and $M$ is semi-free, then  $M$ is a \emph{semidualizing}
DG $A$-module if $M$ is homologically finite
and the homothety morphism
$X^A_M\colon A\to\Hom[A]{M}{M}$ is a quasiisomorphism.
We say that an $R$-complex is \emph{semidualizing} provided that it is
semidualizing as a DG $R$-module.
Let $\s(A)$ be the set of shift-quasiisomorphism classes of semidualizing DG $A$-modules,
that is, the set of equivalence classes of semidualizing DG $A$-modules where $C\sim C'$ if there is an
integer $n$ such that $C'\simeq\shift^nC$.
\end{defn}

\begin{disc}
Let $A$ be a DG $R$-algebra, and let $M$, $M'$ be  
semi-free DG $A$-modules.
It is straightforward to show that
if $M\sim M'$, then $M$ is semidualizing if and only if $M'$ is semidualizing.
Note that our semi-free assumption in the definition of semidualizing
is only made in order to avoid the need for the derived category $\catd(A)$.
If one prefers to work in $\catd(A)$, then a homologically finite DG $A$-module
$N$ is semidualizing if (and only if) the induced map
$\chi_N^A\colon A\to\Rhom[A]NN$ is an isomorphism in $\catd(A)$.
\end{disc}

\begin{cor}\label{lifting to R}
Let $\underline{t}=t_1,\cdots,t_n$ be a sequence of elements of $R$, and
assume that $R$ is $\underline tR$-adically  complete.
If $D$ is a semidualizing DG $K^R(\underline{t})$-module, then there
exists a semidualizing $R$-complex $C$ which is a quasi-lifting of $D$ to $R$.
Moreover, the base-change operation $C\mapsto\Otimes{K^R(\underline t)}C$
induces a bijection $\s(R)\xra\cong\s(K^R(\underline t))$.
\end{cor}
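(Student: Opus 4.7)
The plan is to combine Corollary~\ref{cor110513a} (existence of quasi-lifts) with the fact that semidualizing complexes are characterized by a homothety quasi-isomorphism, and then use $(\underline t)$-adic completeness to descend information from $K=K^R(\underline t)$ down to $R$. Injectivity in the bijection will come from the uniqueness half of the Main Theorem.

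\textbf{Existence.} Since $D$ is semidualizing, the homothety $X^K_D\colon K\to\Hom[K]DD$ is a quasiisomorphism. Because $K$ is concentrated in non-negative degrees, this forces $\Ext[K]iDD=0$ for all $i\geq 1$; in particular $\Ext[K]2DD=0$. As $D$ is also homologically bounded below and degreewise finite, Corollary~\ref{cor110513a} yields a semi-free $R$-complex $C$ with $K\otimes_R C\simeq D$.

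\textbf{$C$ is semidualizing.} Since $K$ is a bounded complex of finitely generated free $R$-modules, tensor-hom adjunction gives a chain of quasi-isomorphisms
\[
K\otimes_R\Hom[R]CC \;\simeq\; \Hom[K]{K\otimes_R C}{K\otimes_R C} \;\simeq\; \Hom[K]DD
\]
under which $K\otimes_R X^R_C$ corresponds to $X^K_D$. Thus $K\otimes_R X^R_C$ is a quasi-iso, so setting $Y=\cone(X^R_C)$ gives $K\otimes_R Y\simeq 0$. A derived-Nakayama argument exploiting $(\underline t)$-adic completeness of $R$ and the degreewise finiteness of $C$ forces $Y\simeq 0$, whence $X^R_C$ itself is a quasiisomorphism. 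An analogous descent, starting from finiteness of $\HH_\ast(K\otimes_R C)$ over $\HH_0(K)=R/(\underline t)R$, shows $\HH_\ast(C)$ is finite over $R$. Hence $C\in\s(R)$.

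\textbf{Bijection and main obstacle.} The assignment $C\mapsto K\otimes_R C$ is well-defined $\s(R)\to\s(K)$, since $X^K_{K\otimes_R C}$ is identified with $K\otimes_R X^R_C$ (a quasi-iso whenever $X^R_C$ is) and homological finiteness is preserved by tensoring with the bounded free complex $K$. Surjectivity is the content of the previous two steps. For injectivity, if $K\otimes_R C\sim K\otimes_R C'$ in $\s(K)$, then after an appropriate shift we may assume $K\otimes_R C\simeq K\otimes_R C'$; the semidualizing hypothesis again gives $\Ext[K]{1}{K\otimes_R C}{K\otimes_R C}=0$, so the uniqueness half of our Main Theorem (Corollary~\ref{cor110516a}) yields $C\simeq C'$ over $R$, with the local hypothesis there handled by localizing at each maximal ideal of $R$. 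The technical heart of the argument is the descent in Step~2: passing from $K\otimes_R Y\simeq 0$ and from finiteness of $\HH_\ast(K\otimes_R C)$ over $\HH_0(K)$ down to the corresponding statements for $Y$ and $C$; both descents rely essentially on $(\underline t)$-adic completeness of $R$ via Nakayama-type reasoning applied to the degreewise-finite homology of $Y$ and of $C$.
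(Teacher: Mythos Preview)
Your existence step via Corollary~\ref{cor110513a} matches the paper exactly. The paper, however, does \emph{not} carry out Steps~2 and~3 directly: it cites \cite[Lemma~A.3]{christensen:dvke} for both the descent of the semidualizing property and the injectivity of the base-change map. Your derived-Nakayama sketch in Step~2 is essentially a reconstruction of that lemma and is sound in outline; one small ordering issue is that you should first establish homological boundedness of $C$ (via Nakayama on $\HH_\ast(C)$, using $K\otimes_R C\simeq D$) \emph{before} working with $\Hom[R]CC$, since otherwise the degree-$n$ piece of $\Hom[R]CC$ is an infinite product and you cannot appeal to finite generation.

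Your injectivity argument, on the other hand, has a genuine gap. Corollary~\ref{cor110516a} requires $R$ to be local, and your proposed fix---localizing at each maximal ideal---does not work: $(\underline t)$-adic completeness is generally \emph{not} preserved under localization at a maximal ideal, so you cannot invoke the corollary after localizing. Even if you could, knowing $C_{\m}\simeq C'_{\m}$ for every $\m$ does not by itself produce a global quasi-isomorphism $C\simeq C'$. The cleaner route (and presumably the content of the cited lemma) is to run the same Nakayama argument you used in Step~2 on $\Rhom{C}{C'}$: one has $K\otimes_R\Rhom{C}{C'}\simeq\Rhom[K]{K\otimes_R C}{K\otimes_R C'}\simeq K$, and descent then shows $\Rhom{C}{C'}\simeq R$ up to shift, yielding the desired quasi-isomorphism. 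This avoids any local hypothesis and keeps the argument parallel to your Step~2.
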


\begin{proof}
Note that the fact that $R$ is $\underline tR$-adically  complete implies that
$\underline tR$ is contained in the Jacobson radical of $R$.
Using this, one checks readily that the conclusions 
of~\cite[Lemma A.3]{christensen:dvke} hold in our setting.
The existence of an $R$-complex $C$ that is a quasi-lifting of $D$ to $R$ follows
from Corollary~\ref{cor110513a}; and $C$ is  semidualizing over $R$
by~\cite[Lemma A.3(a)]{christensen:dvke}. This says that the
base-change map $\s(R)\to\s(K^R(\underline t))$ is surjective;
it is injective by~\cite[Lemma A.3(b)]{christensen:dvke}.
\end{proof}

Part~\eqref{item120131b} of our Main Theorem is a consequence of the next result.

\begin{thm}\label{uniqueness}
We work in the setting of Notation~\ref{A,B}.
Assume that $R$ is $tR$-adically complete, and assume that 
$R$, $A$, and $A_0$ are local, and let $D$ be a
DG $B$-module that is homologically bounded below
and homologically degreewise finite. If $D$ is quasi-liftable
to $A$ and $\Ext[B]{1}DD=0$, then any two 
homologically degreewise finite quasi-lifts of $D$ to $A$ are quasiisomorphic
over $A$.
\end{thm}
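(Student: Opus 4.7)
The plan is to adapt the inductive ``lift-to-the-limit'' technique of Theorem~\ref{lifting theorem} to morphisms in order to upgrade a $B$-quasi-isomorphism between the quasi-lifts into an $A$-quasi-isomorphism. By Fact~\ref{fact110218d} and the local hypotheses, I would first replace $M$ and $M'$ with degreewise finite minimal semi-free DG $A$-modules in the shape of Notation~\ref{notn110311a}, with differentials $\alpha_i$ and $\alpha'_i$. Then $N:=\Otimes[A]BM$ and $N':=\Otimes[A]B{M'}$ are semi-free DG $B$-modules with block differentials governed by Lemma~\ref{lem110408a}, and since $N\simeq D\simeq N'$ with $N, N'$ semi-free, there is a $B$-quasi-isomorphism $\phi\colon N\to N'$. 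Applying Lemma~\ref{lem110413a} with $p=0$, each $\phi_i$ has the form $\left[\begin{smallmatrix}z_{i-1}&v_i\\ 0 & z_i\end{smallmatrix}\right]$, and comparing with the block differentials produces the relations
\begin{align*}
z_{i-1}\alpha_i-\alpha'_i z_i&=tv_i,\\
\alpha'_{i-1}v_i+v_{i-1}\alpha_i&=0.
\end{align*}
Combining the $A$-linearity identities in Lemma~\ref{lem110413a} with Lemma~\ref{lem110408d}, the sequences $z=\{z_i\}$ and $v=\{v_i\}$ are $A$-linear of degrees $0$ and $-1$; in particular, $v$ is a cycle in $\Hom[A]M{M'}_{-1}$, representing a class $[v]\in\Ext[A]1M{M'}$.

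Next, I would establish a mixed variant of Proposition~\ref{vanishing}: if $M$ and $M'$ are quasi-lifts of $D$ and $\Ext[B]dDD=0$, then $\Ext[A]dM{M'}=0$. The proof follows Proposition~\ref{vanishing} almost verbatim with the target $M$ replaced by $M'$ throughout; a cycle $f\in\Hom[A]M{M'}_{-d}$ induces a cycle in $\Hom[B]N{N'}$ which, under the identification $\Hom[B]N{N'}\simeq \Hom[B]DD$, is null-homotopic, and a $t$-adic limit argument (via $tR$-adic completeness of $R$ and degreewise finite generation of $M$, $M'$ over $R$) extracts an $A$-level null-homotopy from the successive block entries furnished by Lemma~\ref{lem110413a}. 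Applying this vanishing with $d=1$ to the cycle $v$ produces $w=\{w_i\}\in\Hom[A]M{M'}_0$ satisfying $v_i=\alpha'_iw_i-w_{i-1}\alpha_i$.

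Finally, I would set $z':=z+tw$; this is $A$-linear of degree $0$, and a direct computation using the two relations above gives
\begin{equation*}
z'_{i-1}\alpha_i-\alpha'_iz'_i=tv_i+t(w_{i-1}\alpha_i-\alpha'_iw_i)=0,
\end{equation*}
so $z'\colon M\to M'$ is a morphism of DG $A$-modules. To see that $z'$ is a quasi-isomorphism, I would compare $\Otimes[A]B{z'}$ (computed via Lemma~\ref{lem110429b}) with $\phi$: the difference $\phi-\Otimes[A]B{z'}$ has block form $\left[\begin{smallmatrix}-tw_{i-1}&v_i\\ 0 & -tw_i\end{smallmatrix}\right]$, which is the $B$-coboundary of the explicit degree-$1$ map $h_i=\left[\begin{smallmatrix}0&-w_i\\ 0 & 0\end{smallmatrix}\right]$ (the chain-homotopy identity for $h$ reduces precisely to the null-homotopy equation for $v$, and $A$-linearity of $h$ follows from Lemma~\ref{lem110413a} with $p=1$). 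Hence $\Otimes[A]B{z'}$ is a $B$-quasi-isomorphism, so $\Otimes[R]{K^R(t)}{\cone(z')}\cong \Otimes[A]B{\cone(z')}$ is acyclic; by the Koszul long exact sequence this forces $t$ to act invertibly on each $\HH_i(\cone(z'))$, and since $\cone(z')$ is homologically degreewise finite over the local ring $\HH_0(A)$ with $t\in\m_{\HH_0(A)}$, Nakayama's lemma yields $\HH_i(\cone(z'))=0$ for all $i$, completing the proof. The main obstacle is the mixed version of Proposition~\ref{vanishing}: its proof requires a careful replay of the inductive block-matrix constructions of Proposition~\ref{vanishing} with two distinct quasi-lifts appearing in the source and target, and checking that the $A$-linearity relations propagate correctly at each stage.
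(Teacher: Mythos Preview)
Your argument is correct and shares the paper's core strategy: from a $B$-level comparison map between $N=\Otimes[A]BM$ and $N'=\Otimes[A]B{M'}$ one extracts $z,v$ via Lemma~\ref{lem110413a}, uses the $\Ext^1$ vanishing together with $tR$-adic completeness to produce an $A$-level null-homotopy $w$ for $v$, and then checks that $z+tw$ is the desired $A$-comparison. The paper carries out exactly this, inlining the iteration that you package as a ``mixed'' version of Proposition~\ref{vanishing}; your $w$ is precisely the paper's limit $\xi=\sum_j t^j u^{(j)}$.

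Where you diverge is at both ends. The paper invokes \cite[Theorem~2.12.5.2]{avramov:dgha} to obtain an actual \emph{isomorphism} $\Upsilon\colon N\xra{\cong}N'$ between minimal semi-free resolutions; this makes each $z_i$ bijective, and then module-level Nakayama (using $t\in\m$) shows $z_i+t\xi_i$ is bijective, giving $M\cong M'$. You start only from a quasiisomorphism $\phi$, and instead verify that $\phi$ and $\Otimes[A]B{z'}$ are $B$-homotopic via the explicit $h_i=\left[\begin{smallmatrix}0&-w_i\\0&0\end{smallmatrix}\right]$, then conclude $z'$ is a quasiisomorphism via the Koszul long exact sequence and Nakayama on the $\HH_0(A)$-modules $\HH_i(\cone(z'))$. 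Your route is a bit more self-contained (it avoids the external uniqueness-of-minimal-resolutions input) and yields only $M\simeq M'$ rather than $M\cong M'$, which is all the statement requires; the paper's route gives the stronger isomorphism but leans on that reference. Note that for your argument minimality of $M,M'$ is not actually used---degreewise finite semi-free resolutions from Fact~\ref{fact110218d} suffice.
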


\begin{proof}
The assumption that $R$ is $tR$-adically complete and local implies that
$t$ is in the maximal ideal $\m\subset R$.

Let $C$ and $C'$ be two homologically degreewise finite semi-free DG $A$-modules such that $\Otimes[A]BC\simeq D\simeq \Otimes[A]BC'$. Let $M\xra\simeq C$ and $M'\xra\simeq C'$ be minimal
semi-free resolutions of $C$ and $C'$ over $A$. Lemma~\ref{lem110429a} shows that $M$ and $M'$ have the shape dictated by Notation~\ref{notn110311a}.
Since $C$ and $C'$ are quasi-liftings of $D$ to $A$, we see that 
$N:=\Otimes[A]BM\cong K^R(t)\otimes_R M$ and 
$N':=\Otimes[A]BM'\cong K^R(t)\otimes_R M'$ are
semi-free resolutions of $D$ over $B$; see Lemma~\ref{lem110408a}.
Furthermore, from Remark~\ref{disc110311a}, we have the isomorphism $B/\m_B\cong A/\m_A$, which implies that
$$\Otimes[B]{B/\m_B}{(\Otimes[A]{B}{M})}\cong \Otimes[A]{A/\m_A}{M}.$$
Since $M$ is minimal over $A$, the differential on this complex is 0, so $\Otimes[A]{B}{M}$ is minimal over $B$,
and similarly for $\Otimes[A]{B}{M'}$.

From~\cite[Theorem 2.12.5.2 and Example 2.12.5.4]{avramov:dgha} there exists  an isomorphism $\Upsilon\colon N\xra\cong N'$.
Lemma~\ref{lem110413a} implies that $\Upsilon$ has the following form
\begin{equation}\label{eq110516a}
\begin{split}
\xymatrix@C=3em@R=4em{
N=\cdots
\ar[r]&
M_{i-1}  \oplus  M_{i} 
\ar[d]_{\left[\begin{smallmatrix}z_{i-1}& v_{i} \\ 0 &
z_i\end{smallmatrix}\right]}\ar[rr]^{\left[\begin{smallmatrix}-\alpha_{i-1} & 0 \\ t &
\alpha_{i} \end{smallmatrix}\right]}&&
M_{i-2}  \oplus  M_{i-1} 
\ar[r]\ar[d]^{\left[\begin{smallmatrix}z_{i-2} & v_{i-1} \\ 0 &
z_{i-1} \end{smallmatrix}\right]}&
\cdots \\
N'=\cdots
\ar[r]&
M'_{i-1}  \oplus  M'_{i} 
\ar[rr]^{\left[\begin{smallmatrix}-\alpha'_{i-1} & 0 \\ t &
\alpha'_{i} \end{smallmatrix}\right]}&&
M'_{i-2}  \oplus  M'_{i-1} 
\ar[r]&
\cdots
}\end{split}\end{equation}
and that we have
\begin{align}
v_{i+j}(\gamma_{i,s}m_j)&=(-1)^i\gamma_{i,s}v_j(m_j) \label{eq110516b} \\
      z_{i+j}(\gamma_{i,s}m_j)&=\gamma_{i,s}z_j(m_j)     \label{eq110506b}
\end{align}
for all $i$, for $s=1,\ldots,r_i$ and for all $m_j\in M_j$ for each integer $j$.
As the diagram~\eqref{eq110516a} commutes, we have
\begin{align}
    \label{eq:u1}
      z_{i-1}\alpha_{i}&=tv_{i}+\alpha'_{i}z_{i}
\end{align}
for all $i$. Since $\Upsilon$ is an isomorphism, it follows  that the $z_i$'s are isomorphisms.

The condition~\eqref{eq110516b} implies that
$v=\{v_i\}\in\Hom[A]{M}{M'}_{-1}$; cf.~Lemma~\ref{lem110408d}. As the diagram~\eqref{eq110516a} commutes, we have
$v_{i-1}\alpha_{i}=-\alpha'_{i-1}v_i$
for all $i$, so $v$ is a cycle in $\Hom[A]M{M'}_{-1}$.
This yields a cycle $\Otimes[A]{B}{v}\in \Hom[B]{N}{N'}_{-1}$
which
has the form
$\left\{\left[\begin{smallmatrix}-v_{i-1} & 0 \\ 0 &
v_{i}\end{smallmatrix}\right]
\colon M_{i-1}  \oplus  M_{i} \to
M_{i-d-1}  \oplus  M_{i-d}\right\}$;
see Fact~\ref{fact110223a} and Lemma~\ref{lem110429b}.
Since $\Otimes[A]{B}{v}$ is a cycle,
our Ext-vanishing assumption
implies that  there is a DG $B$-module homomorphism
$$
T^{(0)}=\left\{\left[\begin{smallmatrix}u^{(0)}_{i-1}& p_i^{(0)} \\ 0 &
u^{(0)}_{i}\end{smallmatrix}\right]\right\}\in \Hom[B]{N}{N'}_{0}
$$
such that for every $i$ we have
$$
\left[\begin{smallmatrix}-\alpha'_{i-1}& 0 \\ t &
\alpha'_i\end{smallmatrix}\right]T^{(0)}_{i}-T^{(0)}_{i-1}\left[\begin{smallmatrix}-\alpha_{i-1}& 0 \\ t &
\alpha_i\end{smallmatrix}\right]=\left[\begin{smallmatrix}-v_{i-1}& 0 \\ 0 &
v_i\end{smallmatrix}\right].
$$
Therefore for all $i\in\bbz$  we obtain the following equations:
\begin{gather}
      -u_{i-1}^{(0)}\alpha_{i}+tp_{i}^{(0)}+\alpha'_{i}u_{i}^{(0)}=v_i \notag \\
      u_{i+j}^{(0)}(\gamma_{i,s}m_j)= \gamma_{i,s}u_{j}^{(0)}(m_j)\notag\\
p_{i+j}^{(0)}(\gamma_{i,s}m_j)=(-1)^i\gamma_{i,s}p_i^{(0)}(m_j)\notag \\
      p_{i-1}^{(0)}\alpha_{i}=-\alpha'_{i-1}p_i^{(0)}.\notag
\end{gather}

The process repeats using $p^{(0)}=\{p^{(0)}_i\}$ in place of $p^{(-1)}=v$.
Inductively, for each $n\geq 0$ one can construct
a DG $B$-module homomorphism
$$
T^{(n)}=\left\{\left[\begin{smallmatrix}u^{(n)}_{i-1}& p_i^{(n)} \\ 0 &
u^{(n)}_{i}\end{smallmatrix}\right]\right\}\in \Hom[B]{N}{N'}_{0}
$$
such that for every $i$ we have
$$
\left[\begin{smallmatrix}-\alpha'_{i-1}& 0 \\ t &
\alpha'_i\end{smallmatrix}\right]T^{(n)}_{i}-T^{(n)}_{i-1}\left[\begin{smallmatrix}-\alpha_{i-1}& 0 \\ t &
\alpha_i\end{smallmatrix}\right]=\left[\begin{smallmatrix}-p^{(n-1)}_{i-1}& 0 \\ 0 &
p_i^{(n-1)}\end{smallmatrix}\right].
$$
Therefore for all $i\in\bbz$ and $n\geq 0$ we get the following equations:
\begin{gather}
      -u_{i-1}^{(n)}\alpha_{i}+tp_{i}^{(n)}+\alpha'_{i}u_{i}^{(n)}=p_i^{(n-1)}
      \nonumber \\
      u_{i+j}^{(n)}(\gamma_{i,s}m_j)= \gamma_{i,s}u_{j}^{(n)}(m_j) \label{eq110506c}
\end{gather}
and hence
\begin{equation}
\label{eq110506a}
v_i=p^{(-1)}_i=\alpha'_i\Bigg[\sum_{j=0}^{n}t^ju_{i}^{(j)}\Bigg]+t^{n+1}p_i^{(n)}-\Bigg[\sum_{j=0}^{n}t^ju_{i-1}^{(j)}\Bigg]\alpha_{i}.
\end{equation}
Since $R$ is $tR$-adically complete, the next series converges for each $i$
$$
\xi_i=\sum_{j=0}^{\infty}t^{j}u_{i}^{(j)}
$$
and equation~\eqref{eq110506a} implies that
\begin{equation}
    \label{eq:u4}
      v_i=\alpha'_i\xi_{i}-\xi_{i-1}\alpha_{i}.
\end{equation}
Combining equations~\eqref{eq:u1} and~\eqref{eq:u4},  for each $i$ we have 
\begin{equation}
    \label{eq:u5}
      (z_{i-1}+t\xi_{i-1})\alpha_{i}=\alpha'_i(z_i+t\xi_i).
\end{equation}
 This shows that the sequence
 $z+t\xi\colon M\to M'$ is a degree-0 homomorphism of the underlying $R$-complexes.
 Combining equations~\eqref{eq110506b} and~\eqref{eq110506c}, we see that
 $$(z+t\xi)_{i+j}(\gamma_{i,s}m_j)=\gamma_{i,s}(z+t\xi)_{j}(m_j)$$
 for all $i$, for $s=1,\ldots,r_i$ and for all $m_j\in M_j$ for each  $j$.
So, Lemma~\ref{lem110408d} implies that $z+t\xi$ is a cycle in $\Hom[A]{M}{M'}_0$.
Since each $z_i$ is bijective
and $t\in \mathfrak m$, Nakayama's Lemma implies that for every $i$, the map $z_i+t\xi_i$ is also bijective.
Hence $z+t\xi$ is an isomorphism $M\xra\cong M'$, so $C\simeq M\cong M'\simeq C'$, as desired.
\end{proof}

Here is Main Theorem~\eqref{item120131b} from the introduction.

\begin{cor}\label{cor110516a}
Let $\underline{t}=t_1,\cdots,t_n$ be a sequence of elements of $R$, and
assume that $R$ is local and $\underline tR$-adically  complete.
Let $D$ be a
DG $K^R(\ul t)$-module that is homologically bounded below and homologically degreewise finite.
If $D$ is quasi-liftable
to $R$ and $\Ext[K^R(\ul t)]{1}DD=0$, then 
any two 
homologically degreewise finite quasi-lifts of $D$ to $R$ are quasiisomorphic
over $R$.
\end{cor}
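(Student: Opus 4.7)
The plan is to prove the corollary by induction on the length $n$ of the sequence $\ul t$. The base case $n = 1$ is precisely Theorem~\ref{uniqueness} applied in the setting of Notation~\ref{A,B} with $A = R$ (so $A_0 = R$ is local) and $t = t_1$, which makes $B = K^R(t_1)$; the hypothesis $\Ext[B]{1}DD = 0$ then yields the desired uniqueness directly.

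For the inductive step $n \geq 2$, the strategy is to peel off the last variable $t_n$ via Theorem~\ref{uniqueness} and then invoke the induction hypothesis on the shorter sequence $(t_1,\dots,t_{n-1})$. Set $A := K^R(t_1,\dots,t_{n-1})$ and $t := t_n$. Then $A$ is a local DG $R$-algebra whose components are free of finite rank over $R$, with $A_0 = R$ and $\HH_0(A) = R/(t_1,\dots,t_{n-1})$ both local; the canonical identification $K^R(\ul t) \cong K^R(t) \otimes_R A =: B$ places us in the framework of Notation~\ref{A,B}. Given two homologically degreewise finite quasi-lifts $C$ and $C'$ of $D$ to $R$, I form the intermediate base changes $\tilde C := A \otimes_R C$ and $\tilde C' := A \otimes_R C'$, each of which is a semi-free DG $A$-module (inheriting a semi-basis from that of $C$ and $C'$), and is homologically bounded below and degreewise finite because $A$ has finite-rank components bounded in both directions. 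By associativity of tensor products,
$$
B \otimes_A \tilde C \cong B \otimes_R C = K^R(\ul t) \otimes_R C \simeq D,
$$
and similarly for $\tilde C'$; hence $\tilde C$ and $\tilde C'$ are both quasi-lifts of $D$ from $B$ to $A$.

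Theorem~\ref{uniqueness}, applied at this level with $\Ext[B]{1}DD = 0$, yields $\tilde C \simeq \tilde C'$ over $A$. Next, Proposition~\ref{vanishing} (with $d = 1$) propagates the Ext-vanishing down to the intermediate level: $\Ext[A]{1}{\tilde C}{\tilde C} = 0$. Since $A \otimes_R C = \tilde C$ and $A \otimes_R C' = \tilde C' \simeq \tilde C$, the $R$-complexes $C$ and $C'$ are both homologically degreewise finite quasi-lifts of the single DG $A$-module $\tilde C$ from $A$ to $R$. Applying the induction hypothesis to the shorter sequence $(t_1,\dots,t_{n-1})$, with $\tilde C$ playing the role of $D$, then yields $C \simeq C'$ over $R$, completing the induction.

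The main subtlety I foresee lies in propagating the adic completeness hypothesis through the induction: Theorem~\ref{uniqueness} requires $R$ to be $t_nR$-adically complete, and the induction hypothesis requires $R$ to be $(t_1,\dots,t_{n-1})R$-adically complete, neither of which is a formal consequence of $\ul tR$-adic completeness alone. What rescues the argument is that the Cauchy series actually constructed in the proofs (e.g., $\sum_j t_n^{\,j} u_i^{(j)}$) have $j$-th term lying in $(t_n^{\,j})R \subseteq (\ul tR)^j$, so the given $\ul tR$-adic completeness of $R$ suffices to guarantee their convergence in the finitely generated free $R$-modules $\Hom{M_i}{M'_{i+1-d}}$, and analogously for the series appearing in the shorter-sequence induction step.
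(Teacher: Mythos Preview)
Your proof is correct and follows essentially the same inductive strategy the paper indicates. On the adic-completeness subtlety you flag: since $R$ is Noetherian and $\ul tR$-adically complete, it is automatically $J$-adically complete for every ideal $J\subseteq\ul tR$ (ideals are $\ul tR$-adically closed, so a $J$-Cauchy sequence has an $\ul tR$-limit lying in each $J^N$), and hence the hypotheses of Theorem~\ref{uniqueness}, Proposition~\ref{vanishing}, and the induction step hold directly without your workaround through the explicit series.
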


\begin{proof}
By induction on $n$, using Theorem \ref{uniqueness} and Proposition~\ref{vanishing}.
\end{proof}

We conclude the paper with an example showing that the quasi-lifts
in the previous two results must be homologically degreewise finite.

\begin{ex}\label{ex120126a}
Let $(R,\m)$ be a local integral domain that is not a field.
Let $Q(R)$ be the field of fractions of $R$, and let $0\neq t\in \m$.
If $F$ is an $R$-free resolution of $Q(R)$,
then $F$  and $0$ are both quasi-lifts of
$0$ from $K^R(t)$ to $R$.
\end{ex}


\providecommand{\bysame}{\leavevmode\hbox to3em{\hrulefill}\thinspace}
\providecommand{\MR}{\relax\ifhmode\unskip\space\fi MR }
\providecommand{\MRhref}[2]{%
  \href{http://www.ams.org/mathscinet-getitem?mr=#1}{#2}
}
\providecommand{\href}[2]{#2}

\end{document}